\newtheorem{theorem}{Theorem}[section]
\newtheorem{lemma}[theorem]{Lemma}
\newtheorem{proposition}[theorem]{Proposition}
\newtheorem{corollary}[theorem]{Corollary}
\theoremstyle{definition}
\newtheorem{definition}[theorem]{Definition}
\theoremstyle{remark}
\newtheorem{remark}{Remark}
\newcommand{\R}{{\mathbb R}}
\newcommand{\Z}{{\mathbb Z}}
\newcommand{\T}{{\mathbb T}}
\newcommand{\N}{{\mathbb N}}
\newcommand{\C}{{\mathbb C}}
\newcommand{\F}{\mathcal{F}}
\newcommand{\supp}{\operatorname{supp}}
\numberwithin{equation}{section}
\def\1{\textbf{\rm 1}}
\def\XXint#1#2#3{{\setbox0=\hbox{$#1{#2#3}{\int}$}
\vcenter{\hbox{$#2#3$}}\kern-.5\wd0}}
\newcommand{\vertiii}[1]{{\left\vert\kern-0.25ex\left\vert\kern-0.25ex\left\vert #1 
\right\vert\kern-0.25ex\right\vert\kern-0.25ex\right\vert}}
\title[Shell decoupling and periodic Zakharov system]{
Decoupling inequality for paraboloid under shell type restriction and its application to the periodic Zakharov system}
\author[S.~Kinoshita]{Shinya Kinoshita}
\address[S.~Kinoshita]{Department of Mathematics, Tokyo Institute of Technology, Meguro-ku, Tokyo, 152-8551, Japan}
\email[S.~Kinoshita]{kinoshita@math.titech.ac.jp}
\author[S.~Nakamura]{Shohei Nakamura}
\address[S.~Nakamura]{Department of Mathematics, Graduate School of Science, Osaka University, Toyonaka, Osaka 560-0043, Japan}
\email[S.~Nakamura]{srmkn@math.sci.osaka-u.ac.jp}
\author[A.~Sanwal]{Akansha Sanwal}
\address[A.~Sanwal]{Institut f\"{u}r Mathematik, Universit\"{a}t Innsbruck, Technikerstrasse 13, 6020 Innsbruck, Austria.}
\email[A.~Sanwal]{akansha.sanwal@uibk.ac.at}
\subjclass[2010]{35Q55; 42B37}
\keywords{well-posedness, decoupling inequality,  discrete Fourier restriction, Zakharov system, Fourier restriction norm method.}
\begin{document}

\maketitle

\begin{abstract}
In this paper, we establish local well-posedness for the Zakharov system on $\mathbb{T}^d$, $d\ge3$ in a low regularity setting. Our result  improves the work of Kishimoto \cite{Kishi13}. Moreover, the result is sharp up to $\varepsilon$-loss of regularity when $d=3$ and $d\ge5$ as long as one utilizes the iteration argument. 
We introduce ideas from recent developments of the Fourier restriction theory. 
The key element in the proof of our well-posedness result is a new trilinear discrete Fourier restriction estimate involving paraboloid and cone. 
We prove this trilinear estimate by improving  Bourgain--Demeter's range of exponent for the linear decoupling inequality for paraboloid \cite{BourDem15} under the constraint that the input space-time function $f$ satisfies 
${\rm supp}\, \hat{f} 
\subset 
\{
(\xi,\tau) \in \mathbb{R}^{d+1}:
1- \frac1N \le |\xi| \le 1 + \frac1N,\; 
|\tau - |\xi|^2| \le \frac1{N^{2}}
\}
$ for large $N\ge1$. 
\end{abstract}

\section{Introduction}\label{sec:intro}
\subsection{Introduction and main result}
We consider the Cauchy problem for the Zakharov system:
\begin{equation}\label{Zakharov}
\begin{cases}
i \partial_t u  + \Delta u = nu , \\
\partial_t^2 n -\Delta n  = \Delta(|u|^2),\\
(u(0), n(0), \partial_t n(0)) = (u_0, n_0, n_1),
\end{cases}
\end{equation}
where the unknown functions $u$ and $n$ are complex- and real-valued functions, respectively. We consider \eqref{Zakharov} with periodic boundary condition, namely, $u:  \mathbb{R} \times \mathbb{T}^d :\to \C$ and $n:  \mathbb{R} \times \mathbb{T}^d: \to \R$ where $\T^d = (\R/2 \pi \Z)^d$. 
We take initial data from $L^2$-based Sobolev spaces:
\[
(u_0, n_0, n_1) \in  H^s(\T^d) \times H^{\ell}(\T^d) \times H^{\ell-1}(\T^d) =: \mathcal{H}^{s,\ell}(\T^d)
\]
with $s, \ell \in \R$.

The Zakharov system was derived by Zakharov \cite{Zakharov72} to describe the propagation of Langmuir waves in a plasma. 
The Zakharov system possesses two conservation laws, namely, the conservation of mass $\|u(t)\|_{L^2} = \|u_0\|_{L^2}$ and energy:
\[
E(u,n)(t) = \|\nabla u(t)\|_{L^2}^2 + \frac{1}{2} (\|n(t)\|_{L^2}^2 + \| |\nabla|^{-1} \partial_t n(t)\|_{L^2}^2) + 
\int_{\T^d} n(t,x) |u(t,x)|^2 dx.
\]
Here, if the initial velocity has zero mean, i.e. $\widehat{n_1} (0)=0$, then the energy is well-defined. See the introduction in \cite{KM13} for more details. 

The Cauchy problem for the Zakharov system on $\R \times \R^d$ has been studied extensively. 
In \cite{OT92}, Ozawa--Tsutsumi obtained the local well-posedness of \eqref{Zakharov} in $H^2(\R^d) \times H^1(\R^d) \times L^2(\R^d)$ for $d \leq 3$. 
After the seminal works by Bourgain \cite{BourGAFA93}, and Klainerman--Machedon \cite{KM93}, Bourgain--Colliander \cite{BC96} applied the Fourier restriction norm method to the study of \eqref{Zakharov} and obtained global well-posedness in $H^1(\R^d) \times L^2(\R^d) \times H^{-1}(\R^d)$ for $d =2$, $3$. 
Thereafter, Ginibre--Tsutsumi--Velo \cite{GTV97} extended the range of $(s,\ell)$ for which the Zakharov system is locally well-posed as follows:
\[
\begin{cases}
d=1, \qquad &-\frac12 \leq s-\ell \leq 1, \ 2 s \geq \ell + \frac12 \geq 0,\\
d=2,3, & \ell \leq s \leq \ell +1, \ \ell \geq 0, \ 2s-(\ell+1) \geq 0,\\
d \geq 4, & \ell \leq s \leq \ell +1, \ \ell > \frac{d}{2} - 2, \ 2 s -(\ell +1) > \frac{d}{2}-2.
\end{cases}
\]
They also introduced the notion of scaling criticality for \eqref{Zakharov} using which the scaling critical values are given by $(s,\ell) = (\frac{d-3}{2}, \frac{d-4}{2})$. 
In the $2$-dimensional case, on the line $\ell = s - \frac12$, Bejenaru--Herr--Holmer--Tataru \cite{BHHT09} pushed down the threshold of necessary regularity by $\frac12$ by proving that \eqref{Zakharov} is locally well-posed in $L^2(\R^2) \times H^{-\frac12}(\R^2) \times H^{-\frac32}(\R^2)$. 
They employed the nonlinear version of the Loomis--Whitney inequality which has a connection with multilinear restriction estimates, see e.g. \cite{BCW05, BCT, BHT10}. 
For $d=3$, Bejenaru--Herr \cite{BH11} proved local well-posedness in $H^s(\R^3) \times H^{\ell}(\R^3) \times H^{\ell-1}(\R^3)$ with $\ell > -\frac12$, $\ell \leq s \leq \ell +1$, $2 s  > \ell + \frac12$. 
In the energy critical dimension $d=4$, Bejenaru--Guo--Herr--Nakanishi \cite{BGHN} established the small data global well-posedness and scattering in the energy space $H^1(\R^4) \times L^2(\R^4) \times \dot{H}^{-1}(\R^4)$. Recently, under a radially symmetric assumption on data, global well-posedness and scattering in $4$D energy space below the ground state was shown by Guo--Nakanishi \cite{GN21}. Without the radial symmetry assumption, global well-posedness in $4$D energy space below the ground state has been established in \cite{CHN21}. However, scattering below ground state in the non-radial energy space remains a challenging open problem. 
In the recent paper \cite{CHN}, the sharp range of $(s, \ell)$ for well-posedness is determined for spatial dimensions $d \geq 4$. 
For $d\leq 3$, we also refer to the recent results \cite{CW21} and \cite{San22} for the almost optimal range of regularity for the local well-posedness of \eqref{Zakharov}.\\

On the other hand, there are not so many works on the periodic Zakharov system \eqref{Zakharov}. 
For $\gamma >0$, let $\T_{\gamma} = (\R / 2 \pi \gamma \N)$. 
For $d=1$, it is known that the necessary regularity for the well-posedness depends on the period of the torus. 
Bourgain \cite{Bour94} and Takaoka \cite{Taka99} proved that \eqref{Zakharov} is locally well-posed in $L^2(\T_{\gamma}) \times H^{-\frac{1}{2}}(\T_{\gamma}) \times H^{-\frac32}(\T_{\gamma})$ if $\gamma \notin \N$ and $H^{\frac12}(\T_{\gamma}) \times L^2(\T_{\gamma}) \times H^{-1}(\T_{\gamma})$ if $\gamma \in \N$. 
Notice that their results, in some sense, are sharp. This is due to the fact that the problematic nonlinear interaction arises only in the case $\gamma \in \N$. 
For $d=1$, we also note that global well-posedness of \eqref{Zakharov} in the energy space is known \cite{Bour94}. 

The most recent result on the local well-posedness of \eqref{Zakharov} on the multidimensional torus is due to Kishimoto~\cite{Kishi13}. 
He proved the following:
\begin{theorem}[Theorem 1.1 in \cite{Kishi13}]\label{theorem:Kishimoto13}
Let
\[
\begin{cases}
0 \leq s -\ell \leq 1, \ 2 s \geq \ell + \frac{d}{2} > d-1, & (d \geq 3),\\
0 \leq s -\ell \leq 1, \ 2 s \geq \ell + \frac{d}{2} \geq 1, & (d =2).
\end{cases}
\]
Then \eqref{Zakharov} is locally well-posed in $\mathcal{H}^{s,\ell} (\T^d)$.
\end{theorem}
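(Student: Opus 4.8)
The plan is to run the Fourier restriction norm method of Bourgain in $X^{s,b}$ spaces together with a contraction mapping argument on a short time interval. First I would diagonalise the wave part: introducing $n_\pm := n \mp i|\nabla|^{-1}\partial_t n$ (with the zero Fourier mode of $n$, which simply evolves linearly in $t$, treated separately), the system \eqref{Zakharov} becomes
\[
(i\partial_t + \Delta) u = \tfrac12 (n_+ + n_-)\,u, \qquad (i\partial_t \pm |\nabla|)\,n_\pm = -|\nabla|\big(|u|^2\big),
\]
with data $u(0)=u_0$ and $n_\pm(0) = n_0 \mp i|\nabla|^{-1} n_1$. One then sets up $X^{s,b}_S$ adapted to the Schr\"odinger flow $e^{it\Delta}$ (characteristic surface $\tau = |\xi|^2$, the paraboloid) for $u$, and $X^{\ell,b}_{W_\pm}$ adapted to the half-wave flows $e^{\mp it|\nabla|}$ (characteristic surfaces $\tau = \mp|\xi|$, the cone) for $n_\pm$, with $b$ slightly larger than $\tfrac12$. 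After localising to $[-T,T]$ by a smooth time cutoff, the Duhamel map is a contraction on a ball in these spaces once $T$ is small, the smallness coming from the usual $T^{\theta}$ gain, $\theta>0$, provided the nonlinear estimates below hold.

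The heart of the matter is the pair of multilinear estimates
\[
\|nu\|_{X^{s,b-1}_S} \lesssim \|n\|_{X^{\ell,b}_{W_+}\cap X^{\ell,b}_{W_-}}\,\|u\|_{X^{s,b}_S}, \qquad
\big\||\nabla|(u_1\overline{u_2})\big\|_{X^{\ell-1,b-1}_{W_\pm}} \lesssim \|u_1\|_{X^{s,b}_S}\,\|u_2\|_{X^{s,b}_S}.
\]
I would decompose each into dyadic pieces indexed by the spatial frequencies $N_0,N_1,N_2$ of the three factors and their modulations $L_0,L_1,L_2$ (distances to the relevant characteristic surfaces), and exploit the resonance identities: for the Schr\"odinger$\,\times\,$wave$\,\to\,$Schr\"odinger interaction the sum of the three modulations controls a quantity that is large unless the frequencies are essentially collinear, and similarly the output of the $|u|^2$ term on the cone forces a lower bound on $\max(L_0,L_1,L_2)$ in terms of the frequencies. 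Summing in the modulations then reduces everything to purely spatial $L^4_{t,x}(\T\times\T^d)$-type bounds — the discrete Fourier restriction (Strichartz) estimates for the paraboloid and for the cone on $\T^d$ — which carry an $N^{\e}$ loss; this $\e$-loss, propagated through the frequency summation, is exactly why the hypotheses appear as $2s \ge \ell + \tfrac d2 > d-1$ rather than with a sharp inequality, while the admissible range $0 \le s-\ell \le 1$ is dictated by balancing the $|\nabla|$ gain in the second estimate against the $H^s$-to-$H^\ell$ transfer in the first.

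The main obstacle is the high-high-to-low frequency output in the wave nonlinearity $\Delta(|u|^2)$: there the two derivatives in front are not compensated by the (small) output frequency, and on the torus one cannot appeal to the sharp bilinear Strichartz estimate available on $\R^d$. One must instead pay for the derivatives with the modulation gain coming from the resonance relation and then close using the $L^4$ discrete restriction bound; the resulting exponent budget is tight and is precisely what produces the $d/2$-type threshold and obstructs reaching the scaling-critical line $(s,\ell)=(\tfrac{d-3}{2},\tfrac{d-4}{2})$. Once the two multilinear estimates are established, existence, uniqueness in the iteration space, and continuous dependence on the data in $\mathcal H^{s,\ell}(\T^d)$ follow from the standard fixed-point argument for \eqref{Zakharov} in $X^{s,b}$ spaces.
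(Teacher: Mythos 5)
The paper only cites this theorem (it is Theorem 1.1 of \cite{Kishi13}) and gives no proof, so the comparison is against Kishimoto's original argument. Your outline correctly identifies the overall framework — half-wave diagonalisation, $X^{s,b}$ spaces adapted to the paraboloid and the cone, dyadic decomposition, the two bilinear estimates, contraction on a short interval — and this is indeed the strategy in \cite{Kishi13}.

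Two substantive issues. First, your claim that the argument reduces to ``$L^4_{t,x}$ bounds which carry an $N^\varepsilon$ loss'' is false for $d\ge3$: the periodic Strichartz estimate $\|e^{it\Delta}P_N\phi\|_{L^p_{t,x}(\T^{d+1})}\lesssim_\varepsilon N^\varepsilon\|\phi\|_{L^2}$ holds with only $\varepsilon$-loss in the range $p\le\frac{2(d+2)}{d}$ (Theorem~\ref{theorem:StrichartzS-BD}), and $4>\frac{2(d+2)}{d}$ when $d\ge3$, so $L^4$ carries a genuine power loss of order $N^{\frac{d-2}{4}}$. It is this power loss, not an $\varepsilon$-loss, that produces the $d/2$-type threshold; attributing the strict inequality $\ell+\frac d2>d-1$ to an $N^\varepsilon$ defect misreads where the constraint comes from. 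Second, and more fundamentally, you describe the resonance as forcing ``essentially collinear'' frequencies, but the structural consequence that actually drives the proof on $\T^d$ is the \emph{shell reduction}: the algebraic sum of the three modulations equals, up to signs, $|k_1|^2-|k_2|^2\pm\langle k_3\rangle$, and when $N_3\ll N_1\sim N_2$ this forces $\bigl||k_1|-|k_2|\bigr|\lesssim (L_{\max}+N_3)/N_1$, confining the high frequencies to thin spherical shells. This is precisely the observation Kishimoto uses to reach $\alpha=\frac{d-2}{2}$ in the trilinear estimate~\eqref{e:Trilinear}, and it is the starting point that the present paper's Theorem~\ref{theorem:ShellDecoupling} and Proposition~\ref{proposition:KeyBilinearEst} then sharpen. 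Without making the shell reduction explicit and saying which discrete restriction bound you apply \emph{on the shell}, the step from ``resonance gives modulation gain'' to the claimed regularity range is a genuine gap in the sketch.
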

In particular, for the case $d=2$, local well-posedness in the energy space $H^1(\T^2) \times L^2(\T^2) \times H^{-1}(\T^2)$ was obtained. 
This readily implies the global well-posedness under the conditions $\widehat{n}_1(0)=0$ and $\|u_0\|_{L^2} \ll 1$. 
Moreover, the result remains true if the classical torus $\T^d$ is replaced by $\T_{\gamma}^d := (\R/2 \pi \gamma_1 \Z) \times \cdots \times (\R / 2 \pi \gamma_d \Z)$ for $\gamma = (\gamma_1, \ldots, \gamma_d)$ with $\gamma_j>0$ for each $j=1,\ldots, d$. 
In \cite{KM13}, Kishimoto--Maeda succeeded in removing the condition $\widehat{n}_1(0)=0$ and found that the smallness condition $\|u_0\|_{L^2} \ll 1$ can be replaced with $\|u_0\|_{L^2} \leq \|Q\|_{L^2}$. 
By $Q$, we denote the ground state solution of the focusing cubic nonlinear Schr\"odinger equation on $\R^2$, that is, $Q$ is the unique positive radial solution of
\[
- \Delta Q + Q - Q^3=0, \quad x \in \R^2.
\]

The scaling critical regularity $(s,\ell) = (\frac{d-3}{2}, \frac{d-4}{2})$ is on the line $\ell = s - \frac12$. 
Our aim is to relax the regularity threshold in Theorem \ref{theorem:Kishimoto13} for $d \geq 3$ on the line $\ell= s - \frac12$. 
We note that for $\ell = s - \frac12$, the condition in Theorem \ref{theorem:Kishimoto13} becomes $s > \frac{d-1}{2}$ when $d \geq 3$. 
We now state our main result.
\begin{theorem}\label{theorem:main}
Let $d \geq 3$. Define
\begin{equation}\label{assumption:regularity}
s_0 = 
\begin{cases}
\frac12 \quad & (d=3),\\
\frac34 & (d=4),\\
\frac{d-3}{2} & (d\geq 5).
\end{cases}
\end{equation}
Let $s > s_0$. Then~\eqref{Zakharov} is locally well-posed in $\mathcal{H}^{s,s-\frac12}(\T^d)$.
\end{theorem}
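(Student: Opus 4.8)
The plan is to run the Fourier restriction norm method (the $X^{s,b}$ / Bourgain space machinery) adapted to the two evolutions appearing in \eqref{Zakharov}: the Schr\"odinger part, whose symbol lives near the paraboloid $\tau = |\xi|^2$, and the wave part, whose symbol lives near the cone $\tau = \pm|\xi|$. After diagonalizing the second equation by setting $n_\pm = n \mp i|\nabla|^{-1}\partial_t n$ so that $(i\partial_t \mp |\nabla|) n_\pm = \pm|\nabla|(|u|^2)$, local well-posedness in $\mathcal{H}^{s,s-\frac12}$ reduces, by the standard contraction-mapping argument in suitable $X^{s,b}$-type spaces (with $b$ slightly above $\frac12$, or their $U^p/V^p$ refinements to absorb the $\varepsilon$-loss), to two multilinear estimates: a Schr\"odinger-type estimate controlling $\|n_\pm u\|$ in the dual space by $\|n_\pm\|_{X^{s-\frac12,b'}}\|u\|_{X^{s,b}}$, and a wave-type estimate controlling $\||\nabla|(u\bar u)\|$ by $\|u\|_{X^{s,b}}^2$. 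Both estimates, after dyadic decomposition in the frequencies of the three inputs/outputs and after using the modulation weights, are governed by a single trilinear object: an integral of the product of three functions whose Fourier supports lie on (neighborhoods of) a paraboloid, a paraboloid, and a cone respectively. This is precisely the trilinear discrete Fourier restriction estimate advertised in the abstract.

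The key step is therefore to prove that trilinear estimate, and here the new input is the shell-restricted decoupling inequality for the paraboloid. In the hardest frequency regime — high-high interaction in the two paraboloid inputs producing a comparable-or-lower output frequency on the cone — the two paraboloid pieces have frequencies of size $\sim N$ that must nearly cancel, which forces them onto a thin annular shell $\{1-\tfrac1N \le |\xi| \le 1+\tfrac1N\}$ of the rescaled paraboloid, with modulation $\lesssim N^{-2}$. Exactly on such a shell the excerpt's improved decoupling inequality gives a better $\ell^p$-decoupling exponent than Bourgain--Demeter's unrestricted range, and this improvement is what upgrades Kishimoto's threshold down to $s_0$. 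I would combine this decoupling bound for the paraboloid inputs with the sharp $L^2$-based (bilinear or Loomis--Whitney-type) bound for the cone factor, then sum the resulting dyadic pieces; the gain from the shell restriction must beat the loss from summation, which is where the precise form of $s_0$ in \eqref{assumption:regularity} comes from, including the anomalous value $\frac34$ in $d=4$ (where the clean $\frac{d-3}{2}$ would fall below $\frac12$ and the relevant decoupling exponent is not yet at the critical scaling). The low-dimensional cases $d=3$ and the even case $d=4$ need to be treated by hand, since the general $d\ge5$ argument is cleanest where the scaling-critical regularity is already nonnegative and above $\frac12$.

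The main obstacle I expect is the bookkeeping of the trilinear sum over all dyadic frequency and modulation configurations: one must check that in every regime — not just the resonant high-high-low one — either the shell-decoupling estimate, or a cheaper bilinear/Strichartz estimate on the paraboloid or the cone, closes with a power of $N$ to spare. In particular, the non-resonant regimes where the modulation of one factor is large should be handled by trading that modulation weight for derivatives (a routine but case-heavy argument), while the genuinely delicate case is when all three modulations are minimal and the geometry of two paraboloids and a cone transversally intersecting must be exploited; controlling the transversality uniformly, and verifying that the shell constraint is actually forced there, is the technical heart of the matter. A secondary obstacle is the sharpness claim: one must exhibit Knapp-type examples on the shell showing that the iteration argument cannot do better than $s_0$ up to $\varepsilon$ for $d=3$ and $d\ge5$, which amounts to checking that the shell-decoupling exponent used is itself sharp in that range.
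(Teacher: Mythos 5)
Your proposal matches the paper's strategy in its essentials: first-order reduction of \eqref{Zakharov}, contraction in $X^{s,b}_S\times X^{s-\frac12,b}_{W_+}$ with $b>\frac12$, duality reduction of the bilinear estimates to a trilinear form involving two paraboloid factors and one cone factor, shell restriction forced on both paraboloid factors in the resonant high-high interaction by the cancellation $\bigl||k-k'|^2-|k|^2+\tilde c\bigr|\lesssim N_3$, shell decoupling (hence the shell Strichartz estimate) on those factors, wave Strichartz on the cone factor, and trading large modulation for derivatives in the non-resonant regimes. Three small corrections: (i) the paper stays within the classical $X^{s,b}$ framework with $b$ slightly above $\frac12$ rather than introducing $U^p/V^p$ spaces; (ii) the cone factor is handled by the linear wave Strichartz estimate (Corollary~\ref{corollary:StrichartzW}) transferred to the torus by finite speed of propagation, not by a bilinear Loomis--Whitney-type bound; and (iii) your rationalization of the $d=4$ anomaly is off: for $d=4$ the scaling-critical value $\frac{d-3}{2}$ equals $\frac12$ (it does not fall below $\frac12$), and the reason $s_0=\frac34$ cannot be lowered is the unavailability of the mixed-norm endpoint Strichartz estimate $\|e^{it\Delta}\phi\|_{L_t^2L_x^4}\lesssim N^\varepsilon\|\phi\|_{L^2}$ on $\mathbb{T}^4$ (see the discussion in Subsection~\ref{S6.2}); the shell-decoupling itself is already at the optimal exponent $p=\frac{2(d+1)}{d-1}$ in all $d\ge2$.
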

\begin{remark}
Similar to Kishimoto's result, we can replace the classical torus $\T^d$ with $\T_{\gamma}^d$. 
\end{remark}

We observe that the regularity threshold \eqref{assumption:regularity} cannot be improved as far as we utilize the iteration argument in the cases $d=3$ and $d \geq 5$. 
Precisely, for $d \geq 2$ Kishimoto proved that if $s < \frac12$ then, for any $T>0$, the data-to-solution map $(u_0,n_0,n_1) \mapsto (u,n, \partial_t n)$
of \eqref{Zakharov}, as a map from the unit ball in
$\mathcal{H}^{s,s-\frac12}(\T^d)$ to
$C([0,T]; \mathcal{H}^{s,s-\frac12}(\T^d))$ fails to be $C^2$-differentiable at the origin, see Theorem 1.2 in \cite{Kishi13}. 
This suggests that Theorem \ref{theorem:main} in the case $d=3$ is optimal up to $\varepsilon$ loss. 

In addition, because $(s,\ell) = (\frac{d-3}{2}, \frac{d-4}{2})$ is the scaling critical regularity, the standard argument yields the sharpness of Theorem \eqref{theorem:main} for the case $d \geq 5$ up to $\varepsilon$ loss. 
\begin{theorem}\label{theorem:NotC^2}
Let $s < \frac{d-3}{2}$. Then, for any $T>0$, the data-to-solution map $(u_0,n_0,n_1) \mapsto (u,n, \partial_t n)$
of \eqref{Zakharov}, as a map from the unit ball in
$\mathcal{H}^{s,s-\frac12}(\T^d)$ to
$C([0,T]; \mathcal{H}^{s,s-\frac12}(\T^d))$ fails to be $C^2$-differentiable at the origin.
\end{theorem}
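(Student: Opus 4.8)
The plan is to run the standard scheme: if the flow map is $C^2$ near the origin, then its second derivative there coincides with twice the quadratic term of the Picard iteration, which is an explicit bilinear operator; I then defeat the boundedness of that operator by a resonant example whose scaling is governed by the critical regularity $(s,\ell)=(\frac{d-3}{2},\frac{d-4}{2})$. Concretely, diagonalise the wave part by $n_\pm = n\mp i|\nabla|^{-1}\partial_t n$, so that $n=\tfrac12(n_++n_-)$, $(i\partial_t\mp|\nabla|)n_\pm=\pm|\nabla|(|u|^2)$, and $u$ solves $i\partial_t u+\Delta u=\tfrac12(n_++n_-)u$. Assume, for contradiction, that $\Phi:(u_0,n_0,n_1)\mapsto(u,n,\partial_t n)$ is $C^2$ from a ball around $0$ in $\mathcal H^{s,s-\frac12}(\T^d)$ to $C([0,T];\mathcal H^{s,s-\frac12}(\T^d))$. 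Differentiating the Duhamel formula twice at the origin---legitimate because the solutions are unique in the space in which Theorem~\ref{theorem:main} produces them---identifies $D^2\Phi(0)$ with twice the quadratic Picard iterate. Perturbing the data by $\mathbf f=(f,g,0)$ and reading off the $u$-component, the $C^2$ bound $\|D^2\Phi(0)[\mathbf f^{\otimes2}]\|_{C_T\mathcal H^{s,s-\frac12}}\lesssim\|\mathbf f\|_{\mathcal H^{s,s-\frac12}}^2$ forces
\[
\Big\|\int_0^t e^{i(t-t')\Delta}\big[(e^{-it'|\nabla|}g)(e^{it'\Delta}f)\big]\,dt'\;+\;\mathcal R(g,f)\Big\|_{C([0,T];H^s)}\;\lesssim_T\;\|g\|_{H^{s-\frac12}}\,\|f\|_{H^s},
\]
where $\mathcal R(g,f)=\int_0^t e^{i(t-t')\Delta}[(e^{it'|\nabla|}g)(e^{it'\Delta}f)]\,dt'$ is the other branch of the wave propagation. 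It therefore suffices, for each $s<\frac{d-3}{2}$, to produce data $(f_M,g_M)$, $M\to\infty$, for which the first term has $H^s$-norm $\gg\|g_M\|_{H^{s-\frac12}}\|f_M\|_{H^s}$ while $\mathcal R(g_M,f_M)$ stays comparatively small. (For $d\le 4$ the hypothesis $s<\frac{d-3}{2}$ is already subsumed by Theorem~1.2 of \cite{Kishi13}; the construction below works for every $d\ge3$.)

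For the example: if $\widehat g$ is supported near frequency $\eta$ and $\widehat f$ near $\xi$, the $dt'$-integral in the first term carries the phase $|\eta+\xi|^2-|\xi|^2-|\eta|=|\eta|^2+2\,\eta\cdot\xi-|\eta|$, whose vanishing locus is the resonant quadric where the cone meets the difference set of two paraboloids. Fix a large $M$ and let $\widehat{g_M}$, $\widehat{f_M}$ be suitably normalised indicator functions of lattice balls of radius $\sim M$ inside $\{|\eta|\sim M\}$, respectively $\{|\xi|\sim M\}$, each restricted to an $O(1)$-neighbourhood of this resonant set; imposing $\|g_M\|_{H^{s-\frac12}}=\|f_M\|_{H^s}=1$ forces the amplitudes to be of size $\sim M^{\frac12-s-\frac d2}$ and $\sim M^{-s-\frac d2}$. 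On this data the phase relevant to $\mathcal R(g_M,f_M)$ equals $|\eta|^2+2\,\eta\cdot\xi+|\eta|=2|\eta|+O(1)\sim M$ on the whole support, so $\mathcal R(g_M,f_M)$ is $O(M^{-1})$-small and negligible. For the main term: on the resonant set the $dt'$-integral is $\gtrsim_T1$ without oscillation, and for each output frequency $\zeta$ with $|\zeta|\sim M$ the number of lattice pairs $(\eta,\xi)$ with $\eta+\xi=\zeta$ lying on the resonant quadric is $\sim M^{d-2}$, i.e.\ a lattice-point count on a sphere-like hypersurface of radius $\sim M$. Hence the $\zeta$-th Fourier coefficient of the main term has size $\sim M^{\frac12-s-\frac d2}\cdot M^{-s-\frac d2}\cdot M^{d-2}$ for $\sim M^{d}$ values of $\zeta$, and taking the $H^s$ norm gives
\[
\Big\|\int_0^t e^{i(t-t')\Delta}\big[(e^{-it'|\nabla|}g_M)(e^{it'\Delta}f_M)\big]\,dt'\Big\|_{H^s}\;\gtrsim_T\;M^{\frac{d-3}{2}-s},
\]
which tends to $\infty$ as $M\to\infty$ precisely when $s<\frac{d-3}{2}$. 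This is exactly the Euclidean parabolic-scaling heuristic identifying $(s,\ell)=(\frac{d-3}{2},\frac{d-4}{2})$ as critical, transplanted to $\T^d$ by concentrating at frequency $\sim M$; it contradicts the displayed $C^2$ bound and completes the proof.

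The main obstacle is making the last lower bound rigorous, and it has two parts. First, the count $\sim M^{d-2}$ of lattice points on the resonant quadric: the quadric is genuinely curved at scale $M$, so this is a lattice-points-on-spheres type estimate; for $d\ge5$ it holds with matching upper and lower bounds (the singular series being bounded), whereas for $d=4$ one must, and can, restrict to a sequence of scales $M$ along which the count does not degenerate---this is precisely the point where the gain over Kishimoto's threshold deteriorates, which is why $s_0$ jumps at $d=4$ in Theorem~\ref{theorem:main}. Second, one must verify that the genuinely resonant part of the double sum $\sum_{\eta+\xi=\zeta}$ is not cancelled by the remaining near-resonant part: grouping pairs by the dyadic size $K\gtrsim1$ of the phase, such a block has multiplicity $\sim KM^{d-2}$ per output but carries a time integral of size $\sim K^{-1}$ with phases spread over a range $\sim K$, so a square-function (or $TT^*$) bound controls its contribution by $\sim M^{-(d-2)/2}$ times the main term, hence by $o(1)$ of it; it is here that one may lose an arbitrarily small power $M^{\varepsilon}$, consistent with the $\varepsilon$-loss in Theorem~\ref{theorem:main}. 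By contrast, the reduction in the first paragraph---promoting ``$\Phi$ is $C^2$'' to ``$D^2\Phi(0)=2\times(\text{second Picard iterate})$'' by uniqueness---is routine and uses nothing beyond the well-posedness already established.
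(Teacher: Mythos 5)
Your overall strategy---identify $D^2\Phi(0)$ with the second Picard iterate via uniqueness, then defeat the resulting bilinear bound with a frequency-concentrated example whose scaling forces the exponent $M^{\frac{d-3}{2}-s}$---is the right one, and your reduction to the displayed bilinear estimate is essentially the same as the paper's. But the route you take to the lower bound is much more complicated than necessary, and it contains genuine gaps.

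The key idea you missed is that one should \emph{shrink the time interval} rather than hunt for resonances. The paper follows Bourgain's argument and takes $f_N = N^{-s-\frac d2}\sum_{|k|\sim N}e^{ik\cdot x}$, $g_N = N^{-(s-\frac12)-\frac d2}\sum_{|k|\sim N}\cos(k\cdot x)$, and then restricts the supremum in time to $0\le t\le \frac{1}{100 N^2}$. On this tiny window every phase $(t-t')|k|^2$, $t'|k-k'|^2$, $t'|k'|$ is $O(1)$ with real part $\ge\frac12$, so there is \emph{no oscillation whatsoever} in the $t'$-integral. The Fourier coefficient at an output frequency $|k|\sim N$ is then simply bounded below by $t\cdot N^{-2s-d+\frac12}\cdot\#\{k':|k'|\sim N,\ |k-k'|\sim N\}$, and that count is the \emph{trivial} one, $\sim N^d$ (a full $d$-dimensional ball of $k'$, no quadric constraint). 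Taking $t\sim N^{-2}$ costs a factor $N^{-2}$, which exactly trades off against the count, producing $\|\,\cdot\,\|_{H^s}\gtrsim N^{-s+\frac{d-3}{2}}$ for every $d\ge3$ at once, with a one-line calculation.

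By contrast, your resonance-based route runs into several problems you have not resolved.  (i) The resonant condition $|\eta|^2+2\eta\cdot\xi-|\eta|=0$ is a \emph{joint} constraint on $(\eta,\xi)$; it does not make sense to restrict $\widehat{g_M}(\eta)$ and $\widehat{f_M}(\xi)$ separately to a neighbourhood of it, and the normalising amplitudes you wrote down correspond to unrestricted balls, not restricted ones.  (ii) After fixing $\zeta=\eta+\xi$, the resonant set becomes $|\eta|^2+|\eta|-2\eta\cdot\zeta=O(1)$, which is an $O(1)$-thickened sphere of radius $\sim M$; the claimed count $\sim M^{d-2}$ of lattice points on it is a genuinely number-theoretic input, uniformly valid only for $d\ge5$, dubious at $d=4$, and false for many radii at $d=3$. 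You flag this yourself and fall back on Kishimoto's Theorem 1.2 for $d\le4$, which means your argument as stated does not actually prove the theorem for $d=3,4$ without that external input---whereas the paper's proof is self-contained for all $d\ge3$.  (iii) The assertion that the near-resonant blocks are controlled by a square-function or $TT^*$ bound, and in particular that they cannot cancel the resonant main term in the \emph{lower} bound, is the crux of your estimate and is left entirely unjustified; in ill-posedness arguments this is exactly the kind of step that cannot be waved away.  Restricting to $t\lesssim N^{-2}$, as the paper does, makes all three issues disappear simultaneously: there is no resonant/non-resonant split, no quadric, and no cancellation to rule out.
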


In order to establish Theorem \ref{theorem:main}, we bring new insight from recent development in the Fourier restriction theory which is so called \textit{decoupling theory} (Wolff's inequality) into the study of the periodic Zakharov system.  We refer to books \cite{Stein,SoggeBook} for a general reference to the Fourier restriction theory and its connection to PDEs. 
Whilst there are several works that directly apply the consequence itself from the decoupling theory to the study of nonlinear PDE problems, as far as we are aware, there are only a few works that apply modern techniques or ideas in the proof of the decoupling theory. 
As an example, we refer to the work by Fan--Staffilani--Wang--Wilson \cite{FSWW} where they obtained global well-posedness for the cubic defocusing nonlinear Schr\"{o}dinger equation on irrational tori using the bilinear decoupling type estimate. 
Similarly, our broad scope in this paper is to tackle nonlinear PDE problems by introducing new techniques and ideas developed in Fourier restriction theory. 
At the same time, the study of the periodic Zakharov system raises a question on the decoupling theory - what if one has more than two surfaces interacting with each other which is reminiscent of the bilinear and multilinear Fourier restriction theory, \cite{Ben04,Ben05,BCT,BourGuth11,KM93,TVV}. See also the book \cite{Dem} and references therein. 

\subsection{Introduction to $\ell^2$-decoupling theory}
In order to make the paper accessible to readers from different backgrounds, we begin with a brief introduction to the $\ell^2$-decoupling theory.  We refer to the survey paper \cite{GuthSurvey} and the book \cite{Dem} for more details. 
Our starting point for the introduction of the decoupling inequality is the classical Littlewood--Paley theory. 
We denote the dyadic annulus by $A_{1} := \{ (\xi,\tau) \in \mathbb{R}^{d+1}: |(\xi,\tau)| \lesssim 1 \}$ and $A_N := \{ (\xi,\tau) \in \mathbb{R}^{d+1}: |(\xi,\tau)| \sim N  \}$ for each $N \in 2^{\mathbb{N}}$. Note that $\{ A_N \}_{N \in 2^{\mathbb{N}_0}}$  provides an almost pairwise disjoint decomposition of $\mathbb{R}^{d+1}$. 
Similarly, we can decompose the Fourier support of sufficiently nice function $f:\mathbb{R}^{d+1}\to \mathbb{C}$ dyadically as $f \sim \sum_N f_{A_N}$ where we define
$$
f_{K}(x,t)
:= 
\int_{\mathbb{R}^{d+1}} e^{2\pi i(x,t)\cdot (\xi,\tau)} \chi_{K}(\xi,\tau) \hat{f}(\xi,\tau)\, d\xi d\tau
$$ 
for $K \subset \mathbb{R}^{d+1}$ in general, $\chi_K$ is a smooth cut-off function on $K$, and $\hat{f}$ denotes the space-time Fourier transform. 
Then the Littlewood--Paley theory exploits an $L^2$-orthogonality of $\{f_{A_N} \}_{ N\in 2^{\mathbb{N}_0} }$ and ensures that\footnote{We will frequently use the notation $A\lesssim B$ to mean $A\le C B$ for some constant $C>0$.  When we emphasize the dependence of a parameter $a$ in the inequality, we write $A\lesssim_a B$ which means $A\le C_a B$ for some constant $C_{a}$ depending on $a$. } 
\begin{equation}\label{e:ClassLP}
\big\|
\big(
\sum_{N} 
|f_{A_N}|^2
\big)^\frac12
\big\|_{L^p(\mathbb{R}^{d+1})}
\lesssim
\| f \|_{ L^p(\mathbb{R}^{d+1}) }
\lesssim
\big\|
\big(
\sum_{N} 
|f_{A_N}|^2
\big)^\frac12
\big\|_{L^p(\mathbb{R}^{d+1})}
\end{equation}
for all $1<p<\infty$. 
At this stage, one may wonder what if the dyadic annuli $\{A_N\}_N$ are replaced by other decomposition. 
Such a question has been addressed by Carleson \cite{Carleson}, C\'{o}rdoba \cite{Cordo}, and Rubio de Francia \cite{Rubio}. 
For instance, Rubio de Francia \cite[Theorem B]{Rubio} proved that the inequality on the left-hand side of \eqref{e:ClassLP} remains true for decomposition into congruent cubes. 
More precisely, the following inequality
\begin{equation}\label{e:Rubio}
\big\| \big( \sum_j |f_{Q_j}|^2 \big)^\frac12 \big\|_{L^p(\mathbb{R}^{d+1})}
\lesssim 
\| f \|_{L^p(\mathbb{R}^{d+1})}
\end{equation}
holds true as long as $2\le p < \infty$. Here, $\{Q_j\}_j$ is a family of translated congruent cubes with arbitrary fixed side-length. 
On the other hand, the reverse inequality of \eqref{e:Rubio} failed dramatically except for the case $p=2$.  
Regardless of such failure in general, Fefferman \cite{Feff} found out that the curvature of the Fourier support plays a role in reversing \eqref{e:Rubio}.  
To be more precise, we introduce some notation which we will use throughout this paper. Denote a truncated paraboloid by 
$$
\mathbb{P}^d:= \{ ( \xi, |\xi|^2) \in \mathbb{R}^{d+1}: |\xi|\le 1 \},
$$
and its $N^{-2}$-neighborhood by 
$$
\mathcal{N}_{N^{-2}}( \mathbb{P}^d )
:= 
\Bigl\{
(\xi,\tau) \in \mathbb{R}^{d+1}:
|\xi|\le1,\; 
|\tau - |\xi|^2| \le \frac1{N^{2}}
\Bigr\}
$$
for $d\ge1$ and $N \gg 1$.  
We will decompose $\mathbb{P}^d$ or $\mathcal{N}_{N^{-2}}( \mathbb{P}^d )$ into small caps. For each $N\ge1$, let $\mathcal{C}_{N^{-1}}$ be a family of disjoint $\frac1N \times \cdots \times \frac1N \times \frac{1}{N^2}$ caps of the form 
\begin{equation}\label{e:DefCap}
\theta = 
\Bigl\{ 
( \xi,\tau) : \xi \in [- \frac1{2N}, \frac1{2N}]^d + c_\theta,\; |\tau - |\xi|^2 | \le \frac1{N^2} 
\Bigr\},
\end{equation}
where $c_\theta$ runs over $\frac1N\mathbb{Z}^d \cap [-1,1]^d$. 
With this notation, Fefferman's inequality \cite{Feff} states that if $f:\mathbb{R}^{d+1} \to \mathbb{C}$ satisfies ${\rm supp}\, \hat{f} \subset \mathcal{N}_{N^{-2}}(\mathbb{P}^d)$, then 
\begin{equation}\label{e:SquareConj}
\| f \|_{L^p(\mathbb{R}^{d+1})} \lesssim \big\| \big( \sum_{\theta \in \mathcal{C}_{N^{-1}}} |f_\theta|^2 \big)^\frac12 \big\|_{L^p(\mathbb{R}^{d+1})}
\end{equation}
as long as $2\le p \le 4$ and $d=1$. 
For $d\ge2$, \eqref{e:SquareConj} is conjectured to be true as long as $2\le p \le \frac{2(d+1)}{d}$. One can easily check \eqref{e:SquareConj} for $p=2$ by Plancherel's theorem, hence the difficulty is to prove \eqref{e:SquareConj} for large $p$.  
This problem is open for all $d\ge2$ and we refer to \cite{GWZ,Lee16,LV,SoggeBook} for the recent progress in this problem and relation to the Fourier restriction theory. 
The $\ell^2$-decoupling inequality we are concerned with in this paper is of the form\footnote{Precisely speaking \eqref{e:DecoupFat} means that for any $\varepsilon>0$ there exists some finite constant $C_\varepsilon$ such that 
$$
\| f \|_{L^p(\mathbb{R}^{d+1})} \le C_\varepsilon N^\varepsilon  \big( \sum_{\theta \in \mathcal{C}_{N^{-1}}} \|f_\theta\|_{L^p(\mathbb{R}^{d+1})}^2 \big)^\frac12 
$$
holds for all $N\ge1$ and $f$ such that ${\rm supp}\, \hat{f} \subset \mathcal{N}_{N^{-2}}(\mathbb{P}^d)$.} 
\begin{equation}\label{e:DecoupFat}
\| f \|_{L^p(\mathbb{R}^{d+1})} \lesssim_\varepsilon N^\varepsilon  \big( \sum_{\theta \in \mathcal{C}_{N^{-1}}} \|f_\theta\|_{L^p(\mathbb{R}^{d+1})}^2 \big)^\frac12 
\end{equation}
for any small $\varepsilon>0$ and any $f$ such that ${\rm supp}\, \hat{f} \subset \mathcal{N}_{N^{-2}}(\mathbb{P}^d)$. 
Thanks to Minkowski's inequality, \eqref{e:DecoupFat} is clearly weaker than \eqref{e:SquareConj} if $p \ge 2$ and hence one expects a wider range of $p$ for which \eqref{e:DecoupFat} is true. 
Inequalities of this type were initiated by Wolff in his work on the local smoothing conjecture for the wave equation  \cite{Wolff00} where he introduced $\ell^p$-decoupling inequality for the cone. 
So, inequalities of this type are also called \textit{Wolff's inequality}. 
Wolff's work was further investigated by \cite{Bour13,GarSee,GarSee10,LW02,LP06,PraSee07} and the almost sharp result was established by Bourgain--Demeter \cite{BourDem15}. 

\begin{theorem}[Theorem 1.1 in \cite{BourDem15}]\label{t:BD15}
Let $d\ge1$, $N \ge 1$ and $2\le p \le \frac{2(d+2)}d$. 
Then \eqref{e:DecoupFat} holds for any $\varepsilon>0$ if ${\rm supp}\, \hat{f} \subset \mathcal{N}_{N^{-2}}(\mathbb{P}^d)$.
Moreover, the range of $p$ is sharp in the sense that $\varepsilon$ in \eqref{e:DecoupFat} cannot be arbitrarily small if $p> \frac{2(d+2)}d$. 
\end{theorem}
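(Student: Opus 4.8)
The plan is to carry out the Bourgain--Demeter induction-on-scales scheme, proving everything by a simultaneous induction on the scale $N$. Write $D_p(N)$ for the smallest constant for which $\|f\|_{L^p(\mathbb R^{d+1})}\le D_p(N)\big(\sum_{\theta\in\mathcal C_{N^{-1}}}\|f_\theta\|_{L^p}^2\big)^{1/2}$ holds for all $f$ with $\supp\hat f\subset\mathcal N_{N^{-2}}(\mathbb P^d)$; the aim is $D_p(N)\lesssim_\varepsilon N^\varepsilon$. Since $D_2(N)\lesssim1$ by Plancherel and $\ell^2$-decoupling constants interpolate in $p$, it suffices to treat the critical exponent $p=p_c:=\frac{2(d+2)}d$, the range $2\le p<p_c$ then being automatic. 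Two tools are used throughout: \emph{parabolic rescaling}, i.e.\ the affine map sending the part of $\mathbb P^d$ over a $K^{-1}$-cube in the base onto all of $\mathbb P^d$, which converts decoupling of a single $K^{-1}$-cap into $N^{-1}$-caps into decoupling at scale $N/K$; and the reduction to functions localised to a ball $B_N$ of radius $\sim N$ (and $\sim N^2$ in time), on which each $f_\theta$ is morally constant along a dual tube.

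Step 1 (reduction to a multilinear inequality). Fix a large parameter $K$ and split $\mathbb P^d$ into $K^{-1}$-caps $\tau$. The Bourgain--Guth broad--narrow argument shows that, pointwise on $B_N$, $|f|^{p_c}$ is controlled, up to a constant depending only on $d$ and a fixed power of $K$, by $\sum_\tau|f_\tau|^{p_c}$ plus $\sum\prod_{i=1}^{d+1}|f_{\tau_i}|^{p_c/(d+1)}$ over tuples of $d+1$ caps whose normals are quantitatively linearly independent --- the point being the elementary dichotomy that either one $K^{-1}$-cap already dominates, or at least $d+1$ of them do and can be taken transverse. Integrating over $B_N$, applying parabolic rescaling and the inductive bound on $D_{p_c}$ at smaller scales to the first (``narrow'') sum, using $p_c/2\ge1$ to recombine, and iterating $O(\log N/\log K)$ times with $K$ chosen large in terms of $\varepsilon$, one reduces the theorem to the claim that the \emph{multilinear decoupling constant} $M_{p_c}(N)$, the best constant bounding $\big\|\prod_{i=1}^{d+1}|f_{\tau_i}|^{1/(d+1)}\big\|_{L^{p_c}(B_N)}$ by the corresponding product of per-cap $\ell^2$-square functions over transverse tuples, satisfies $M_{p_c}(N)\lesssim_\varepsilon N^\varepsilon$.

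Step 2 (the multilinear inequality). This is proven by its own induction on scales, with two ingredients. The first is the Bennett--Carbery--Tao multilinear Kakeya/restriction theorem, which handles $d+1$ transverse families of wave packets with only an $N^\varepsilon$-loss, but reaches only the exponent $\frac{2(d+1)}d<p_c$. The second is the ball-inflation lemma, whose exponent is dictated by multilinear Kakeya: it lets one pass from local $L^{p_c}$-averages of $\prod_{i=1}^{d+1}(\sum_{\theta\subset\tau_i}|f_\theta|^2)^{1/2}$ over balls of radius $\rho$ to averages over balls of radius $\rho^2$ with only an $N^\varepsilon$-loss. Interleaving ball inflation, parabolic rescaling, flat decoupling, and a pigeonholing of the wave-packet amplitudes across the $O(\log\log N)$ dyadic scales between $1$ and $N$, one gains exactly the integrability needed to climb from $\frac{2(d+1)}d$ up to $p_c$; the outcome is that $M_{p_c}$ reproduces itself at a smaller scale up to an acceptable factor, closing the induction and giving $M_{p_c}(N)\lesssim_\varepsilon N^\varepsilon$. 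With Step 1 this yields $D_{p_c}(N)\lesssim_\varepsilon N^\varepsilon$, hence \eqref{e:DecoupFat} for all $2\le p\le\frac{2(d+2)}d$. The hard part is precisely this step: one must arrange the bookkeeping so that the $\varepsilon$-losses accumulated over the many rounds of ball inflation and rescaling stay summable, and, more delicately, so that the gain supplied by multilinear Kakeya exactly matches the integrability deficit at the single exponent $p_c=\frac{2(d+2)}d$ --- it is this balance that both makes the scheme close and fixes the sharp range of $p$.

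Step 3 (sharpness). For $p>\frac{2(d+2)}d$, test \eqref{e:DecoupFat} on $f$ with $\hat f=\sum_{\theta\in\mathcal C_{N^{-1}}}\chi_\theta$. Each $f_\theta$ has size $\sim|\theta|\sim N^{-(d+2)}$ on its dual slab of volume $\sim N^{d+2}$, so $\|f_\theta\|_{L^p(\mathbb R^{d+1})}\sim N^{-(d+2)(1-1/p)}$, and as there are $\sim N^d$ caps the right-hand side of \eqref{e:DecoupFat} is $\sim_\varepsilon N^\varepsilon N^{\frac d2-(d+2)(1-1/p)}$. On the other hand $f(0)=\sum_\theta\int\chi_\theta\sim N^{-2}$ and, more precisely, $f(x,t)\approx N^{-2}\int_{|\xi|\le1}e^{2\pi i(x\cdot\xi+t|\xi|^2)}\,d\xi$, which has modulus $\gtrsim N^{-2}$ on a ball about the origin of radius independent of $N$, whence $\|f\|_{L^p(\mathbb R^{d+1})}\gtrsim N^{-2}$. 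Plugging both into \eqref{e:DecoupFat} forces $D_p(N)\gtrsim N^{\frac d2-\frac{d+2}p}$, whose exponent is strictly positive exactly when $p>\frac{2(d+2)}d$; hence $\varepsilon$ in \eqref{e:DecoupFat} cannot be taken arbitrarily small for such $p$, which is the asserted sharpness.
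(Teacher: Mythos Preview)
The paper does not prove Theorem~\ref{t:BD15}; it is stated purely as a citation of Theorem~1.1 in \cite{BourDem15} and used as a black box. Your proposal is a compressed but essentially faithful sketch of the Bourgain--Demeter proof itself: the broad--narrow reduction of Bourgain--Guth, the multilinear decoupling handled by Bennett--Carbery--Tao multilinear Kakeya combined with ball inflation and parabolic rescaling in an induction-on-scales scheme, and the standard all-caps example for sharpness. So there is nothing to compare against in this paper --- you are simply outlining the very proof the paper is citing, and your outline and sharpness computation are correct.
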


Among several consequences of Theorem \ref{t:BD15} to broad fields of mathematics, we only mention the discrete restriction for paraboloid or the Strichartz estimate on torus which was initiated by Bourgain \cite{BourGAFA93}, see also works by Burq--G\'{e}rard--Tzvetkov \cite{BGT}, Guo--Oh--Wang \cite{GOW} and Vega \cite{Vega}. 
We refer the reader to the survey paper \cite{Pie} for other consequences to analytic number theory and a comprehensive introduction to the theory, see also \cite{GLYZ21,GuthSurvey,Li}. 

\begin{theorem}[Theorem 2.4 in \cite{BourDem15}]\label{theorem:StrichartzS-BD}
Let $d \geq 1$, $N \ge1$ and $2 \le p \le \frac{2(d+2)}d$. Then
\begin{equation}\label{e:StrichartzS}
\|e^{it \Delta} \phi\|_{L^p_{t,x}( \mathbb{T}^{d+1})} \lesssim_\varepsilon  N^{\varepsilon}\| \phi\|_{L_x^2(\mathbb{T}^d)}
\end{equation}
holds for all $\varepsilon>0$ if ${\rm supp}\, \hat{\phi} \subset \{ k \in \mathbb{Z}^d: |k| \le N \} $. 
Moreover, the range of $p$ is sharp in the sense that $\varepsilon$ in \eqref{e:StrichartzS} cannot be arbitrary small if $p> \frac{2(d+2)}d$. 
\end{theorem}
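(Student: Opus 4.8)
The plan is to derive Theorem~\ref{theorem:StrichartzS-BD} from the continuous $\ell^2$-decoupling inequality of Theorem~\ref{t:BD15} by a standard transference (periodisation) argument, so the only substantial input is Theorem~\ref{t:BD15} itself. Writing $\phi=\sum_{k\in\mathbb{Z}^d,\,|k|\le N}a_k e^{2\pi i k\cdot x}$, the function $u:=e^{it\laplacian}\phi=\sum_{|k|\le N}a_k e^{2\pi i(k\cdot x-|k|^2 t)}$ is a trigonometric polynomial on $\mathbb{T}^{d+1}$ (since $|k|^2\in\mathbb{Z}$, $u$ is periodic in $t$ as well), and by Parseval on $\mathbb{T}^d$ it suffices to prove
\[
\|u\|_{L^p(\mathbb{T}^{d+1})}\lesssim_\varepsilon N^\varepsilon\Big(\sum_{|k|\le N}|a_k|^2\Big)^{1/2}.
\]
The first step is a parabolic rescaling. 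Set $\ti u(y,s):=u(y/N,-s/N^2)$ on $\R^{d+1}$; then $\ti u=\sum_{|k|\le N}a_k e^{2\pi i((k/N)\cdot y+|k/N|^2 s)}$, so its Fourier frequencies $(k/N,|k/N|^2)$ lie \emph{exactly} on $\mathbb{P}^d$, and — after harmlessly enlarging $N$ by a fixed constant so that the boundary caps stay inside $\{|\xi|\le 1\}$ — distinct $k$ with $|k|\le N$ fall into distinct caps of $\mathcal{C}_{N^{-1}}$. Moreover $\ti u$ is periodic with fundamental domain $\mathcal{Q}:=[0,N]^d\times[0,N^2]$, and a change of variables gives $\|\ti u\|_{L^p(\mathcal{Q})}^p=N^{d+2}\|u\|_{L^p(\mathbb{T}^{d+1})}^p$.

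To make $\ti u$ an admissible input for Theorem~\ref{t:BD15} I would localise it with a slowly varying cutoff. Fix a Schwartz function $\psi_0$ on $\R^{d+1}$ with $\ha{\psi_0}\ge 0$ supported in the unit ball and $\psi_0(0)=1$, fix a small absolute constant $\delta>0$, and put $\psi(z):=\psi_0(\delta z/N^2)$ and $F:=\psi\cdot\ti u$. Then $\ha F=\sum_{|k|\le N}a_k\,\ha\psi\big(\cdot-(k/N,|k/N|^2)\big)$; for $\delta$ small the $O(\delta N^{-2})$ fuzz around each frequency keeps $\supp\ha F\subset\mathcal{N}_{N^{-2}}(\mathbb{P}^d)$, and — the key bookkeeping point — each cap $\theta\in\mathcal{C}_{N^{-1}}$ contains exactly one of these frequencies, so $F_\theta(z)=a_{k(\theta)}e^{2\pi i(k(\theta)/N,|k(\theta)/N|^2)\cdot z}\psi(z)$ and $\|F_\theta\|_{L^p(\R^{d+1})}=|a_{k(\theta)}|\,\|\psi\|_{L^p(\R^{d+1})}$. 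Applying Theorem~\ref{t:BD15} to $F$ (its admissible range of $p$ is precisely $2\le p\le\frac{2(d+2)}d$) yields $\|F\|_{L^p(\R^{d+1})}\lesssim_\varepsilon N^\varepsilon\big(\sum_{|k|\le N}|a_k|^2\big)^{1/2}\|\psi\|_{L^p(\R^{d+1})}$.

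Finally I would run the periodisation in the reverse direction to recover the discrete norm. Since $\psi$ varies on scale $\sim\delta^{-1}N^2$, one has $|\psi|\ge\tfrac12$ on the cube $Q_R:=[-R,R]^{d+1}$ with $R\sim\delta^{-1}N^2$, and $Q_R$ contains $\gtrsim\delta^{-(d+1)}N^d$ disjoint translates of $\mathcal{Q}$; hence $\|F\|_{L^p(\R^{d+1})}^p\gtrsim\delta^{-(d+1)}N^d\cdot N^{d+2}\|u\|_{L^p(\mathbb{T}^{d+1})}^p$, while on the other hand $\|\psi\|_{L^p(\R^{d+1})}^p\sim(\delta^{-1}N^2)^{d+1}$. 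Inserting both bounds into the decoupling estimate, the factors $\delta^{-(d+1)}$ and $N^{2(d+1)}$ cancel and we are left with the desired $\|u\|_{L^p(\mathbb{T}^{d+1})}\lesssim_\varepsilon N^\varepsilon\big(\sum_{|k|\le N}|a_k|^2\big)^{1/2}$. For the $\varepsilon$-sharpness when $p>\frac{2(d+2)}d$ I would use the focusing example $\phi=\sum_{|k|\le N}e^{2\pi i k\cdot x}$: then $\|\phi\|_{L^2(\mathbb{T}^d)}\sim N^{d/2}$, whereas on the box $\{|x|\lesssim N^{-1}\}\times\{|t|\lesssim N^{-2}\}$ all phases of $u$ are $O(1)$, so $|u|\gtrsim N^d$ there and $\|u\|_{L^p(\mathbb{T}^{d+1})}\gtrsim N^{d-(d+2)/p}$; comparing with $N^\varepsilon\|\phi\|_{L^2}$ forces $\tfrac d2-\tfrac{d+2}p\le\varepsilon$, i.e. $p\le\frac{2(d+2)}d$ as $\varepsilon\to 0$. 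The only delicate point in the whole argument is the transference bookkeeping — checking that after rescaling each cap meets the frequency lattice in a single point, that the cutoff keeps the Fourier support inside $\mathcal{N}_{N^{-2}}(\mathbb{P}^d)$, and that the $\delta$- and $N$-powers produced by $\psi$ cancel exactly; the remaining steps are routine.
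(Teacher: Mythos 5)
Your derivation is correct and is the standard transference/periodisation argument of Bourgain--Demeter, which the paper cites for this theorem rather than proving it; it runs parallel to the paper's own proof of the analogous shell Strichartz estimate (Theorem~\ref{theorem:StrichartzShell}, Subsection~\ref{subsection3.5}). The only cosmetic difference is that the paper tests the local (weighted-ball) decoupling inequality against a Dirac-delta train $g=\sum_k a_k\,\delta_{k/N}$, whereas you apply the neighborhood decoupling of Theorem~\ref{t:BD15} to the cutoff multiple $\psi\cdot\widetilde{u}$ and cancel the $\psi$-normalisations at the end --- both are standard packagings of the same idea, and your bookkeeping of the $\delta$- and $N$-powers checks out.
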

If one writes $a_k = \hat{\phi}(k)$, then \eqref{e:StrichartzS} can be read as 
$$
\Big\| \sum_{k \in \mathbb{Z}^d: |k|\le N} a_k e^{i (x\cdot k + t|k|^2)} \Big\|_{L^p_{t,x}(\mathbb{T}^{d+1})}
\lesssim_\varepsilon N^\varepsilon 
\| a_k \|_{\ell^2}
$$
which is reminiscent of the discrete variant of Stein--Tomas Fourier restriction estimates. 

\subsection{Trilinear discrete Fourier restriction estimate and improvement of the $\ell^2$-decoupling inequality under the shell constraint}
In the study of the periodic Zakharov system, it is important to exploit the resonance phenomenon between Schr\"{o}dinger and wave equations. 
Through the $X^{s,b}$ analysis,  this can be capitalized by the following trilinear estimate:
\begin{equation}\label{e:Trilinear}
\Big|
\int_{[- \pi, \pi] \times \mathbb{T}^d} e^{ it\Delta } \phi_1 \overline{e^{it\Delta}\phi_2} e^{\pm i t |\nabla|} \phi_3\, dtdx
\Big|
\lesssim 
N^\alpha 
\prod_{j=1}^3 \| \phi_j \|_{L^2(\mathbb{T}^d)},
\end{equation}
for ${\rm supp}\, \hat{\phi_j} \subset \{ |\xi| \sim N\} \cap \mathbb{Z}^d$ and some $\alpha \ge0$. 
For instance, Kishimoto \cite[Proposition 3.6]{Kishi13} obtained his result Theorem \ref{theorem:Kishimoto13} by proving\footnote{Precisely speaking, he proved more general trilinear estimates for functions whose Fourier supports are in some neighborhoods of hypersurfaces. See forthcoming Proposition \ref{proposition:KeyBilinearEst}.} \eqref{e:Trilinear} with $\alpha = \frac{d-2}2$ for $d\ge3$.
It is worth remarking that one needs to establish trilinear estimates of more general form where linear solutions are replaced by functions whose Fourier supports are contained in some neighborhoods of hypersurfaces in order to apply it to the study of \eqref{Zakharov}, see also Lemma \ref{lemma:TransferencePrinciple} below. Nevertheless, we introduced the estimate in this special form as  it might have its own interest from the viewpoint of the Fourier restriction theory. 
In \cite{Kishi13}, Kishimoto expected that the exponent $\alpha = \frac{d-2}2$ might not be sharp and it would be improved by involving some number theoretical argument. 
Hence, it is natural to see what can be said about \eqref{e:Trilinear} by appealing to the decoupling theory, especially Theorem \ref{theorem:StrichartzS-BD}. 
Let us demonstrate the case $d=3$. 
We can apply H\"{o}lder's inequality, \eqref{e:StrichartzS} with $p = \frac{10}{3}$ and the linear estimate for wave solutions (see \eqref{est:CorollaryWaveStrichartzT^d} below)
\[
\|e^{\pm it |\nabla |} \phi_3 \|_{L_{t}^{10}L_{x}^{\frac52} ([-\pi,\pi] \times \T^3)} \lesssim N^{\frac15} \|\phi_3\|_{L_{x}^2(\T^3)}
\] 
to see \eqref{e:Trilinear} with $ \alpha = \frac15 + \varepsilon$ as
\begin{align*}
&\big|
\int_{[-\pi,\pi] \times \mathbb{T}^{3}} e^{ it\Delta } \phi_1 \overline{e^{it\Delta}\phi_2} e^{\pm i t |\nabla|} \phi_3\, dtdx
\big|\\
&\le
\| e^{ it\Delta } \phi_1 \|_{L^{\frac{10}{3}}_{t,x}(\mathbb{T}^{3+1})} 
\| e^{ it\Delta } \phi_2 \|_{L^{\frac{10}{3}}_{t,x}(\mathbb{T}^{3+1})} 
\| e^{ \pm it|\nabla| } \phi_3 \|_{L_{t,x}^{\frac52} ([-\pi,\pi] \times \T^3)} \\
&\lesssim
\| e^{ it\Delta } \phi_1 \|_{L^{\frac{10}{3}}_{t,x}(\mathbb{T}^{3+1})} 
\| e^{ it\Delta } \phi_2 \|_{L^{\frac{10}{3}}_{t,x}(\mathbb{T}^{3+1})} 
\| e^{ \pm it|\nabla| } \phi_3 \|_{L_{t}^{10}L_{x}^{\frac52} ([-\pi,\pi] \times \T^3)} \\
&\lesssim 
N^{\frac15 + \varepsilon}
\prod_{j=1}^3 \| \phi_j \|_{L^2(\mathbb{T}^3)},
\end{align*}
which consequently, improves Kishimoto's result by virtue of Theorem \ref{theorem:StrichartzS-BD}\footnote{{An immediate consequence of \eqref{e:StrichartzS} is $\|e^{it \Delta} \phi\|_{L_t^q L_x^p(\T^{1+d})} \lesssim N^{\varepsilon}\|\phi\|_{L_x^2(\T^d)}$ where $\frac2q = d(\frac12 - \frac1p)$ and $2 \leq p \leq q$. 
If this estimate can be extended to the case $2 \leq q < p$ on the same lines as in the Euclidean case, we may show \eqref{e:Trilinear} with an optimal $\alpha$ up to $\varepsilon$ loss. However, as far as we are aware, the above estimate with $2 \leq q<p$ satisfying $\frac2q = d(\frac12 - \frac1p)$ remains an open problem. We refer the reader to Appendix~6.2.}
}. 
On the other hand, a simple example $\phi_j (\xi) = e^{i \xi_j^* \cdot x}$ where 
$$
\xi_1^* = (N,0,0),\;\;\;\xi_2^* = (-N-1, 0, 0),\;\;\; \xi_3^* = (-2 N-1,0,0), 
$$
shows that $\alpha \ge0$ is necessary for \eqref{e:Trilinear}. 
The aim of the paper is to fill this blank by exploiting an interaction between Schr\"{o}dinger and wave equation, or paraboloid and cone in the Fourier restriction language. 
Our first observation is that this interaction reduces the estimate \eqref{e:Trilinear} to the case when ${\rm supp}\, \hat{\phi_j}$, $j=1,2$, are contained in some shell. Namely we will see that it suffices to consider $\phi_1,\phi_2$ such that ${\rm supp}\, \hat{\phi_j} \subset \{ k \in \mathbb{Z}^d : c_j-1 \le |k| \le c_j + 1\}$ for some $c_j \sim N$, see the proof of Proposition \ref{proposition:KeyBilinearEst} for details of this reduction. We mention that Kishimoto \cite{Kishi13} also employed similar reduction to the Fourier support on some shell. 
By virtue of this reduction,  we are lead to the Strichartz estimate under the additional constraint of 
\begin{equation}\label{e:ShellStri}
{\rm supp}\, \hat{\phi} 
\subset 
\{ k \in \mathbb{Z}^d : c_*-1 \le |k| \le c_* + 1\}
\end{equation}
for some $c_* \sim N$, and expect some improvement of Theorem \ref{theorem:StrichartzS-BD}. 
Similarly, one can also ask some improvement of the decoupling inequality Theorem \ref{t:BD15} under the constraint 
\begin{equation}\label{e:ShellDecoup}
{\rm supp}\, \hat{f} 
\subset 
\Bigl\{
(\xi,\tau) \in \mathbb{R}^{d+1}:
d_*- \frac1N \le |\xi| \le d_* + \frac1N,\; 
|\tau - |\xi|^2| \le \frac1{N^{2}}
\Bigr\}
\end{equation}
for some $d_* \sim 1$. 
In fact, this turns out to be true and we give an almost sharp result.

\begin{theorem}\label{theorem:ShellDecoupling}
Let $d \geq 2$, $N \ge1$ and $2\le p \le \frac{2(d+1)}{d-1}$.
Then \eqref{e:DecoupFat} holds for all $\varepsilon>0$ and all $f$ satisfying  \eqref{e:ShellDecoup}. 
Moreover, the range of $p$ is sharp in the sense that $\varepsilon$ in \eqref{e:DecoupFat} cannot be arbitrarily small under the constraint \eqref{e:ShellDecoup} if $p > \frac{2(d+1)}{d-1}$. 
\end{theorem}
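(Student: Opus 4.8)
The plan is to reduce the shell-restricted decoupling for the paraboloid $\mathbb{P}^d$ in $\mathbb{R}^{d+1}$ to the full (unrestricted) decoupling for the paraboloid $\mathbb{P}^{d-1}$ in $\mathbb{R}^d$, exploiting the fact that the shell $\{d_* - \frac1N \le |\xi| \le d_* + \frac1N\}$, after rescaling, looks like a slab of thickness $\frac1N$ around a sphere of radius $\sim 1$, and that a sphere is itself a hypersurface with $d-1$ non-vanishing principal curvatures. First I would use rotational symmetry to reduce to a single cap-cluster: partition the shell into finitely overlapping sectors, each of which, after a rotation, is a graph over a $\frac1N$-neighbourhood of a coordinate hyperplane. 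In such a sector we introduce "polar-type" coordinates $\xi = (r\omega)$ with $r \in [d_* - \frac1N, d_* + \frac1N]$ and $\omega$ on a cap of the unit sphere $\mathbb{S}^{d-1}$; since $r$ ranges over an interval of length $\frac1N$ and $d_* \sim 1$, the set $\{(\xi, |\xi|^2)\}$ over this sector is contained in the $O(\frac1N)$-neighbourhood of the $d$-dimensional surface $\{(d_*\omega, d_*^2) : \omega \in \mathrm{cap}\}$, which is (a rescaled, tilted copy of) a piece of the sphere $d_*\mathbb{S}^{d-1}$ sitting in a hyperplane $\tau = \mathrm{const}$. The caps $\theta \in \mathcal{C}_{N^{-1}}$ intersecting this sector correspond, under this coordinate change, to the standard $\frac1N$-caps on $\mathbb{S}^{d-1}$.

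The second step is to apply the $\ell^2$-decoupling inequality for the sphere $\mathbb{S}^{d-1} \subset \mathbb{R}^d$ (equivalently, for the paraboloid $\mathbb{P}^{d-1}$, since the two are affinely equivalent at small scales and both satisfy Bourgain--Demeter's theorem with the same exponents), which by Theorem~\ref{t:BD15} applied in dimension $d-1$ gives \eqref{e:DecoupFat}-type decoupling into $\frac1N$-caps in $L^p(\mathbb{R}^d)$ for the sharp range $2 \le p \le \frac{2((d-1)+2)}{d-1} = \frac{2(d+1)}{d-1}$. The point is that the extra $\tau$-variable in $\mathbb{R}^{d+1}$ plays no role: on each sector the $\tau = |\xi|^2$ constraint, combined with $|r - d_*| \le \frac1N$, is essentially a fixed affine function of $\xi$ up to the allowed $\frac1{N^2}$ thickness, so Fubini in the $\tau$-direction (or a standard "cylinder over a lower-dimensional decoupling" argument, cf. the reduction of cone decoupling to paraboloid decoupling in \cite{BourDem15}) transfers the $\mathbb{R}^d$ decoupling to an $\mathbb{R}^{d+1}$ decoupling for functions supported in one sector of the shell. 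Summing over the $O(1)$ sectors (using the triangle inequality in $\ell^2$, with only a constant loss) then yields Theorem~\ref{theorem:ShellDecoupling} for the full shell.

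For the sharpness claim I would exhibit an extremiser: take $\hat{f}$ to be a sum of bumps, one on each cap $\theta$ in (a sector of) the shell, with random signs, and test the inequality. The shell contains $\sim N^{d-1}$ caps (rather than $\sim N^d$, the count for the full paraboloid), and these caps lie essentially on a $(d-1)$-dimensional sphere; the standard flat/bush examples from the sphere in $\mathbb{R}^d$ — a "flat" example where all wave packets align to give constructive interference on a dual slab, against a random-sign $L^2$ lower bound — show that \eqref{e:DecoupFat} must fail for $p > \frac{2(d+1)}{d-1}$, exactly as in the Bourgain--Demeter sharpness argument but with $d$ replaced by $d-1$.

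The main obstacle I anticipate is making the coordinate change in the first step genuinely precise: one must check that the $\frac1N \times \cdots \times \frac1N \times \frac1{N^2}$ caps $\theta$ of the paraboloid, when restricted to the shell, really do match (up to bounded overlap and acceptable distortion of the implied constants) the canonical $\frac1N$-caps of the lower-dimensional sphere, and that the $\frac1{N^2}$ vertical thickness is compatible with — indeed, finer than — what the sphere decoupling requires. In other words, the curvature of $\mathbb{P}^d$ in the radial direction is "used up" by the shell restriction and contributes nothing, while the curvature in the angular directions is what drives the estimate; quantifying this cleanly, and handling the boundary sectors where the polar coordinates degenerate by a harmless rotation/partition-of-unity, is the technical heart of the proof.
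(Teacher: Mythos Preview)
Your key insight is correct and matches the paper's: the shell constraint effectively kills one degree of freedom, so the relevant decoupling is the $(d-1)$--dimensional Bourgain--Demeter estimate, whose endpoint is exactly $\tfrac{2(d+1)}{d-1}$. The sharpness sketch is also fine in spirit. But the proposed \emph{direct} reduction via an $O(1)$ partition into sectors does not work, for a concrete geometric reason.

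The claim that each sector of the shell, after a rotation, ``is a graph over a $\tfrac1N$-neighbourhood of a coordinate hyperplane'' is false: over an aperture of size $\sim 1$ the sphere $d_*\mathbb{S}^{d-1}$ bends by $\sim 1$, not by $\tfrac1N$. Equivalently, the statement that on a sector ``$\tau=|\xi|^2$ is essentially a fixed affine function of $\xi$ up to $\tfrac1{N^2}$'' cannot hold, since $|\xi|^2$ is genuinely quadratic in $\xi$ over an $O(1)$ region. And if you instead try to invoke sphere decoupling in $\mathbb{R}^d$ directly, note that the $\xi$--support is only a $\tfrac1N$--neighbourhood of the sphere (the $\tfrac1{N^2}$ you see is the $\tau$--thickness, transverse to the paraboloid, not to the sphere); decoupling a $\tfrac1N$--neighbourhood of an elliptic hypersurface yields only $\tfrac1{\sqrt N}$--caps, not the $\tfrac1N$--caps you need. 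So there is a genuine scale mismatch that a single Fubini/cylinder step cannot bridge. The cone-to-paraboloid reduction you cite from \cite{BourDem15} is itself iterative, so invoking it as a black box hides the same issue.

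The paper closes this gap by an induction on scales: one proves $\mathcal{D}(K)\le C_\varepsilon K^\varepsilon\,\mathcal{D}(K^{1/2})$ for all $K\le N$ and then iterates. The point is that at scale $K$ the shell (width $\le\tfrac1K$) intersected with a $\tfrac1{\sqrt K}$--cap $\nu$ \emph{is} contained in a $\tfrac1K\times\tfrac1{\sqrt K}\times\cdots\times\tfrac1{\sqrt K}$ plate, because the sphere only bends by $(\tfrac1{\sqrt K})^2=\tfrac1K$ over a $\tfrac1{\sqrt K}$--aperture. On such a plate one can legitimately perform the Galilean shift, freeze the thin variable via a locally-constant/Taylor argument on physical slabs of width $K$, and apply the $(d-1)$--dimensional decoupling at the matching scale. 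Your picture becomes correct once the sectors have angular width $\tfrac1{\sqrt K}$ rather than $O(1)$ --- but then combining those $\sim K^{(d-1)/2}$ sectors is itself a decoupling step, which is precisely the role of $\mathcal{D}(K^{1/2})$ in the iteration.
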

Recall that Bourgain--Demeter's range in Theorem \ref{t:BD15} was $2\le p\le \frac{2(d+2)}d$. 
Hence our contribution in Theorem \ref{theorem:ShellDecoupling} is to establish \eqref{e:DecoupFat} on the range $\frac{2(d+2)}d < p\le \frac{2(d+1)}{d-1}$ under the assumption \eqref{e:ShellDecoup} and the sharpness of the range.
It is nowadays standard to derive the Strichartz estimate from the decoupling inequality, see \cite{Bour13}. 
We state our Strichartz estimate on the shell in a slightly general form with the aid of application to \eqref{Zakharov}.  
\begin{theorem}\label{theorem:StrichartzShell}
Let $d \geq 2$, $N_1$, $N_2 \in 2^{\N_0}$ satisfy $N_2 \leq N_1$ and $2 \le p \le \frac{2(d+1)}{d-1}$. 
Then 
\begin{equation}\label{e:ShellStrichartz}
\|e^{it \Delta} \phi\|_{L^p_{t,x}( \mathbb{T}^{d+1})} \lesssim_\varepsilon  N_2^{\varepsilon}\| \phi\|_{L_x^2(\mathbb{T}^d)}
\end{equation}
holds for all $\varepsilon>0$ and all $\phi \in L^2(\mathbb{T}^d)$ satisfying 
\begin{equation}\label{e:AssumpStri}
{\rm supp}\, \hat{\phi} 
\subset 
\{ k \in \mathbb{Z}^d : c_*-1 \le |k| \le c_* + 1\} \cap B_{N_2}
\end{equation}
for some $c_* \sim N_1$ and $B_{N_2} \subset \R^d$, a ball of radius $N_2$ with arbitrary centre.  
\end{theorem}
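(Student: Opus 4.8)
The plan is to deduce Theorem~\ref{theorem:StrichartzShell} from the shell decoupling inequality Theorem~\ref{theorem:ShellDecoupling} by the now-standard transference from the decoupling inequality on $\R^{d+1}$ to a discrete Strichartz estimate on $\T^{d+1}$, following the scheme in \cite{Bour13} (see also \cite{Dem}). First I would write $\phi = \sum_k a_k e^{ik\cdot x}$ with $a_k = \hat\phi(k)$, so that $e^{it\Delta}\phi = \sum_k a_k e^{i(k\cdot x + t|k|^2)}$, and I want to bound its $L^p_{t,x}(\T^{d+1})$ norm by $N_2^\e \|a_k\|_{\ell^2}$. The summation runs over $k \in \Z^d$ with $c_* - 1 \le |k| \le c_* + 1$ and $k \in B_{N_2}$; the key structural point is that after a harmless translation of the frequency ball we may assume $B_{N_2}$ is centred at the origin, and then the phase $k \cdot x + t|k|^2$, as $(\xi,\tau)$ ranges over the relevant frequency set, lives in the shell region \eqref{e:ShellDecoup} at \emph{scale $N_2$} rather than $N_1$: indeed $|k| \in [c_*-1, c_*+1]$ with $c_* \sim N_1$ means that, after rescaling by $c_*$, the normalized frequencies $\xi/c_*$ lie in an annulus of thickness $\sim 1/c_* \lesssim 1/N_2$ around $|\xi/c_*| = d_*$ for $d_* = 1$, and this is exactly the hypothesis \eqref{e:ShellDecoup} with the decoupling parameter taken to be $N_2$. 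This is why the gain is measured by $N_2$ and not $N_1$.

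Second, I would run the standard periodization argument. Fix a Schwartz function $\psi$ on $\R^{d+1}$ with $\hat\psi$ a smooth bump equal to $1$ on a neighbourhood of $[-1,1]^{d+1}$ and supported in $[-2,2]^{d+1}$, and set $F(x,t) = \sum_k a_k \, e^{2\pi i (\xi_k, \tau_k)\cdot(x,t)} \widehat{\psi_\delta}(\cdot)$-type extension, more precisely take $F$ with $\hat F$ supported in $\mathcal N_{N_2^{-2}}(\mathbb P^d)$ (after the parabolic rescaling by $c_*$ that turns $\Z^d$-lattice points into a $1/N_2$-separated set inside a unit ball, with $\tau$-thickness $N_2^{-2}$) and $\hat F$ a sum of bumps of width $\sim N_2^{-1}$ in $\xi$ and $\sim N_2^{-2}$ in $\tau$ centred at the rescaled lattice points, with weights $a_k$. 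Each cap $\theta \in \mathcal C_{N_2^{-1}}$ then contains exactly one (rescaled) lattice point, so $\|F_\theta\|_{L^p} \sim |a_k| \cdot (\text{fixed normalization})$ and $\sum_\theta \|F_\theta\|_{L^p}^2 \sim \|a_k\|_{\ell^2}^2$ up to the normalization. Applying Theorem~\ref{theorem:ShellDecoupling} to $F$ on a unit cube (or on all of $\R^{d+1}$ after the usual localization to the dual box of size $N_2^2$), and then comparing the continuous $L^p(\R^{d+1})$ integral over such a box with the periodic $L^p(\T^{d+1})$ integral of $e^{it\Delta}\phi$ via the uncertainty principle / a standard comparison lemma (the Fourier support being $N_2^{-1}$-spread, the function is essentially constant on dual boxes, so averaging the $\R$-integral over translates recovers the $\T$-integral), I obtain \eqref{e:ShellStrichartz}.

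The main obstacle, and the only place requiring care, is the bookkeeping in the rescaling step: the shell hypothesis \eqref{e:AssumpStri} mixes two scales, $N_1$ (the radius of the shell) and $N_2$ (the radius of the ball restricting the angular extent), and one must verify that after parabolic rescaling by $c_* \sim N_1$ the Fourier data genuinely falls under the hypothesis \eqref{e:ShellDecoup} of Theorem~\ref{theorem:ShellDecoupling} with the decoupling scale equal to $N_2$ (not $N_1$), so that the $N^\e$ loss in the decoupling inequality becomes the claimed $N_2^\e$. Concretely: $k \in B_{N_2}(z)$ for some centre $z$ with $|z| \sim N_1$; writing $k = z + m$ with $|m| \le N_2$ and dividing by $c_*$, the points $\xi = k/c_*$ lie within $O(N_2/N_1)$ of a fixed unit vector and within $O(1/N_1) = O(1/N_2)\cdot O(N_2/N_1)$ of the unit sphere, which after a further rotation/rescaling to normalize the ball to the unit ball places them inside $\mathcal N_{N_2^{-2}}(\mathbb P^d)$ intersected with the shell $\{\,|\,|\xi| - d_*\,| \le 1/N_2\,\}$, with the caps of $\mathcal C_{N_2^{-1}}$ each catching one lattice image — and crucially the angular restriction $|m|\le N_2$ ensures the caps remain genuine $N_2^{-1}$-caps rather than degenerating. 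Once this scaling dictionary is set up correctly, the rest is the routine periodization recalled above, and the sharpness statement is not asserted here so no lower bound is needed. I would also note that the restriction $p \le \frac{2(d+1)}{d-1}$ is inherited verbatim from Theorem~\ref{theorem:ShellDecoupling}, and that the case $p = 2$ is trivial by Plancherel (so interpolation is not even required, though one could phrase the whole argument for the endpoint $p = \frac{2(d+1)}{d-1}$ and interpolate down).
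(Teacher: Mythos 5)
Your overall scheme (Bourgain's periodization from $\mathbb{T}^{d+1}$ to $\mathbb{R}^{d+1}$, followed by the local decoupling inequality) is the same as the paper's, but the key step where you try to produce the $N_2^\varepsilon$ loss does not work as written, and the paper needs a different ingredient at exactly that point.

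The heart of your argument is the claim that after rescaling (first by $c_*\sim N_1$, then ``further rotation/rescaling to normalize the ball to the unit ball'', i.e. by $N_1/N_2$) the Fourier data falls under the hypothesis \eqref{e:ShellDecoup} of Theorem~\ref{theorem:ShellDecoupling} at scale $N_2$, so that a single application of the shell decoupling at scale $N_2$ decouples all the way to individual lattice points with loss $N_2^\varepsilon$. This geometric claim is false. If the second rescaling is a plain dilation $\xi\mapsto(N_1/N_2)\xi$, the annulus of thickness $1/N_1$ and radius $d_*\sim1$ becomes an annulus of thickness $1/N_2$ but radius $d_*N_1/N_2\gg1$; since \eqref{e:ShellDecoup} (equivalently the hypothesis of Theorem~\ref{cl:ShellDecoup}) insists $d_*\in[1,2]$, this does not fit. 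If instead the second rescaling is the \emph{parabolic} rescaling centred at the ball centre $\xi_0$ (which is what one must do to preserve the paraboloid and the $\tau$-thickness), the image of the annulus is a flat slab $\{\,|\hat z\cdot\xi'-c'|\lesssim 1/N_2\,\}$ (up to a small parabolic correction), not a shell of radius $\sim1$. Either way the rescaled data does not satisfy \eqref{e:ShellDecoup} with parameter $N_2$, so Theorem~\ref{theorem:ShellDecoupling} cannot be applied as you propose.

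You also cannot fix this by decoupling in two stages — first into $N_2^{-1}$-caps by Theorem~\ref{theorem:ShellDecoupling} at scale $N_2$ (without the extra rescaling, which is legitimate since $1/N_1\le1/N_2$) and then from $N_2^{-1}$-caps into the $N_1^{-1}$-caps that hold single lattice points. The second step is a plate-to-cap decoupling at a ratio $N_1/N_2$ and, done naively, it costs $(N_1/N_2)^\varepsilon$; the total then is $N_2^\varepsilon\cdot(N_1/N_2)^\varepsilon= N_1^\varepsilon$, which is only what Theorem~\ref{theorem:ShellDecoupling} already gave at scale $N_1$ and is strictly weaker than the claimed $N_2^\varepsilon$ when $N_2\ll N_1$. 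The actual mechanism by which the paper gets the $N_2^\varepsilon$ gain is Corollary~\ref{Cor:TwoScaleDecoup}: a genuinely two-scale decoupling into $N_1^{-1}$-caps (each holding one lattice point) with loss $N_2^\varepsilon$, for data supported on the $N_1^{-1}$-thick shell intersected with a ball of radius $N_2/N_1$. The proof of that corollary does not go through Theorem~\ref{theorem:ShellDecoupling} at scale $N_2$; it uses the refined plate decoupling \eqref{e:MorePrecise}, whose constant is exactly the $(d-1)$-dimensional Bourgain--Demeter constant $\mathcal{D}_{d-1}(K)$ with \emph{no extra power of $\log K$}, applied once at $K=N_1$, followed by a $(d-1)$-dimensional decoupling which, because the reduced function $\overline{h}$ is supported in $[-N_2/N_1,N_2/N_1]^{d-1}$, rescales to a problem at scale $N_2$ and hence costs $N_2^\varepsilon$. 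This refinement — the absence of any $\log K$ factor in \eqref{e:MorePrecise}, which is precisely the improvement over Guo--Zorin-Kranich's Theorem 2.2 — is what lets the loss be measured by the small parameter $N_2$ rather than $N_1$, and is the ingredient missing from your argument. Replacing your appeal to Theorem~\ref{theorem:ShellDecoupling} at scale $N_2$ with Corollary~\ref{Cor:TwoScaleDecoup} (after the single rescaling by $N_1$, with caps $\mathcal C_{N_1^{-1}}$) and keeping your periodization bookkeeping would repair the proof.
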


From this shell Strichartz estimate, we can give an improvement for the trilinear discrete restriction estimate \eqref{e:Trilinear}. 

\begin{corollary}\label{Cor:Trilinear}
Let $d\ge3$ and $s_0$ be in \eqref{assumption:regularity}. Then \eqref{e:Trilinear} holds for $\alpha> s_0 - \frac12$.
\end{corollary}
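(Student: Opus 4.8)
The plan is to deduce Corollary~\ref{Cor:Trilinear} from Theorem~\ref{theorem:StrichartzShell} by the same Hölder-plus-linear-Strichartz scheme demonstrated in the introduction for $d=3$, but now feeding in the improved shell Strichartz exponent $p = \frac{2(d+1)}{d-1}$ in place of Bourgain--Demeter's $p = \frac{2(d+2)}{d}$. First I would perform the reduction already announced in the text: after Littlewood--Paley localisation to $|\xi_j| \sim N$, the resonance relation forces, up to acceptable errors, that $\widehat{\phi_1}, \widehat{\phi_2}$ be supported on shells $\{\, c_j - 1 \le |k| \le c_j + 1 \,\}$ with $c_j \sim N$ (this is exactly the reduction carried out in the proof of Proposition~\ref{proposition:KeyBilinearEst}). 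Hence I may assume $\phi_1, \phi_2$ satisfy the hypothesis~\eqref{e:AssumpStri} with $N_1 = N_2 = N$, and leave $\phi_3$ (attached to the cone) general with $\widehat{\phi_3}$ supported in $\{|\xi| \sim N\}$.

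Next, apply Hölder's inequality in space-time with exponents $(p, p, \frac{p}{p-2})$:
\begin{equation*}
\Big| \int_{[-\pi,\pi]\times\mathbb{T}^d} e^{it\Delta}\phi_1 \, \overline{e^{it\Delta}\phi_2} \, e^{\pm it|\nabla|}\phi_3 \, dt dx \Big|
\le \| e^{it\Delta}\phi_1 \|_{L^p_{t,x}} \| e^{it\Delta}\phi_2 \|_{L^p_{t,x}} \| e^{\pm it|\nabla|}\phi_3 \|_{L^{p/(p-2)}_{t,x}}.
\end{equation*}
For the first two factors invoke Theorem~\ref{theorem:StrichartzShell} with $p = \frac{2(d+1)}{d-1}$, gaining only $N^\varepsilon$. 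For the third factor I would use the standard Strichartz estimate for the periodic wave (or half-wave) propagator $e^{\pm it|\nabla|}$ restricted to frequencies $|\xi|\sim N$; this is the estimate~\eqref{est:CorollaryWaveStrichartzT^d} referenced in the paper. One needs the wave Strichartz exponent matched to $q = \frac{p}{p-2} = d+1$, which should yield a bound of the form $\| e^{\pm it|\nabla|}\phi_3 \|_{L^{d+1}_{t,x}} \lesssim N^{\beta_d + \varepsilon}\|\phi_3\|_{L^2}$ for an explicit $\beta_d$; multiplying the three factors gives $\alpha = \beta_d + \varepsilon$, and the claim is that $\beta_d = s_0 - \tfrac12$ for the $s_0$ in~\eqref{assumption:regularity} (in the regime $d=3$ this is $\beta_3 = \tfrac15$, recovering the computation in the introduction; for $d=4$, $\beta_4 = \tfrac14$; for $d\ge 5$, $\beta_d = \tfrac{d-4}{2}$).

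The main technical points are two. First, one must check that in the borderline cases $d=3,4$ the naive counting exponent from the wave Strichartz estimate really does match $s_0 - \tfrac12$, i.e.\ that the shell-improved Schrödinger Strichartz exponent $\frac{2(d+1)}{d-1}$ is exactly the one for which the Hölder-conjugate wave exponent $d+1$ produces the claimed loss; for $d\ge 5$ the loss is dominated by scaling and the argument is cleaner, while the precise constants for $d=3,4$ are where care is needed (possibly using a different wave Strichartz pair, or interpolating, or invoking an $L^{q}_t L^{r}_x$ refinement rather than the diagonal $L^{d+1}_{t,x}$ estimate). Second, one must make sure the reduction to the shell constraint for $\phi_1,\phi_2$ is legitimate in the trilinear setting — i.e.\ that the off-resonant contributions and the summation over the $O(1)$-many shell centres $c_j$ cost only a harmless constant; this is routine given the resonance identity $|\xi_1|^2 - |\xi_2|^2 \mp |\xi_3| = O(1)$ together with $\xi_1 - \xi_2 + \xi_3 = 0$, but it is the step that must be written out carefully. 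I expect the delicate bookkeeping of exponents in the two low-dimensional cases to be the principal obstacle; everything else is a direct assembly of Theorem~\ref{theorem:StrichartzShell} and the classical wave Strichartz inequality on the torus.
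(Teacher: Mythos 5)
Your strategy is the right one and is, in fact, exactly how the paper obtains the trilinear estimate: it is case~(1) of the proof of Proposition~\ref{proposition:KeyBilinearEst}, where after reduction to shells H\"older is applied with the shell Strichartz estimate for the two Schr\"odinger factors and a wave Strichartz estimate for the third. However, you have two arithmetic slips that you should correct. First, with $p = \frac{2(d+1)}{d-1}$ one has $p-2 = \frac{4}{d-1}$, so the H\"older conjugate for the wave factor is $\frac{p}{p-2} = \frac{d+1}{2}$, not $d+1$. Second, for $d=3$ you wrote $\beta_3 = \frac15$ ``recovering the computation in the introduction,'' but the introduction's $\frac15$ comes from the \emph{unimproved} Bourgain--Demeter exponent $p = \frac{10}{3}$; with the shell improvement one takes $p = 4$, the wave factor lands in $L^2_{t,x}$ and costs \emph{no} power of $N$, so $\beta_3 = 0 = s_0 - \frac12$, consistent with the claim $\alpha > s_0 - \frac12 = 0$. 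With these fixed, the arithmetic works out: for $d=4$ the wave factor in $L^{5/2}_{t,x}$ costs $N^{1/4}$ via the $L^{10/3}_{t,x}$ wave Strichartz estimate plus H\"older in time, and for $d\ge5$ the exponent $\frac{d+1}{2}$ is not directly wave-admissible (it is only for $d=5$), so one interpolates with Bernstein, exactly as you anticipated and as the paper does, yielding $N^{\frac d2 - 2}$.
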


\begin{remark}
\begin{enumerate}
    \item 
Whilst Corollary \ref{Cor:Trilinear} provides almost sharp estimate for \eqref{e:Trilinear} when $d=3$,  the problem of identifying the sharp $\alpha$ for \eqref{e:Trilinear} is still open for $d\ge4$. For instance if one could prove \eqref{e:Trilinear} with $\alpha = 0$ (or $\alpha = \varepsilon$) when $d=4$, then one might obtain the local well-posedness of \eqref{Zakharov} with optimal regularity $s = 0$ (or $s =\varepsilon$) as long as one utilizes the iteration argument.
    \item
    After we completed this paper, it was pointed out by Changkeun Oh that the $\ell^p$-decoupling inequality with an appropriate geometric constraint on ${\rm supp}\, \hat{f}$, was also considered in Guo--Zorin--Kranich \cite[Theorem 2.5]{GZ} whose proof was motivated from the argument by Oh \cite{Oh}. Hence, up to a difference between $\ell^p$-decoupling and $\ell^2$-decoupling, the initiation of the study of the decoupling inequality with an additional constraint on ${\rm supp}\, \hat{f}$ is attributed to Guo--Zorin--Kranich.  
    We will give further detailed comparison about this point after the proof of Theorem \ref{theorem:ShellDecoupling} in Subsection \ref{subsection3.3}.  
    It is interesting to point out that the work by Guo--Zorin--Kranich was motivated by the study of the decoupling inequality for the quadratic surface whose co-dimension is higher than one whilst Theorem \ref{theorem:ShellDecoupling} naturally emerges from a requirement of the study of the periodic Zakharov system. To us, it is not obvious if there is any link between these two subjects. 
\end{enumerate}

\end{remark}

We end this section by giving a comparison to other works.
The trilinear form in \eqref{e:Trilinear} comes from the bilinear estimate for $e^{it\Delta} \phi e^{\pm it|\nabla|} \psi$ with $X^{s,b}$ analysis via duality argument. 
In this regard, it would be of interest to seek a bilinear discrete restriction estimate of the form
$$
\| e^{it\Delta} \phi e^{\pm it|\nabla|} \psi \|_{L^p_{t,x}([-\pi,\pi]\times \mathbb{T}^{d})}
\lesssim  N^{\alpha} M^\beta \| \phi\|_{L^2(\mathbb{T}^d)} \| \psi \|_{L^2(\mathbb{T}^d)}
$$
for appropriate $p,\alpha,\beta$ where ${\rm supp}\, \hat{\phi} \subset \{ |k| \sim N \}$ and ${\rm supp}\, \hat{\psi} \subset \{ |k| \sim M \}$. 
In the continuous case, such a problem is addressed by Candy \cite{Candy21}. See also \cite{Candy19}.

\vspace{2mm}
The rest of the paper is organized as follows: in Section \ref{section:Preliminaries}, we introduce the $X^{s,b}$-type spaces and their properties. In Section \ref{section:DecouplingTheory}, we prove the $\ell^2$-decoupling inequality for functions under shell constraint (Theorem \ref{theorem:ShellDecoupling}) which readily yields the shell Strichartz estimate (Theorem \ref{theorem:StrichartzShell}). 
In Section \ref{section:ProofMainTheorem}, by employing the Strichartz estimate obtained in Section \ref{section:DecouplingTheory}, we handle the nonlinear interactions of Schr\"{o}dinger and wave equations and establish Theorem \ref{theorem:main}. Section \ref{section:ProofNegativeResult} is devoted to the proof of Theorem \ref{theorem:NotC^2}. Lastly in the Appendix, as a consequence of Theorem \ref{theorem:StrichartzShell} for $d=3$ and $p=4$, we introduce a bound of integer solutions of a Diophantine system under the shell constraint.

\section{Preliminaries}\label{section:Preliminaries}
In this section, we introduce the function spaces and collect some fundamental estimates.

Let us first rewrite the original Zakharov system as a first-order system, which is a standard reduction in the study of the Zakharov system. See \cite{GTV97}, \cite{BHHT09}, \cite{Kishi13}. 
Let $w = n + i \langle \nabla \rangle^{-1} \partial_t n$ and $w_0 = n_0 + i \langle \nabla \rangle^{-1} n_1$. 
Then, we may rewrite the system \eqref{Zakharov} as
\begin{equation}\label{Zakharov2}
\begin{cases}
i \partial_t u  + \Delta u = \frac12 (w + \overline{w})u,\\
i \partial_t w - \langle \nabla \rangle w = - \langle \nabla \rangle^{-1} \Delta (|u|^2) - \langle \nabla \rangle^{-1}\bigl( \frac{w + \overline{w}}{2} \bigr),\\
(u(0), w(0)) = (u_0,w_0) \in H^s(\T^d) \times H^{\ell}(\T^d).
\end{cases}
\end{equation}
It is easily seen that the local well-posedness of \eqref{Zakharov2} in $H^s(\T^d) \times H^{\ell}(\T^d)$ implies that of \eqref{Zakharov} in $\mathcal{H}^{s,\ell}(\T^d)$. 
Thus, we consider \eqref{Zakharov2} instead of \eqref{Zakharov} hereafter.

\begin{definition}
Let $\eta :\R \to \R$ be a smooth function satisfying $\eta=1$ on $[-1,1]$ and $\supp \eta \subset (-2,2)$. Let $N \in 2^{\N_0}$ with $\N_0 = \N \cup \{0\}$. Define
\[
\eta_1 = \eta, \quad \eta_{N}(r) = \eta \Bigl( \frac{r}{N} \Bigr) - \eta \Bigl( \frac{2r}{N}\Bigr) \quad (N \geq 2).
\]
$\{P_N\}_{N \in 2^{\N_0}}$ denotes the collection of standard Littlewood--Paley operators defined by $P_N = \F_{k}^{-1} \eta_{N}(|k|) \F_x$. Let $\widetilde{u} (\tau,k)= \F_{t,x} u (\tau,k)$, $L \in 2^{\N_0}$ and
\[
Q_L^S u =  \F_{t,x}^{-1} \big( \eta_{L}(\tau + |k|^2) \widetilde{u} \big), \qquad Q_L^{W_{\pm}} u = \F_{t,x}^{-1}  \big( \eta_{L}(\tau\pm \langle k \rangle) \widetilde{u}\big).
\]
We write $P_{N,L}^S = P_{N} Q_L^S$ and $P_{N,L}^{W_{\pm}} = P_N Q_{L}^{W_{\pm}}$.

We define the function spaces $X_S^{s,b}$ and $X_{W_{\pm}}^{s,b}$ as follows:
\begin{align*}
& X_S^{s,b} = \bigl\{ u \in \mathcal{S}'(\R \times \T^d) \, : \,  \|u\|_{X_S^{s,b}} = \bigl( \sum_{N,L} L^{2b} N^{2s} \| P_{N,L}^S u \|_{L_{t,x}^2}^2 \bigr)^{\frac12} < \infty\bigr\},\\
& X_{W_{\pm}}^{s,b} = \bigl\{ u \in \mathcal{S}'(\R \times \T^d) \, : \,  \|u\|_{X_{W_{\pm}}^{s,b}} = \bigl( \sum_{N,L} L^{2b} N^{2s} \| P_{N,L}^{W_{\pm}} u \|_{L_{t,x}^2}^2 \bigr)^{\frac12} < \infty \bigr\}.
\end{align*}
Let $T>0$ and $X$ be either $X_S^{s,b}$ or $X_{W_{\pm}}^{s,b}$. We define the time restricted space $X(T)$ as follows:
\begin{align*}
X(T) & = \{ u \in C([0,T);H^s(\T^d)) \, : \,  \|u\|_{X(T)} < \infty \},\\
\|u\|_{X(T)} & = \inf  \{\|U\|_{X} \, : \, U \in X, \ \  u(t) = U(t) \ \forall  t \in (0,T) \}.
\end{align*}
Notice that, in the case $b > \frac12$, the Sobolev embedding in time implies $\|u\|_{L^{\infty}_t H^s} \lesssim \|u\|_{X^{s,b}_S}$. Thus, if $b >\frac12$, $X^{s,b}_S(T)$ is a Banach space. The same holds for $X^{s,b}_{W_{\pm}}(T)$. 
\end{definition}
We introduce the well-known property of $X^{s,b}$-type spaces.
For the sake of completeness, we include the proof. 
\begin{lemma}[Transference principle]\label{lemma:TransferencePrinciple}
Let $U(t) \in \{e^{it\Delta}, e^{\mp it \langle \nabla \rangle}\}$. 
We use the notations
\[
X_{U}^{s,b}= \begin{cases} 
X_S^{s,b} & \mathrm{if} \ \ U(t) = e^{it\Delta},\\
X_{W_{\pm}}^{s,b} & \mathrm{if} \ \ U(t) = e^{\mp it \langle \nabla \rangle},
\end{cases}
\qquad Q_L^{U}  = \begin{cases} 
Q_L^{S}& \mathrm{if} \ \ U(t) = e^{it\Delta},\\
Q_L^{W_{\pm}} & \mathrm{if} \ \ U(t) = e^{\mp it \langle \nabla \rangle}.
\end{cases}
\]
Let $N \in 2^{\N_0}$ and $\phi \in L^2(\T^d)$. 
Assume that there exist $q$, $r \in [2,\infty]$, and $\alpha \in \R$ such that the following linear estimate holds.
\begin{equation}\label{ass:lemma1.4-Strichartz}
\|U(t) P_N \phi \|_{L_{[-1,1]}^q L_x^r} \lesssim N^{\alpha} \|P_N \phi\|_{L_x^2}.
\end{equation}
Then, for $L \in  2^{\N_0}$, we have
\begin{equation}\label{est:lemma1.4-goal}
\|  Q_{L}^{U} P_N u \|_{L_{t}^q L_x^r} \lesssim  L^{\frac12} N^{\alpha} \|Q_{L}^{U} P_N u\|_{L_{t,x}^2},
\end{equation}
where $L^q_t$ denotes $L^q_t(\mathbb{R})$.
\end{lemma}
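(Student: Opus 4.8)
The plan is to prove the Schr\"odinger case $U(t)=e^{it\Delta}$; the wave cases $U(t)=e^{\mp it\langle\nabla\rangle}$ are identical after replacing $|k|^2$ by the symbol $\pm\langle k\rangle$. Set $v:=Q^U_LP_Nu$ and write $p(k):=-|k|^2$, so that $U(t)$ has symbol $e^{itp(k)}$, $\widetilde v(\tau,k)=\eta_L(\tau-p(k))\,\eta_N(|k|)\,\widetilde u(\tau,k)$, and $\supp\widetilde v\subset\{(\tau,k):|\tau-p(k)|\lesssim L,\ |k|\sim N\}$. Applying the inverse Fourier transform in $t$ and the substitution $\tau=p(k)+\sigma$ gives the representation
\[
v(t,x)=\int_{\R}e^{it\sigma}\,\bigl(U(t)g_\sigma\bigr)(x)\,d\sigma,\qquad \widehat{g_\sigma}(k):=\widetilde v(p(k)+\sigma,k),
\]
in which $g_\sigma=P_Ng_\sigma$, $g_\sigma\equiv0$ for $|\sigma|\gtrsim L$ (since $\eta_L$ is compactly supported), and $\int_{\R}\|g_\sigma\|_{L^2}^2\,d\sigma\simeq\|v\|_{L^2_{t,x}}^2$ by Plancherel; the same identity applied to $\psi v$ in place of $v$ produces functions $g^\psi_\sigma$ with $\int_{\R}\|g^\psi_\sigma\|_{L^2}^2\,d\sigma\simeq\|\psi v\|_{L^2_{t,x}}^2$.

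Next I would prove a localized-in-time estimate. Fix $j\in\Z$, put $I_j=[j,j+1)$, and choose a smooth $\psi_j$ with $\psi_j\equiv1$ on $I_j$ and $\supp\psi_j\subset[j-1,j+2]$. Because $v=\psi_jv$ on $I_j$ and $\psi_jv$ is supported in the bounded set $[j-1,j+2]$, one may legitimately apply Minkowski's integral inequality in $\sigma$ on that bounded time interval, and then use \eqref{ass:lemma1.4-Strichartz} on each unit subinterval (the estimate being invariant under time translation, and $U(t)$ unitary on $L^2$) to obtain
\[
\|v\|_{L^q_{I_j}L^r_x}\le\|\psi_jv\|_{L^q_{[j-1,j+2]}L^r_x}\lesssim N^\alpha\int_{\R}\|g^{\psi_j}_\sigma\|_{L^2}\,d\sigma .
\]
I would then bound the last integral by splitting at $|\sigma|\simeq L$: for $|\sigma|\lesssim L$, the Cauchy--Schwarz inequality together with $\int\|g^{\psi_j}_\sigma\|_{L^2}^2\,d\sigma\simeq\|\psi_jv\|_{L^2_{t,x}}^2$ produces the factor $L^{1/2}$; for $|\sigma|\gtrsim L$, integration by parts in $t$ (using that $\psi_j$ is smooth and that the $t$-Fourier support of each spatial mode of $v$ has length $\lesssim L$, via a local Bernstein inequality) shows $\|g^{\psi_j}_\sigma\|_{L^2}\lesssim_M(L/|\sigma|)^M$ times a local $L^2$-mass of $v$, and summing the dyadic pieces again gives at most $L^{1/2}$ times such a mass. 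Altogether
\[
\|v\|_{L^q_{I_j}L^r_x}\lesssim L^{1/2}N^\alpha\sum_{l\in\Z}\langle l\rangle^{-10}\,\|v\|_{L^2_{I_{j+l}\times\T^d}} .
\]

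Finally I would sum over $j$. Writing $b_j:=\|v\|_{L^2_{I_j\times\T^d}}$ and $w_l:=\langle l\rangle^{-10}\in\ell^1(\Z)$, the previous display reads $\|v\|_{L^q_{I_j}L^r_x}\lesssim L^{1/2}N^\alpha(w*b)_j$. Since $q\ge2$ one has $\|\cdot\|_{\ell^q(\Z)}\le\|\cdot\|_{\ell^2(\Z)}$, so Young's inequality for sequences yields
\[
\|v\|_{L^q_t(\R)L^r_x}=\Bigl(\sum_{j\in\Z}\|v\|_{L^q_{I_j}L^r_x}^q\Bigr)^{1/q}\lesssim L^{1/2}N^\alpha\|w*b\|_{\ell^q}\le L^{1/2}N^\alpha\|w\|_{\ell^1}\|b\|_{\ell^2}\simeq L^{1/2}N^\alpha\|v\|_{L^2_{t,x}},
\]
which is \eqref{est:lemma1.4-goal}.

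The step I expect to be the main obstacle is the control of the $L^q_t(\R)$ norm in the second paragraph. The naive route---apply Minkowski's inequality in $\sigma$ directly to $v=\int e^{it\sigma}U(t)g_\sigma\,d\sigma$ and invoke \eqref{ass:lemma1.4-Strichartz}---fails because each $U(t)g_\sigma$ has infinite $L^q_t(\R)L^r_x$ norm: free evolutions on $\T^d$ do not decay in time. What saves the argument is that $v$ is frequency-localized in time, so that multiplying by a compactly supported time cutoff $\psi_j$ spreads the $\sigma$-support only by $O(1)$ (hence the tail term $|\sigma|\gtrsim L$ is negligible) while making Minkowski's inequality legitimate on a bounded interval; this, combined with the elementary embedding $\ell^q\hookrightarrow\ell^2$ for $q\ge2$, is what both forces the restriction $q\ge2$ and retains the sharp weight $L^{1/2}$, which comes from the $\sigma$-variable effectively ranging over an interval of length $\simeq L$.
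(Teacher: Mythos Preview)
Your overall strategy matches the paper's: localize in time to unit intervals, use the representation $v(t,x)=\int e^{it\sigma}U(t)g_\sigma(x)\,d\sigma$, apply Minkowski plus the linear estimate on each interval, extract $L^{1/2}$ from the $\sigma$-integral by Cauchy--Schwarz, and sum using $\ell^q\hookrightarrow\ell^2$ for $q\ge2$. The difference is the choice of cutoff: the paper takes $\psi\in\mathcal{S}(\R)$ with \emph{compact Fourier support} $\supp\F_t\psi\subset(-2,2)$, whereas you take $\psi_j$ compactly supported in \emph{physical} time. The paper's choice forces $U(-t)\psi_m Q_L^U P_N u$ to have time-Fourier support in $\{|\tau|\le 4L\}$ exactly, so the $\sigma$-integral is genuinely over an interval of length $\sim L$ and Cauchy--Schwarz immediately gives $L^{1/2}\|\psi_m v\|_{L^2_{t,x}}$ --- a local mass with no tail. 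Your choice leaves a tail $|\sigma|\gtrsim L$, and your treatment of that tail contains the gap.

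Concretely: you claim the pointwise bound $\|g^{\psi_j}_\sigma\|_{L^2}\lesssim_M (L/|\sigma|)^M\cdot(\text{local mass})$ and then assert that ``summing the dyadic pieces again gives at most $L^{1/2}$ times such a mass.'' But $\int_{|\sigma|>L}(L/|\sigma|)^M\,d\sigma\sim L$ for any $M>1$, not $L^{1/2}$; the pointwise bound by itself loses a factor of $L^{1/2}$. One can repair this --- for instance, bound the tail by Cauchy--Schwarz with weight $|\sigma|^{-2}$ and Plancherel, reducing to $L^{-1/2}\bigl(\sum_k\|\partial_t(\psi_j W_k)\|_{L^2_t}^2\bigr)^{1/2}$, and then invoke a genuine local-in-time Bernstein inequality for band-limited functions to bound $\|W_k'\|_{L^2([j-1,j+2])}$ by $L$ times a rapidly-decaying weighted sum of $\|W_k\|_{L^2(I_{j+l})}$ --- but this is more work than your sketch suggests, and the ``local Bernstein'' step is itself nontrivial. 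The paper's compact-Fourier-support cutoff sidesteps the whole issue; the price is that $\psi$ is no longer compactly supported in time, which the paper handles via the Schwartz decay of $\psi$ when upgrading the $[-1,1]$ estimate to $\|\psi(t)U(t)P_N\phi\|_{L^q_t(\R)L^r_x}\lesssim N^\alpha\|P_N\phi\|_{L^2_x}$.
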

\begin{proof}
First, we consider the case $q < \infty$. 
Let us choose $\psi \in \mathcal{S}(\R)$ so that $\supp \F_t \psi \subset (-2,2)$ and 
\begin{equation}\label{e:AssumpPsi}
2^{-10}
\le 
\big(
\sum_{m \in \mathbb{Z}} 
|\psi(t-m)|^{2q} 
\big)^\frac1q,
\quad 
\sum_{m \in \mathbb{Z}} 
|\psi(t-m)|
\le 2^{10}
\end{equation}
for any $t \in \R$. 
We define $\psi_m(\cdot) = \psi(\cdot - m)$. 
Since $\psi \in \mathcal{S}(\R)$ and $U(t)$ is unitary in $L^2(\T^d)$, it is easy to see that the estimate~\eqref{ass:lemma1.4-Strichartz} provides
\[
\|\psi(t)U(t) P_N \phi \|_{L_t^q L_x^r} \lesssim N^{\alpha} \|P_N \phi\|_{L_x^2}.
\]
Using the translation invariance of the Lebesgue measure on $\R$, we have 
\begin{equation}\label{e:TransEst}
\|\psi(t - m)U(t - m) P_N \phi \|_{L_t^q L_x^r} \lesssim N^{\alpha} \|P_N \phi\|_{L_x^2}
\end{equation}
for all $m \in \mathbb{Z}$. 
Our goal is to prove
\begin{equation}\label{est:lemma1.4-01}
\|  \psi_m^2 Q_{L}^{U} P_N u \|_{L_t^q L_x^r} \lesssim  L^{\frac12}  N^{\alpha} \| \psi_m Q_{L}^{U} P_N u\|_{L_{t,x}^2},
\end{equation}
where the implicit constant does not depend on $m \in \Z$. Let us see that this inequality implies~\eqref{est:lemma1.4-goal}. 
By applying the first inequality in \eqref{e:AssumpPsi}, the Minkowski inequality for $\ell^{\frac{q}2}_m$ in view of $q\ge2$, and the second  inequality in \eqref{e:AssumpPsi}, we obtain that 
\begin{align*}
\|  Q_{L}^{U} P_N u \|_{L_t^q L_x^r} 
&\lesssim 
\bigg\{ 
\int 
\bigg[
\big(
\sum_{m \in \mathbb{Z}} 
|\psi(t-m)|^{2q} 
\big)^\frac1q 
\|  Q_{L}^{U} P_N u (t) \|_{L_x^r}
\bigg]^q\, dt
\bigg\}^\frac1q\\
& =
\Bigl(  \int \sum_{m \in \Z} |\psi_m(t)|^{2q} \|Q_{L}^{U} P_N u \|_{L_x^r}^q dt \Bigr)^{\frac{1}{q}}\\
& \sim \Bigl( \sum_{m\in \Z}\| \psi_m^2 Q_{L}^{U} P_N u \|_{L_t^q L_x^r}^q \Bigr)^{\frac{1}{q}}\\
& \lesssim \Bigl( \sum_{m\in \Z} \big(L^{\frac12}  N^{\alpha} \big)^{q}
\| \psi_m Q_{L}^{U} P_N u\|_{L_{t,x}^2}^q\Bigr)^{\frac{1}{q}}\\
& \lesssim L^{\frac12} N^{\alpha} \|Q_{L}^{U} P_N u\|_{L_{t,x}^2}.
\end{align*}

We consider~\eqref{est:lemma1.4-01}. It is straightforward to check that
\begin{align*}
& \F_{t,x} \bigl( e^{-it \Delta} Q_L^{S}u\bigr) (\tau,k) = \eta_L(\tau)\widetilde{u}(\tau-|k|^2,k),\\
& \F_{t,x}\bigl(e^{\pm it \langle \nabla \rangle}Q_L^{W_{\pm}} u \bigr)(\tau,k) = \eta_L(\tau) \widetilde{u}(\tau \mp \langle k \rangle, k).
\end{align*}
Therefore, it follows from $\supp \F_t \psi \subset (-2,2)$ and $L \in 2^{\N_0}$ that
\begin{equation}\label{est:lemma1.4-02}
\F_{t,x} (U(-t) \psi_m Q_L^U P_N u) (\tau,k) = \1_{4L}(\tau) \F_{t,x} (U(-t) \psi_m Q_L^U P_N u) (\tau,k)
\end{equation}
Here $\1_{L}$ denotes the characteristic function of the set $\{|\tau| \leq L\}$.
From \eqref{est:lemma1.4-02}, we obtain
\[
u(t) = \int e^{it \tau} U(t)\bigl( \F_t U(- \cdot) u \bigr)(\tau) d \tau,
\]
and from \eqref{e:TransEst} that
\begin{align*}
\| \psi_m^2 Q_{L}^{U} P_N u \|_{L_t^q L_x^r} & = \Bigl\|  \int e^{it \tau} \psi_m (t)U(t)\bigl( \F_t U(- \cdot)\psi_mQ_{L}^{U} P_N u\bigr) d \tau \Bigr\|_{L_t^q L_x^r}\\
& \lesssim  \int \Bigl\|  \psi(t-m)U(t-m)U(m)\bigl( \F_t U(- \cdot)\psi_m Q_{L}^{U} P_N u\bigr)  \Bigr\|_{L_t^q L_x^r} d \tau\\
& \lesssim  N^{\alpha} \int \|\F_{t,x}U(- \cdot)\psi_m Q_{L}^{U} P_N u\|_{L_{k}^2}d \tau\\
& =   N^{\alpha}\int \1_{4L}(\tau) \| \F_{t,x}U(- \cdot)\psi_m Q_{L}^{U} P_N u\|_{L_{k}^2}d \tau \\
& \lesssim 
 L^{\frac12}  N^{\alpha} \|\psi_m Q_{L}^{U} P_N u\|_{L_{t,x}^2}.
\end{align*}
This completes the proof for the case $q< \infty$.

We assume $q= \infty$. For any $T \in \R$, it follows that
\[
\|\psi(t-T)U(t)P_N \phi\|_{L_t^{q}L_x^r} = \|\psi(t)U(t) U(T) P_N \phi\|_{L_t^q L_x^r} \lesssim N^{\alpha} \|P_N\phi\|_{L_x^2}.
\]
Hence, when $q= \infty$, we have $\|U(t) P_N \phi\|_{L_t^{\infty}L_x^r} \lesssim  N^{\alpha} \|P_N \phi\|_{L_x^2}$. In the same way as above, we can verify the claim.
\end{proof}

\begin{remark}\label{remark:TransferencePrinciple}
Let $U(t) \in \{e^{it\Delta}, e^{\mp it \langle \nabla \rangle} \}$. Define
\begin{equation*}
    \Phi_U(k) :=
\begin{cases}
|k|^2 \qquad & \mathrm{if} \ U(t) = e^{it\Delta},\\
\pm \langle k \rangle & \mathrm{if} \ U(t) = e^{\mp it\langle \nabla \rangle}.
\end{cases}
\end{equation*}
For $\tau_0 \in \R$, notice that $\F_{t,x}Q_L^U (e^{it \tau_0}u) = \eta_{L} (\tau - \tau_0 + \Phi_U(k) ) \widetilde{u}$. Hence, Lemma~\ref{lemma:TransferencePrinciple} implies that if \eqref{ass:lemma1.4-Strichartz} holds then for any $\tau_0 \in \R$ it follows that
\[
\| u_{N,L,\tau_0} \|_{L_{t}^q L_x^r} \lesssim L^{\frac12} N^{\alpha} \| u_{N,L,\tau_0}\|_{L_{t,x}^2}, 
\]
where $u_{N,L,\tau_0}$ satisfies
\begin{equation*}
  \supp~\widetilde{u}_{N,L,\tau_0} \subset \{(\tau,k) \in \R \times \Z^d \, : \, |\tau-\tau_0 -\Phi_{U}(k)| \lesssim L, \quad 
|k| \sim N \}.  
\end{equation*}
\end{remark}

Combining Theorem \ref{theorem:StrichartzS-BD} and Lemma \ref{lemma:TransferencePrinciple}, we have the following corollary.
\begin{corollary}\label{corollary:StrichartzS}
Let $N$, $L \in 2^{\N_0}$. Assume that $q, \, p \in [2, \infty]$ satisfy
\[
\frac{2(d+2)}{d} \leq q \leq \infty, \qquad \frac{1}{q} = \frac{d}{2} \Bigl(\frac12 - \frac1p \Bigr).
\]
Then, we have
\begin{equation}\label{est:StrichartzS}
\| P_{N,L}^S u\|_{L_t^q L_x^p} \lesssim_\varepsilon L^{\frac12}N^{\varepsilon}\|P_{N,L}^S u\|_{L_{t,x}^2}
\end{equation}
for all $\varepsilon>0$. 
\end{corollary}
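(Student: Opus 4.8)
The statement is obtained by combining Theorem \ref{theorem:StrichartzS-BD} with the transference principle of Lemma \ref{lemma:TransferencePrinciple}, so the only real task is to upgrade the space--time $L^{p_0}$ periodic Strichartz estimate, with $p_0:=\frac{2(d+2)}{d}$, to the full admissible family of mixed-norm linear estimates and then feed these into the lemma. First I would record the two endpoint linear bounds for $e^{it\Delta}$ on $\mathbb{T}^{d+1}$: Theorem \ref{theorem:StrichartzS-BD} with $p=p_0$ gives $\|e^{it\Delta}\psi\|_{L^{p_0}_{t,x}(\mathbb{T}^{d+1})}\lesssim_\varepsilon N^\varepsilon\|\psi\|_{L^2_x(\mathbb{T}^d)}$ whenever $\supp\widehat\psi\subset\{|k|\le N\}$, while unitarity of $e^{it\Delta}$ on $L^2(\mathbb{T}^d)$ gives the trivial bound $\|e^{it\Delta}\psi\|_{L^\infty_t L^2_x}=\|\psi\|_{L^2_x}$ for all $\psi$, with no Fourier support restriction.

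Next I would interpolate these two bounds in the mixed Lebesgue scale $L^q_t(L^p_x)$, applied to the fixed space--time function $e^{it\Delta}\psi$. With interpolation parameter $\theta\in[0,1]$ one obtains $\frac1q=\frac{\theta}{p_0}$ and $\frac1p=\frac{\theta}{p_0}+\frac{1-\theta}{2}$; eliminating $\theta$ and using $p_0-2=\frac4d$ yields exactly $\frac1q=\frac d2\big(\frac12-\frac1p\big)$, with $q$ ranging over $[p_0,\infty]$ as $\theta$ ranges over $[1,0]$. Hence, for every such pair $(q,p)$,
\[
\|e^{it\Delta}\psi\|_{L^q_tL^p_x(\mathbb{T}^{d+1})}\lesssim_\varepsilon N^\varepsilon\|\psi\|_{L^2_x(\mathbb{T}^d)},\qquad \supp\widehat\psi\subset\{|k|\le N\}.
\]
Taking $\psi=P_N\phi$ (whose spatial Fourier support lies in $\{|k|\lesssim N\}$, which only costs a harmless constant inside the $N^\varepsilon$) and restricting the time integration to $[-1,1]\subset[-\pi,\pi]$, which only decreases the left-hand side, we obtain precisely the hypothesis \eqref{ass:lemma1.4-Strichartz} of Lemma \ref{lemma:TransferencePrinciple} for $U(t)=e^{it\Delta}$, $r=p$, and $\alpha=\varepsilon$.

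Finally I would apply Lemma \ref{lemma:TransferencePrinciple} with $X^{s,b}_U=X^{s,b}_S$ and $Q^U_L=Q^S_L$; its conclusion \eqref{est:lemma1.4-goal} reads $\|Q^S_LP_Nu\|_{L^q_tL^p_x}\lesssim_\varepsilon L^{1/2}N^\varepsilon\|Q^S_LP_Nu\|_{L^2_{t,x}}$, which is \eqref{est:StrichartzS} since $P^S_{N,L}=P_NQ^S_L$. There is no substantive obstacle here: the whole content lies in Theorem \ref{theorem:StrichartzS-BD} and Lemma \ref{lemma:TransferencePrinciple}. The only points deserving a line of care are the elementary algebra matching the interpolated exponents to the admissibility relation $\frac1q=\frac d2\big(\frac12-\frac1p\big)$, and the observation that the degenerate endpoint $q=\infty$, $p=2$ is handled directly by the unitarity bound (so no $\varepsilon$-loss is actually needed there, although stating it with the $N^\varepsilon$ factor does no harm).
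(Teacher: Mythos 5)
Your proof is correct and uses exactly the same two ingredients as the paper's proof, namely Theorem \ref{theorem:StrichartzS-BD}, the transference principle of Lemma \ref{lemma:TransferencePrinciple}, and H\"older interpolation against the trivial $L^\infty_t L^2_x$ bound; the only difference is that you interpolate at the level of the linear flow $e^{it\Delta}\phi$ and then transfer, whereas the paper first transfers to obtain the pure-norm bound \eqref{est:StrichartzS-pure} for $P_{N,L}^S u$ and then interpolates that directly with $\|P_{N,L}^S u\|_{L^\infty_t L^2_x}\lesssim L^{1/2}\|P_{N,L}^S u\|_{L^2_{t,x}}$. Both orderings are valid and yield the same conclusion.
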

\begin{proof}
It follows from Lemma~\ref{lemma:TransferencePrinciple} and Theorem~\ref{theorem:StrichartzS-BD} that
\begin{equation}\label{est:StrichartzS-pure}
\| P_{N,L}^S u\|_{L_{t,x}^{\frac{2(d+2)}{d}}} \lesssim L^{\frac12}N^{\varepsilon}\|P_{N,L}^S u\|_{L_{t,x}^2}.
\end{equation}
By interpolating \eqref{est:StrichartzS-pure} and the trivial estimate 
$\| P_{N,L}^S u\|_{L_t^{\infty} L_x^2} \lesssim L^{\frac12}\|P_{N,L}^S u\|_{L_{t,x}^2}$, we obtain \eqref{est:StrichartzS}.
\end{proof}
Next we recall the Strichartz estimate for a solution of the wave equation on $\R^{d+1}$. 
For the details, we refer to \cite{GV95}, \cite{KT98}.
\begin{theorem}\label{theorem:WaveStrichartzR^d}
Let $d \geq 2$ and assume that $q, \, p \in [2, \infty]$ satisfy
\begin{equation}\label{ass:WaveAdmissible}
\frac1q = \frac{d-1}{2} \Bigl(\frac12 - \frac1p \Bigr), \quad  (q,p,d) \not=(2,\infty,3).
\end{equation}
Then, we have

\begin{equation}\label{est:WaveStrichartzR^d}
\|  \mathcal{W} (t)( f, g) \|_{L_{t}^q L_{x}^p(\R \times \R^d)} \\
\lesssim  \| f\|_{\dot{H}^{\frac{d}{2}- \frac{d}{p}-\frac1q}(\R^d)} +\|  g\|_{\dot{H}^{\frac{d}{2}- \frac{d}{p} - \frac1q -1}(\R^d)},
\end{equation}
where 
\[
\mathcal{W} (t) (f,g) := \cos(t |\nabla|) f + \frac{\sin(t|\nabla|)}{|\nabla|} g.
\]

\end{theorem}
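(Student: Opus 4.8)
The statement is the classical wave Strichartz estimate, recorded here for later use and cited from \cite{GV95,KT98}; the natural plan is the standard dispersive/$TT^*$ argument together with a Littlewood--Paley reassembly, so I only sketch the structure.

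\emph{Step 1: reduction to a single frequency.} The homogeneous wave flow is invariant under $u(t,x)\mapsto u(\lambda t,\lambda x)$, and the exponents in \eqref{est:WaveStrichartzR^d} are chosen precisely so that both sides scale the same way; hence it suffices to prove, for data with Fourier support in $\{|\xi|\sim1\}$,
\[
\|e^{\pm it|\nabla|}P_1 f\|_{L_t^qL_x^p(\R\times\R^d)}\lesssim\|P_1 f\|_{L_x^2(\R^d)}.
\]
Applying this to the rescaled piece $P_N f$ gives the same bound with the factor $N^{\frac d2-\frac dp-\frac1q}$, and since $q,p\ge2$ Minkowski's inequality yields $\|u\|_{L_t^qL_x^p}\lesssim\big(\sum_N\|P_Nu\|_{L_t^qL_x^p}^2\big)^{1/2}$, which applied to $u=e^{\pm it|\nabla|}f$ produces exactly the $\dot H^{\frac d2-\frac dp-\frac1q}$ norm on the right. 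Writing $\cos(t|\nabla|)=\frac12(e^{it|\nabla|}+e^{-it|\nabla|})$ and $\frac{\sin(t|\nabla|)}{|\nabla|}=\frac1{2i|\nabla|}(e^{it|\nabla|}-e^{-it|\nabla|})$ then reduces $\mathcal{W}(t)(f,g)$ to the two half-wave propagators acting on $f$ and on $|\nabla|^{-1}g$ respectively, the latter accounting for the one-derivative shift in the norm of $g$.

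\emph{Step 2: dispersive estimate.} For frequency-localized data one has the trivial energy bound $\|e^{it|\nabla|}P_1 f\|_{L_t^\infty L_x^2}\lesssim\|P_1 f\|_{L_x^2}$ (unitarity), together with
\[
\|e^{it|\nabla|}P_1 f\|_{L_x^\infty}\lesssim(1+|t|)^{-\frac{d-1}2}\|P_1 f\|_{L_x^1},
\]
obtained by stationary phase for the kernel $\int e^{i(x\cdot\xi+t|\xi|)}\chi(\xi)\,d\xi$, where the decay exponent $\frac{d-1}2$ is exactly the number of nonvanishing principal curvatures of the unit sphere $\{|\xi|=1\}$. Interpolating, $\|e^{it|\nabla|}P_1 f\|_{L_x^p}\lesssim(1+|t|)^{-\frac{d-1}2(1-\frac2p)}\|P_1 f\|_{L_x^{p'}}$ for $2\le p\le\infty$.

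\emph{Step 3: $TT^*$ and Hardy--Littlewood--Sobolev.} With $Tf=e^{it|\nabla|}P_1 f$, the operator $TT^*$ acts by convolution in time against $e^{i(t-s)|\nabla|}P_1^2$, so Step 2 gives
\[
\|TT^*G\|_{L_t^qL_x^p}\lesssim\Big\|\int_\R|t-s|^{-\frac{d-1}2(1-\frac2p)}\|G(s)\|_{L_x^{p'}}\,ds\Big\|_{L_t^q(\R)}.
\]
The admissibility identity $\frac1q=\frac{d-1}2(\frac12-\frac1p)$ is precisely the relation $\frac{d-1}2(1-\frac2p)=\frac2q$ under which one-dimensional Hardy--Littlewood--Sobolev, at the exponent $\frac2q<1$, bounds the right-hand side by $\|G\|_{L_t^{q'}L_x^{p'}}$; duality then yields $\|Tf\|_{L_t^qL_x^p}\lesssim\|P_1 f\|_{L_x^2}$, and Steps 1--3 combine to give \eqref{est:WaveStrichartzR^d}. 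The only genuinely delicate point, and the main obstacle, is the endpoint $q=2$ (equivalently $\frac2q=1$), which is admissible only when $d\ge4$ and forces the excluded case $p=\infty$ when $d=3$: here the naive Hardy--Littlewood--Sobolev step fails at the borderline exponent, and one must instead run the Keel--Tao argument, decomposing $TT^*$ dyadically in $|t-s|$ and interpolating the resulting bilinear estimates; this is exactly why the hypothesis excludes $(q,p,d)=(2,\infty,3)$.
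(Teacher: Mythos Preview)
Your sketch is correct and follows the standard dispersive/$TT^*$ route of \cite{GV95} together with the endpoint argument of \cite{KT98}. Note, however, that the paper does not give its own proof of this theorem: it is merely recalled from the literature with a reference to \cite{GV95,KT98}, so there is nothing to compare against beyond the fact that your outline is precisely the argument those cited works carry out.
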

It is well-known that the solution to the linear wave equation possesses the finite speed of propagation property. We refer to \cite{Evans10}, \cite{Tao06}, \cite{Tzve19} for the details. 
By exploiting such a property, 
one can derive the Strichartz estimates for the linear wave equation under the periodic setting from those in the Euclidean space. 
See e.g. \cite{Tzve19}. 
Combining Theorem \ref{theorem:WaveStrichartzR^d} and Lemma \ref{lemma:TransferencePrinciple}, we prove the following:
\begin{corollary}\label{corollary:StrichartzW}
Let $d \geq 2$, $L, N \in 2^{\N_0}$ and assume that $q, \, p \in [2, \infty]$ satisfy \eqref{ass:WaveAdmissible}. Then, we have
\begin{equation}\label{est:StrichartzW}
\|w_{\pm}\|_{L_t^q L_x^p} \lesssim L^{\frac12} N^{\frac{d}{2}-\frac{d}{p} - \frac{1}{q}} \|w_{\pm}\|_{L_{t,x}^2},
\end{equation}
for $w_{\pm} \in L^2(\R \times \T^d)$ such that
\[
\supp \widetilde{w}_{\pm} \subset \{ (\tau,k) \in \R \times \Z^d \, : \, 
|\tau- \tau_0 \pm \langle k \rangle | \lesssim L, \ \  |k| \sim N \},
\]
where $\tau_0 \in \R$.
\end{corollary}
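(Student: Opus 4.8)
The plan is to deduce~\eqref{est:StrichartzW} from the \emph{frequency-localized} Strichartz estimate for the half-Klein--Gordon flow on the torus, namely
\[
\| e^{\mp it\langle\nabla\rangle} P_N\phi\|_{L^q_{[-1,1]}L^p_x(\T^d)}
\lesssim N^{\frac d2-\frac dp-\frac1q}\,\|P_N\phi\|_{L^2_x(\T^d)}
\]
for every pair $(q,p)$ obeying~\eqref{ass:WaveAdmissible}, via the transference principle. Indeed, granting this bound, Lemma~\ref{lemma:TransferencePrinciple} --- or more precisely its modulated version in Remark~\ref{remark:TransferencePrinciple}, applied with $U(t)=e^{\mp it\langle\nabla\rangle}$ (the sign chosen according to that of $w_\pm$), $r=p$, and $\alpha=\tfrac d2-\tfrac dp-\tfrac1q$ --- produces exactly~\eqref{est:StrichartzW}, with the $L^{1/2}$ loss built into the conclusion of that lemma. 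So the whole content lies in the displayed periodic Strichartz estimate, and this I would obtain from the Euclidean estimate of Theorem~\ref{theorem:WaveStrichartzR^d} through finite speed of propagation, in the spirit of~\cite{Tzve19}.

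The first step is to pass from the half-Klein--Gordon propagator $e^{\mp it\langle\nabla\rangle}$ to the half-wave propagator $e^{\mp it|\nabla|}$. These are commuting Fourier multipliers, and on the shell $|\xi|\sim N$ one can write $e^{\mp it\langle\nabla\rangle}=m_t(\nabla)\,e^{\mp it|\nabla|}$ with $m_t(\xi)=e^{\mp it(\langle\xi\rangle-|\xi|)}$. Since $\langle\xi\rangle-|\xi|=(\langle\xi\rangle+|\xi|)^{-1}$ and each $\xi$-derivative of this quantity gains a factor $\sim N^{-1}$ on $|\xi|\sim N$, the symbol $m_t$ satisfies Mikhlin-type bounds at scale $N$ uniformly for $|t|\le1$; hence $m_t(\nabla)$ is bounded on $L^p(\T^d)$ when restricted to functions of frequency $\sim N$, uniformly in $t$. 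Therefore it suffices to bound $\|e^{\mp it|\nabla|}P_N\phi\|_{L^q_{[-1,1]}L^p_x(\T^d)}$, and for this I would use that $u=e^{\mp it|\nabla|}P_N\phi$ solves the linear wave equation $\partial_t^2u=\Delta u$ on $\T^d$ with data $(P_N\phi,\mp i|\nabla|P_N\phi)$.

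The second step is the transfer to $\R^d$. Fix $Q_0=[-\pi,\pi]^d$ as a fundamental domain and $\chi\in C_c^\infty(\R^d)$ equal to $1$ on the $1$-neighbourhood of $Q_0$ (contained in $2Q_0$, since $\pi+1<2\pi$). Viewing $\phi$ as a $2\pi\Z^d$-periodic function, let $v$ solve the wave equation on $\R^{d+1}$ with data $(\chi\phi,\mp i\chi|\nabla|\phi)$; on the $1$-neighbourhood of $Q_0$ this data coincides with the periodic data of $u$, so finite speed of propagation (speed $1$) forces $v=u$ on $Q_0\times[-1,1]$. Using $x$-periodicity and then Theorem~\ref{theorem:WaveStrichartzR^d},
\begin{align*}
\|e^{\mp it|\nabla|}P_N\phi\|_{L^q_{[-1,1]}L^p_x(\T^d)}
&\sim\|v\|_{L^q_{[-1,1]}L^p_x(Q_0)}\le\|v\|_{L^q_tL^p_x(\R\times\R^d)}\\
&\lesssim\|\chi\phi\|_{\dot H^{\sigma}(\R^d)}+\|\chi|\nabla|\phi\|_{\dot H^{\sigma-1}(\R^d)},
\end{align*}
with $\sigma=\tfrac d2-\tfrac dp-\tfrac1q=\tfrac{d+1}{4}\cdot\tfrac{p-2}{p}\ge0$. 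Since $\widehat{\chi\phi}=\widehat{\chi}*\sum_{|k|\sim N}\widehat{\phi}(k)\delta_k$ has its frequency content concentrated in $\{|\xi|\sim N\}$ up to Schwartz tails, all relevant homogeneous Sobolev norms of $\chi\phi$ and $\chi|\nabla|\phi$ are controlled by $N^\sigma\|\phi\|_{L^2(\T^d)}$ (these norms make sense because $\sigma-1>-d/2$ for every admissible pair except the energy pair $(q,p)=(\infty,2)$, which is the trivial bound $\|e^{\mp it\langle\nabla\rangle}\phi\|_{L^\infty_tL^2_x(\T^d)}=\|\phi\|_{L^2(\T^d)}$ and requires no transfer). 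Assembling these estimates yields the displayed periodic Strichartz bound, and hence the corollary.

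The routine parts are the application of the transference principle and the Sobolev bookkeeping for the cut-off data $\chi\phi$. The step demanding genuine care is the finite-speed-of-propagation comparison of the periodic and Euclidean wave flows on the fixed interval $[-1,1]$ --- one must ensure the light cones stay inside a fixed dilate of the fundamental domain --- together with the reduction from Klein--Gordon to wave at frequency $\sim N$; beyond these I expect no essential difficulty.
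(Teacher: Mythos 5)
Your plan is correct and follows essentially the same architecture as the paper: reduce via Lemma~\ref{lemma:TransferencePrinciple}/Remark~\ref{remark:TransferencePrinciple} to the frequency-localized periodic Strichartz bound on $[-1,1]$, pass from Klein--Gordon to the wave propagator, then transfer to $\R^d$ by cutting off the data and invoking finite speed of propagation together with Theorem~\ref{theorem:WaveStrichartzR^d}, and finish with Sobolev bookkeeping on the cut-off data. The one place where you diverge is the Klein--Gordon-to-wave reduction: you factor $e^{\mp it\langle\nabla\rangle}=m_t(\nabla)e^{\mp it|\nabla|}$ with $m_t(\xi)=e^{\mp it(\langle\xi\rangle-|\xi|)}$ and invoke a Mikhlin-type bound at scale $N$ on the torus, whereas the paper argues perturbatively: it estimates the difference
\begin{equation*}
\|(e^{\pm it\langle\nabla\rangle}-e^{\pm it|\nabla|})P_N\phi\|_{L^q_{[-1,1]}L^p_x}
\lesssim N^{\frac d2-\frac dp}\,\|(e^{\pm it\langle\nabla\rangle}-e^{\pm it|\nabla|})P_N\phi\|_{L^\infty_{[-1,1]}L^2_x}
\lesssim N^{\frac d2-\frac dp-1}\|P_N\phi\|_{L^2_x}
\end{equation*}
via Sobolev embedding, Plancherel, and the pointwise bound $|e^{\pm it\langle k\rangle}-e^{\pm it|k|}|\lesssim N^{-1}$ for $|t|\le1$, $|k|\sim N$, which is an acceptable error since $\frac1q\le1$. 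Both routes work; the paper's perturbative version is a bit more elementary, since it sidesteps the need to justify a torus multiplier theorem at scale $N$ (which, while standard via periodization of the Euclidean kernel, is an extra lemma you would need to state). Your handling of the low-frequency tails via the $\sigma-1>-d/2$ condition and the trivial endpoint $(q,p)=(\infty,2)$ matches the paper's treatment in spirit; the paper spells out the low-frequency Sobolev estimate for $\chi|\nabla|P_N\phi$ via an $L^\alpha$ intermediary, which you would need to reproduce to make that step airtight.
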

\begin{proof}
We give a proof for the sake of completeness although this property is folklore. 
Lemma~\ref{lemma:TransferencePrinciple} and \textit{Remark} \ref{remark:TransferencePrinciple} imply that it suffices to show

\begin{equation}
\label{est:CorollaryKGStrichartzT^d}
\|e^{\pm it  \langle \nabla \rangle} P_N \phi \|_{L_{t}^qL_{x}^p([-1,1] \times \T^d)} \lesssim N^{\frac{d}{2}- \frac{d}{p}-\frac1q} \|P_N \phi\|_{L_{x}^2(\T^d)}.
\end{equation}
We omit the proof of the trivial case $N=1$ and assume $N \geq 2$ hereafter. To see \eqref{est:CorollaryKGStrichartzT^d}, it is enough to prove
\begin{equation}
\label{est:CorollaryWaveStrichartzT^d}
\|e^{\pm it |\nabla |} P_N \phi \|_{L_{t}^qL_{x}^p([-1,1] \times \T^d)} \lesssim N^{\frac{d}{2}- \frac{d}{p}-\frac1q} \|P_N \phi\|_{L_{x}^2(\T^d)},
\end{equation}
where the propagator $e^{\pm it  \langle \nabla \rangle}$ in \eqref{est:CorollaryKGStrichartzT^d} is replaced with $e^{\pm it |\nabla |}$. 
Indeed, the Sobolev embedding and Plancherel's theorem yield
\begin{align*}
    & \|(e^{\pm it  \langle \nabla \rangle} - e^{\pm it |\nabla |} ) P_N \phi \|_{L_{t}^qL_{x}^p([-1,1] \times \T^d)} \\
    & \lesssim N^{\frac{d}{2} - \frac{d}{p}}\|(e^{\pm it  \langle \nabla \rangle} - e^{\pm it |\nabla |} ) P_N \phi \|_{L_{t}^{\infty}L_{x}^2([-1,1] \times \T^d)}\\
    & = N^{\frac{d}{2} - \frac{d}{p}} \sup_{t\in[-1,1]} \biggl( \sum_{k \in \Z^d} |(e^{\pm it \langle k \rangle} - e^{\pm it |k|}) \widehat{P_N \phi}(k)|^2 \biggr)^{\frac12}\\
    & \lesssim  N^{\frac{d}{2} - \frac{d}{p}-1} \|P_N \phi\|_{L_{x}^2(\T^d)}.
\end{align*}
Here we used the fact $| e^{\pm it \langle k \rangle} - e^{\pm it |k|}| \lesssim N^{-1}$ for all $(t,k) \in \R \times \Z^d$ such that $t \in [-1,1]$ and $|k| \sim N$.

We turn to the proof of \eqref{est:CorollaryWaveStrichartzT^d}. 
Because $N \geq 2$ and $e^{\pm it |\nabla|} = \cos (t |\nabla|) \pm i \sin (t|\nabla|) $, it suffices to show
\begin{equation}\label{est:WaveStrichartzT^d}
\begin{split}
\| & \mathcal{W} (t)(P_N f, P_N g) \|_{L_{t}^q L_{x}^p([-1,1] \times \T^d)} \\
& \lesssim N^{\frac{d}{2}- \frac{d}{p}-\frac1q} \|P_N f\|_{L_{x}^2(\T^d)} + N^{\frac{d}{2}- \frac{d}{p} - \frac1q -1}\| P_N g\|_{L_x^2(\T^d)}.
\end{split}
\end{equation}
Let $\psi \in \mathcal{S}(\R^d)$ satisfy $\psi \equiv 1$ on $[-10\pi, 10 \pi]^d$ and $\supp \psi \subset [- 20 \pi , 20 \pi]^d$. 
We define $(F_N,G_N) \in \mathcal{S}(\R^d) \times \mathcal{S}(\R^d)$ as $(F_N,G_N) = (\psi P_N f, \psi P_N g)$. 
Then, $\|F_N\|_{L^2(\R^d)} \sim \| P_N f \|_{L^2(\T^d)}$ and $\|G_N\|_{L^2(\R^d)} \sim \| P_N g \|_{L^2(\T^d)}$. 
In addition, because of the finite speed of propagation for a solution of the linear wave equation, 
for any $t \in [-1,1]$, it holds that $\mathcal{W}(t)(P_N f(x), P_N g(x)) = \mathcal{W}(t)(F_N(x), G_N(x))$ if $x \in [-\pi,\pi]^d$. Consequently, by using \eqref{est:WaveStrichartzR^d}, we obtain
\begin{align*}
&\|\mathcal{W}(t) (P_N f(x), P_N g(x))\|_{L_{t}^qL_{x}^p([-1,1] \times \T^d)}\\
& 
= \|\mathcal{W}(t) (F_N(x), G_N(x)) \|_{L_{t}^qL_{x}^p([-1,1] \times [-\pi, \pi]^d)}\\
& \lesssim  \| F_N \|_{\dot{H}^{\frac{d}{2}- \frac{d}{p}-\frac1q}(\R^d)} +\|  G_N\|_{\dot{H}^{\frac{d}{2}- \frac{d}{p} - \frac1q -1}(\R^d)}.
\end{align*}
Therefore, to see \eqref{est:WaveStrichartzT^d}, it suffices to prove
\begin{align}
\label{est:RevisionCorollary2.5-1}
& \| F_N \|_{\dot{H}^{\frac{d}{2}- \frac{d}{p}-\frac1q}(\R^d)} \lesssim N^{\frac{d}{2}- \frac{d}{p}-\frac1q} \|P_N f\|_{L^2(\T^d)},\\
\label{est:RevisionCorollary2.5-2}
&\|  G_N\|_{\dot{H}^{\frac{d}{2}- \frac{d}{p} - \frac1q -1}(\R^d)} \lesssim N^{\frac{d}{2}- \frac{d}{p}-\frac1q-1} \|P_N g\|_{L^2(\T^d)}.
\end{align}
Since $\frac{d}{2} - \frac{d}{p} - \frac{1}{q} \geq 0$, \eqref{est:RevisionCorollary2.5-1} follows from
\begin{equation}
\label{est:corollary1.9-1}
\sum_{M \in 2^{\N_0}} M^{\frac{d}{2} -\frac{d}{p} - \frac{1}{q}} \|P_{M} F_N \|_{L^2(\R^d)}  \lesssim 
N^{\frac{d}{2}- \frac{d}{p}-\frac1q} \|P_N f\|_{L^2(\T^d)}.
\end{equation}
Since $\|F_N\|_{L^2(\R^d)} \sim \| P_N f \|_{L^2(\T^d)}$, we only need to treat the cases $M \ll N$ and $M \gg N$. 
For $k \in\Z^d$, let $\widehat{\phi}(k)$ be the $k$-th Fourier coefficient of $\phi$. We may write
\[
\F_{x} F_N (\xi)= \sum_{k \in \Z^d} \widehat{\psi}(\xi-k) \eta_N(|k|) \widehat{f}(k).
\]
We note that $|\widehat{\psi}(\xi-k)| \lesssim \langle \xi-k \rangle^{- J}$ for any $J >0$. Then, it is observed that
\begin{align*}
& \sum_{M \ll N} M^{\frac{d}{2} -\frac{d}{p} - \frac{1}{q}} \|P_{M} F_N \|_{L^2(\R^d)}\\
& = \sum_{M \ll N} M^{\frac{d}{2} -\frac{d}{p} - \frac{1}{q}} \Bigl\| \eta_{M} \sum_{k \in \Z^d} \widehat{\psi}(\cdot-k) \eta_N(|k|) \widehat{f}(k) \Bigr\|_{L^2(\R^d)}\\
& \lesssim \sum_{M \ll N} M^{\frac{d}{2} -\frac{d}{p} - \frac{1}{q}} N^{- J}\|\eta_{M}\|_{L^2(\R^d)}  \sum_{k \in \Z^d}  |\eta_N(|k|)|  |\widehat{f}(k)|  \\
& \lesssim \sum_{M \ll N}M^{d -\frac{d}{p} - \frac{1}{q}} N^{-J+ \frac12 d} \|P_N f\|_{L^2(\T^d)}\\
&\lesssim \|P_N f\|_{L^2(\T^d)}.
\end{align*}
This completes the proof of \eqref{est:corollary1.9-1} in the case $M \ll N$. 
The case $M \gg N$ can be handled in a similar way.

Lastly, we prove \eqref{est:RevisionCorollary2.5-2}. Notice that $\frac{d}{2} - \frac{d}{p} - \frac{1}{q} -1 \leq 0$. It suffices to show
\[
\begin{split}
 \|P_1 G_N\|_{\dot{H}^{\frac{d}{2} - \frac{d}{p} - \frac{1}{q} -1}} & + \sum_{M \in 2^{\N}}  M^{\frac{d}{2} -\frac{d}{p} - \frac{1}{q}-1} \|P_{M} G_N \|_{L^2(\R^d)}\\
 & \lesssim 
N^{\frac{d}{2}- \frac{d}{p}-\frac1q-1} \|P_N g\|_{L^2(\T^d)}.
\end{split}
\]
The latter term can be handled in the same way as for \eqref{est:corollary1.9-1}. For the former term, the Sobolev embedding gives
\[
\|P_1 G_N\|_{\dot{H}^{\frac{d}{2} - \frac{d}{p} - \frac{1}{q} -1}(\R^d)} 
\lesssim \| P_1 G_N\|_{L^{\alpha}(\R^d)},
\]
where $\frac12 \leq \frac{1}{\alpha} := \frac{1}{p} + \frac{1}{qd} + \frac{1}{d} <1$\footnote{Strictly speaking, this embedding does not hold if $(d,q,p)=(2,\infty,2)$. In this case, however, the claim \eqref{est:StrichartzW} follows from the $L^2$ conservation and Lemma~\ref{lemma:TransferencePrinciple}.}. 
In the same way as above, for any $J >0$, it holds that
\begin{equation}
\label{est:RevisionCorollary2.5-3}
 \| P_1 G_N\|_{L^{2}(\R^d)} \lesssim N^{-J} \|P_N g\|_{L^2(\T^d)}.
\end{equation}
In addition, it is easy to see that
\begin{equation}
\label{est:RevisionCorollary2.5-4}
\| P_1 G_N\|_{L^{1}(\R^d)} \lesssim \| \psi P_N g\|_{L^1(\R^d)} \lesssim \|P_N g\|_{L^2(\T^d)}.
\end{equation}
It follows from \eqref{est:RevisionCorollary2.5-3} and \eqref{est:RevisionCorollary2.5-4} that 
\[
\| P_1 G_N\|_{L^{\alpha}(\R^d)} \lesssim N^{\frac{d}{2}- \frac{d}{p}-\frac1q-1} \|P_N g\|_{L^2(\T^d)}.
\]
This completes the proof of \eqref{est:RevisionCorollary2.5-2}.
\end{proof}

We can also derive fattened version of Theorem \ref{theorem:StrichartzShell} by employing Lemma \ref{lemma:TransferencePrinciple} (see also \textit{Remark} \ref{remark:TransferencePrinciple}), we have the following corollary. 
\begin{corollary}\label{corollary:StrichartzShell}
Let $d \geq 2$, $L \in 2^{\N_0}$, $N_1$, $N_2 \in 2^{\N_0}$ satisfy $N_2 \leq N_1$, and $p =\frac{2(d+1)}{d-1}$. 
Then we have
\begin{equation}\label{est:StrichartzSS}
\|u\|_{L_{t,x}^p} \lesssim_\varepsilon L^{\frac12}N_2^{\varepsilon}\|u\|_{L_{t,x}^2}
\end{equation}
for any $\varepsilon>0$ and $u \in L^2(\R \times \T^d)$ such that 
\[
\supp \widetilde{u} \subset \{(\tau,k) \in \R \times \Z^d \, : \,  |\tau- \tau_0-|k|^2| \lesssim L, \ c_* - 1  \leq |k| \leq c_* + 1, \ k \in B_{N_2}\},
\]
where $\tau_0 \in \R$, $c_* \sim N_1$ and $B_{N_2} \subset \R^d$ is a ball of radius $N_2$ with arbitrary centre. 
\end{corollary}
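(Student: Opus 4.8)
The plan is to deduce Corollary \ref{corollary:StrichartzShell} from Theorem \ref{theorem:StrichartzShell} by the same transference mechanism already packaged in Lemma \ref{lemma:TransferencePrinciple} together with \textit{Remark} \ref{remark:TransferencePrinciple}. The first step is to observe that the hypothesis on $\supp\widetilde u$ matches exactly the shape required by \textit{Remark} \ref{remark:TransferencePrinciple}: writing $U(t)=e^{it\Delta}$ so that $\Phi_U(k)=|k|^2$, the set $\{(\tau,k):|\tau-\tau_0-|k|^2|\lesssim L,\ c_*-1\le|k|\le c_*+1,\ k\in B_{N_2}\}$ is precisely a modulated, frequency-shell-restricted, ball-restricted version of the sets handled there. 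So $u$ plays the role of $u_{N,L,\tau_0}$ with the additional geometric constraints $c_*-1\le|k|\le c_*+1$ and $k\in B_{N_2}$ carried along.

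Next I would verify that the linear estimate feeding the transference principle is available with $\alpha=\varepsilon$: this is exactly Theorem \ref{theorem:StrichartzShell} with $p=\frac{2(d+1)}{d-1}$, which gives $\|e^{it\Delta}\phi\|_{L^p_{t,x}(\mathbb{T}^{d+1})}\lesssim_\varepsilon N_2^\varepsilon\|\phi\|_{L^2_x(\mathbb{T}^d)}$ for $\phi$ satisfying \eqref{e:AssumpStri}, i.e.\ $\supp\hat\phi\subset\{c_*-1\le|k|\le c_*+1\}\cap B_{N_2}$. The only point needing care is that Lemma \ref{lemma:TransferencePrinciple} is stated with a fixed Littlewood--Paley block $P_N$, whereas here the relevant frequency localization is the shell $\cap$ ball combination rather than a single dyadic annulus; but the proof of Lemma \ref{lemma:TransferencePrinciple} never uses any property of $P_N$ beyond the fact that it is a Fourier multiplier commuting with $U(t)$ and that the input linear estimate holds for functions with that frequency support. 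Thus one simply reruns the proof of Lemma \ref{lemma:TransferencePrinciple} verbatim with $P_N$ replaced by the multiplier projecting onto $\{c_*-1\le|k|\le c_*+1\}\cap B_{N_2}$, $U(t)=e^{it\Delta}$, $q=r=p=\frac{2(d+1)}{d-1}$, $\alpha=\varepsilon$, and the input estimate taken to be Theorem \ref{theorem:StrichartzShell}; the partition-of-unity-in-time argument with the $\psi_m$'s and the $\ell^{q/2}_m$ Minkowski inequality goes through unchanged since $q=p\ge2$. Finally, the modulation by $e^{it\tau_0}$ is absorbed exactly as in \textit{Remark} \ref{remark:TransferencePrinciple}, using that $\F_{t,x}Q_L^S(e^{it\tau_0}u)=\eta_L(\tau-\tau_0+|k|^2)\widetilde u$, which is why the shifted support set $|\tau-\tau_0-|k|^2|\lesssim L$ is the natural one; the output is then $\|u\|_{L^p_{t,x}}\lesssim L^{1/2}N_2^\varepsilon\|u\|_{L^2_{t,x}}$, which is \eqref{est:StrichartzSS}.

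I do not anticipate a genuine obstacle here; the corollary is a mechanical consequence of results already in hand. The mildly delicate point is purely bookkeeping: making sure that the frequency projection onto the shell-intersect-ball region really does commute with $e^{it\Delta}$ and with the modulation operators, and that the constant in the transference does not secretly depend on $c_*$, $N_1$, or the centre of $B_{N_2}$ — it does not, because translation invariance of Lebesgue measure on $\mathbb{R}$ (the only place translation is used) is insensitive to these parameters, and the frequency-side constraints only shrink the support. One can either spell this out by a one-line ``repeat the proof of Lemma \ref{lemma:TransferencePrinciple}'' remark, or state and prove a slightly more general transference lemma upfront allowing an arbitrary frequency multiplier; given that \textit{Remark} \ref{remark:TransferencePrinciple} already does most of this, the former is cleaner. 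Hence the proof reduces to: (i) cite \textit{Remark} \ref{remark:TransferencePrinciple} with $U=e^{it\Delta}$; (ii) note that the additional constraints $c_*-1\le|k|\le c_*+1$ and $k\in B_{N_2}$ are preserved by all operations in that proof; (iii) invoke Theorem \ref{theorem:StrichartzShell} as the required linear input with exponent $N_2^\varepsilon$; (iv) read off \eqref{est:StrichartzSS}.
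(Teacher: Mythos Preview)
Your proposal is correct and follows exactly the approach the paper indicates: the paper simply states that Corollary \ref{corollary:StrichartzShell} follows from Theorem \ref{theorem:StrichartzShell} by employing Lemma \ref{lemma:TransferencePrinciple} together with \textit{Remark} \ref{remark:TransferencePrinciple}, without writing out a separate proof. Your extra care in noting that the shell-intersect-ball frequency projection can replace $P_N$ in the transference argument (since the proof only uses that it is a Fourier multiplier commuting with $e^{it\Delta}$) fills in precisely the bookkeeping the paper leaves implicit.
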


\section{Decoupling theory on the shell: Proofs of Theorem \ref{theorem:ShellDecoupling} and  Theorem \ref{theorem:StrichartzShell}}\label{section:DecouplingTheory}
\subsection{Decoupling inequality for the Fourier extension operator}
At this stage, it is worth to introduce the Fourier extension operator defined by 
$$
E_{\mathbb{P}^d} g(x,t) 
:= 
\int_{\mathbb{R}^d} 
e^{i( x\cdot \xi + t|\xi|^2)} g(\xi)\, d\xi,
\;\;\;
(x,t) \in \mathbb{R}^{d+1}
$$
for ${\rm supp}\, g \subset \{ \xi\in \mathbb{R}^d: |\xi|\le 2 \}$. 
In the forthcoming proof, we will use both $E_{\mathbb{P}^d} $ and $E_{\mathbb{P}^{d-1}}$. 
As one might expect, there exists an analogue of the decoupling inequality for $E_{\mathbb{P}^d}$. 
To state it, we introduce the notion of caps in this language. Let $\mathcal{C}_{N^{-1}}$ be a family of disjoint congruent cubes in $\mathbb{R}^d$ of the form 
$$
\theta = \Bigl[ - \frac1{2N}, \frac1{2N} \Bigr]^d + c_\theta
$$
where $c_\theta$ runs over $\frac1N \mathbb{Z}^d \cap [-1,1]^d$. 
This is slightly abusing notation compared to \eqref{e:DefCap} but we remark that if we write $\Phi(\xi) = (\xi, |\xi|^2)$, then $N^{-2}$-neighborhood of $\Phi(\theta)$ corresponds to the $\frac1N\times\cdots \times \frac1N\times \frac1{N^2}$ cap  introduced in \eqref{e:DefCap}. 
We also need to introduce a nice weight function which is nowadays a  common technique in order to justify the localisation argument. 
For a convex body $K \subset \mathbb{R}^{d+1}$, we use a weight function $w_K: \mathbb{R}^{d+1} \to \mathbb{R}_+$ that satisfies the following properties: (i) $\1_K \lesssim w_K$ and $w_K$ decays rapidly\footnote{We often choose $K$ as a ball in $\mathbb{R}^{d+1}$ with large radius $R$ and arbitrary centre $c$ in which case this condition means $w_B(t,x) \lesssim (1+| (x-c)/R |)^{-M} $ for sufficiently large $M$. } away from $K$ (ii) if convex bodies $K_1,\ldots, K_i, \ldots$ are almost disjoint and $\bigcup_{i=1}^\infty K_i = \mathbb{R}^{d+1}$ then 
\begin{equation}\label{e:WeightProperty1}
\sum_{i=1}^\infty w_{K_i} \lesssim 1. 
\end{equation} 
Also remark that $w_K(\mathbb{R}^{d+1}) := \int_{\R^{d+1}} w_K  \sim |K|$. 
We refer to \cite{BourDem16,FSWW} for the details on such weight function. 
In below the weight function $w_K$ satisfying the two properties might change from line to line or from left hand side of inequality to right hand side and we will not track the precise form of $w_k$.  
Then the (local) $\ell^2$-decoupling inequality for $E_{\mathbb{P}^d}$ states that 
\begin{equation}\label{e:DecoupLocal}
\| E_{\mathbb{P}^d} g \|_{L^p(B_{N^2})}
\lesssim_\varepsilon 
N^\varepsilon 
\big(
\sum_{\theta \in \mathcal{C}_{N^{-1}}} 
\| E_{\mathbb{P}^d} g_\theta \|_{L^p(w_{B_{N^2}})}^2
\big)^\frac12, 
\end{equation}
where $g_\theta:= \chi_\theta g$, $B_{N^2}$ is a ball in $\mathbb{R}^{d+1}$ radius $N^2$ and arbitrary centre, and 
$$
\| f \|_{L^p(w_{B_{N^2}})}
:=
\big(
\int_{\mathbb{R}^{d+1}}
|f(x,t)|^p w_{B_{N^2}}(x,t)\, dxdt
\big)^\frac1p.
$$
In fact, \eqref{e:DecoupLocal} directly follows from \eqref{e:DecoupFat} for the same $p$ since $ f = E_{\mathbb{P}^d} g \cdot w_{B_{N^2}}$ has a Fourier support on $\mathcal{N}_{N^{-2}}(\mathbb{P}^d)$, see the inequality (7) in \cite{BourDem15} for more details. 
Hence, as a corollary from Theorem \ref{t:BD15}, we have \eqref{e:DecoupLocal} for all $2\le p \le \frac{2(d+2)}{d}$. 
Contrary as is stated in \cite[Remark 5.2]{BourDem15} one can also recover \eqref{e:DecoupFat} from \eqref{e:DecoupLocal}, see also \cite[Theorem 5.1]{BourDem16} for the proof of it.  
Therefore, it suffices to show the following to prove Theorem \ref{theorem:ShellDecoupling}.  
	
\begin{theorem}\label{cl:ShellDecoup}
	Let $d\ge2$, $N\ge1$, $p = \frac{2(d+1)}{d-1}$ and take arbitrary $ d_* \in [1,2]$. 
	Then \eqref{e:DecoupLocal} holds for all $\varepsilon>0$ and all $g:\mathbb{R}^d \to \mathbb{C}$ satisfying 
	\begin{equation}\label{e:Assumpg}
	{\rm supp}\, g \subset \Bigl\{ \xi \in \mathbb{R}^d: d_* - \frac1N \le |\xi| \le d_* + \frac1{N} \Bigr\}. 
	\end{equation}
\end{theorem}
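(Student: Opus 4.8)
The plan is to deduce Theorem \ref{cl:ShellDecoup} from the Bourgain--Demeter decoupling inequality (Theorem \ref{t:BD15}) \emph{one dimension lower}, i.e.\ for $\mathbb{P}^{d-1}$, whose sharp range is $2\le p\le\frac{2((d-1)+2)}{d-1}=\frac{2(d+1)}{d-1}$ --- exactly the range asserted here, and this coincidence is the heart of the matter. The underlying geometric reason is that the shell \eqref{e:Assumpg} costs the paraboloid one curved direction: along the radial direction the support of $g$ has width only $N^{-1}$, and on such a scale the radial profile $r\mapsto r^2$ is affine up to an error $O(N^{-2})$, that is, within the $N^{-2}$ thickness of a single cap in \eqref{e:DefCap}. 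Thus, up to perturbations that are invisible at the scales entering \eqref{e:DecoupLocal}, the relevant slab of $\mathcal{N}_{N^{-2}}(\mathbb{P}^d)$ is (affinely equivalent to) a \emph{cylinder over a $(d-1)$-dimensional spherical cap}, which decouples like $\mathbb{P}^{d-1}$. (When $d=2$ the required input is the classical $L^6$ decoupling for the parabola $\mathbb{P}^1$.)

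First I would localize. Both sides of \eqref{e:DecoupLocal} are covariant under rotations $\xi\mapsto O\xi$ in frequency (implemented by $x\mapsto Ox$ on the spatial side), at the harmless cost of replacing the cube-caps $\mathcal{C}_{N^{-1}}$ by rotated-cube-caps; since decoupling into $N^{-1}$-cubes and into $N^{-1}$-balls are equivalent up to constants and an admissible adjustment of $N$, this causes no loss. Combining a rotation with a finite partition of unity on $\mathbb{S}^{d-1}$, I may assume $\supp g$ lies in the polar cap of the shell near $d_*e_d$: writing $\xi=(\xi',\xi_d)$, one has $|\xi'|\le c\,d_*$ with $c$ a small absolute constant, $\xi_d=\sqrt{d_*^2-|\xi'|^2+\rho}$ with $\rho:=|\xi|^2-d_*^2\in[-C/N,C/N]$, and on this region $\xi_d=\psi(\xi')+O(N^{-1})$, where $\psi(\xi'):=\sqrt{d_*^2-|\xi'|^2}$ has negative definite Hessian, uniformly for $d_*\in[1,2]$. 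Changing variables from $\xi$ to $(\xi',\rho)$ (Jacobian $\sim1$) makes $|\xi|^2=d_*^2+\rho$ identically, so that
\[
E_{\mathbb{P}^d}g(x,t)=e^{itd_*^2}\int_{|\rho|\le C/N}e^{it\rho}\,E_{\Sigma_\rho}G_\rho(x',x_d)\,d\rho,
\]
where $\Sigma_\rho$ is the graph of $\xi'\mapsto\sqrt{d_*^2+\rho-|\xi'|^2}$ (a spherical cap of radius $\approx d_*$, hence uniformly non-degenerate and, for decoupling purposes, a copy of $\mathbb{P}^{d-1}$), and $G_\rho$ is $g$ restricted to the fibre over $\rho$ times the Jacobian. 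In the coordinates $(\xi',\xi_d,\tau)$ this realizes the piece of $\mathcal{N}_{N^{-2}}(\mathbb{P}^d)$ under consideration as, up to an $O(N^{-2})$ distortion in $\tau$ and an $O(N^{-1})$ spreading (both staying within one cap), the cylinder $\{(\xi',\eta_d,\tau):\xi'\in B^{d-1},\ |\eta_d-\psi(\xi')|\lesssim N^{-1},\ |\tau-d_*^2|\lesssim N^{-1}\}$, with the caps $\theta\in\mathcal{C}_{N^{-1}}$ meeting it corresponding (bijectively, up to $O(1)$ overlap) to the $N^{-1}$-caps $\{\theta'\}$ of $\Sigma_0$: the radial and $\tau$ directions are only ``one cap deep''.

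The proof would then be completed by lower-dimensional decoupling plus Fubini. For each fixed $\rho$, the function $(x',x_d)\mapsto E_{\Sigma_\rho}G_\rho(x',x_d)$ has frequency support on the non-degenerate hypersurface $\Sigma_\rho\subset\mathbb{R}^d$, so Theorem \ref{t:BD15} with $d$ replaced by $d-1$ gives, for $p=\frac{2(d+1)}{d-1}$,
\[
\big\|E_{\Sigma_\rho}G_\rho\big\|_{L^p(B_{N^2})}\lesssim_\varepsilon N^\varepsilon\Big(\sum_{\theta'}\big\|E_{\Sigma_\rho}(\chi_{\theta'}G_\rho)\big\|_{L^p(w_{B_{N^2}})}^2\Big)^{1/2}.
\]
Feeding this into the displayed formula for $E_{\mathbb{P}^d}g$ --- using that the $\rho$-support has length $\sim N^{-1}$, so that Hausdorff--Young in $t$ (equivalently, local constancy at the dual scale $N$) together with H\"older converts the $\rho$-integral into an $\ell^2$-friendly average without losing powers of $N$, Minkowski's inequality in $\ell^2_{\theta'}$ versus $L^p$ (valid since $p\ge2$), and the elementary identity $E_{\mathbb{P}^d}(\chi_{\theta'}g)(x,t)=e^{itd_*^2}\int e^{it\rho}E_{\Sigma_\rho}(\chi_{\theta'}G_\rho)\,d\rho$ --- one arrives at
\[
\|E_{\mathbb{P}^d}g\|_{L^p(B_{N^2})}\lesssim_\varepsilon N^\varepsilon\Big(\sum_{\theta'}\|E_{\mathbb{P}^d}g_{\theta'}\|_{L^p(w_{B_{N^2}})}^2\Big)^{1/2},
\]
and transferring $\{\theta'\}$ back to the original caps (each essentially an $N^{-1}$-cube, combining with bounded multiplicity) gives \eqref{e:DecoupLocal} under \eqref{e:Assumpg} at the endpoint $p=\frac{2(d+1)}{d-1}$. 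The full range $2\le p\le\frac{2(d+1)}{d-1}$ follows by interpolation with the trivial $L^2$ case (an equality by Plancherel), and the asserted sharpness is the standard example for $\mathbb{P}^{d-1}$-decoupling transplanted onto the shell.

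The step I expect to be the main obstacle is making the ``cylinder over $\mathbb{P}^{d-1}$'' reduction rigorous. The caps $\theta\in\mathcal{C}_{N^{-1}}$ are axis-parallel $N^{-1}$-cubes and are therefore \emph{sheared and tilted} relative to the coordinates $(\xi',\rho)$ adapted to the shell; one must verify that this shear --- together with the $O(N^{-2})$ deviation of $r\mapsto r^2$ from affine, the $O(N^{-1})$ variation of the fibre surface $\Sigma_\rho$ with $\rho$, the (small but) $O(1)$ variation of the fibre direction over the polar cap, and the passage from sharp cutoffs to the rapidly decaying weights $w_{B_{N^2}}$ --- costs at most a constant or an extra $N^\varepsilon$, e.g.\ by pigeonholing over boundedly many sheared sub-families, as is standard in this circle of ideas. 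The one point beyond the routine cylinder argument is the entanglement of the radial/$\tau$ variable $\rho$ with the physical time $t$ through the factor $e^{it\rho}$: to reduce exactly to the fixed-$\rho$ decoupling displayed above without losing powers of $N$ one uses that $|\rho|\lesssim N^{-1}$ while the output is measured on the ball $B_{N^2}$ of dual radius $N^{-2}\ll N^{-1}$ --- i.e.\ the thin $\rho$-slab is again ``one cap deep'' and can be absorbed by Hausdorff--Young/local-constancy in $t$ combined with H\"older in $\rho$. A secondary, routine point is the uniformity of all implied constants for $d_*\in[1,2]$, which follows from the uniform bounds on the geometry of the caps $\Sigma_\rho$.
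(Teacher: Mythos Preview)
Your core geometric insight is correct and is the same as the paper's: the shell constraint \eqref{e:Assumpg} costs one curved direction, so the matter should reduce to Bourgain--Demeter for $\mathbb{P}^{d-1}$ at the matching endpoint $p=\frac{2(d+1)}{d-1}$. The gap is in your implementation of the $\rho$-foliation. You write $E_{\mathbb{P}^d}g(x,t)=e^{itd_*^2}\int_{|\rho|\lesssim N^{-1}}e^{it\rho}E_{\Sigma_\rho}G_\rho(x)\,d\rho$ and then claim that the factor $e^{it\rho}$ can be absorbed because ``the thin $\rho$-slab is again `one cap deep'\,'', invoking the comparison $N^{-2}\ll N^{-1}$. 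This has the inequality exactly backwards: the $\rho$-support has length $\sim N^{-1}$ while the dual scale to $B_{N^2}$ is $N^{-2}$, so the $\rho$-slab is $\sim N$ caps deep in the $\tau$-direction, and the phase $e^{it\rho}$ oscillates $\sim N$ times over $|t|\le N^2$. Neither Hausdorff--Young nor local constancy removes this; in particular, after you apply Hausdorff--Young in the forward direction and decouple each fibre, there is no lossless reverse step that reconstitutes $\|E_{\mathbb{P}^d}g_{\theta'}\|_{L^p(w_{B_{N^2}})}$ from the $L^{p'}_\rho$-average of $\|E_{\Sigma_\rho}(\chi_{\theta'}G_\rho)\|_{L^p_x}$.

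The paper confronts precisely this mismatch (see the remark at the end of the heuristic subsection: ``the width of our plate is $K^{-1}$ whilst the physical scale is $K^2$ \ldots\ one requires some non-trivial argument'') and resolves it not by a one-shot Fubini but by an \emph{induction on scales}. One defines $\mathcal{D}(K)$ and proves $\mathcal{D}(K)\le C_\varepsilon K^\varepsilon \mathcal{D}(K^{1/2})$: first decouple into $K^{-1/2}$-caps $\nu$ via the inductive hypothesis, then observe that under \eqref{e:Assumpg} each $\nu$ is a $K^{-1}\times(K^{-1/2})^{d-1}$ plate, and finally decouple each such plate from scale $K^{-1/2}$ to $K^{-1}$ by lower-dimensional Bourgain--Demeter. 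In this last step, after normalising the thin direction to $\xi_1\in[-K^{-1},K^{-1}]$ and slicing the physical ball into $K$-thick slabs in $x_1$, the relevant phase is $e^{i(x_1\xi_1+t\xi_1^2)}$ with $|x_1|\le K$, $|t|\le K^2$, $|\xi_1|\le K^{-1}$, hence $|x_1\xi_1|\le 1$ and $|t\xi_1^2|\le 1$; only now is the oscillation genuinely bounded and the locally-constant heuristic justifiable (the paper does this rigorously via Taylor expansion). Iterating $\mathcal{D}(K)\le C_\varepsilon K^\varepsilon\mathcal{D}(K^{1/2})$ down to $K\sim O(1)$ gives $\mathcal{D}(N)\lesssim_\varepsilon N^\varepsilon$. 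Your direct approach tries to jump from scale $1$ to $N^{-1}$ in a single step, which is exactly where the phase blows up; the induction is not a technicality but the mechanism that keeps the scales matched.
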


Below, we prove Theorem \ref{cl:ShellDecoup}. Our proof is based on the induction on scale argument which is now common technique in the Fourier restriction theory.  More precisely, we will reduce the matter to the lower dimensional decoupling inequality by exploiting the assumption \eqref{e:Assumpg} together with appropriate induction scheme. 
	In fact our strategy of the proof consists of basically three steps. First, after the decomposition of the physical space into small scale, we give a simple observation exploiting the assumption \eqref{e:Assumpg} and then  approximate each small pieces of shell by flat plates.  This enables us to appeal to the one dimension less Bourgain--Demeter's decoupling estimate at each small scale.  We then put together each estimate in a tight way by using the induction hypothesis.  To conclude the proof,  we do this procedure inductively. 
    We note that the idea of appealing to the lower dimensional estimate can be found in Bourgain--Guth's argument \cite{BourGuth11} for instance. We also mention the proof of the decoupling inequality for the cone \cite{BourDem15}, and for the moment curve \cite{BDG}. In addition,  such induction scheme,  namely,  applying something known at each small scale and then appealing to the induction hypothesis to make it global,  can be found in for instance Guth's simple proof of the non-endpoint multilinear Kakeya inequality \cite{Guth15}.  
	Also an idea of approximating nonlinear object by linear one via induction on scale argument can be found in the context of nonlinear Brascamp--Lieb inequalities, see \cite{BB,BBBCF,BBFS}. 
	
    Let us provide detailed argument.  Fix large $N\ge1$ and for each $1\le K\le N$, let $\mathcal{D}(K)$ be the best constant for the inequality 
	$$
	\| E_{\mathbb{P}^{d} } g \|_{L^p(B_{K^2})}
	\le 
	\mathcal{D}(K)
	\big(
	\sum_{\theta \in \mathcal{C}_{K^{-1}}} 
	\| E_{\mathbb{P}^{d} } g_\theta  \|_{L^p( w_{B_{K^2}} )}^2 
	\big)^\frac12 
	$$
	for all $g$ satisfying \eqref{e:Assumpg} and all $B_{K^2}$ whose radius $K^2$ and arbitrary centre. 
	From here and below in this section we always assume $ p = \frac{2(d+1)}{d-1}$.
	Our goal \eqref{e:DecoupLocal} can be read as $\mathcal{D}(N) \lesssim_\varepsilon N^\varepsilon$. 
	The key of the proof is the following. 
	\begin{proposition}\label{p:Induction}
	For any $\varepsilon>0$, there exists $C_\varepsilon>0$ such that 
	\begin{equation}\label{e:Goal2June}
		\mathcal{D}(K)\le C_\varepsilon K^\varepsilon \mathcal{D}(K^{\frac12})
	\end{equation}
	holds for all $K\in [1,N]$.
	\end{proposition}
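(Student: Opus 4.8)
The plan is to establish the bootstrapping inequality \eqref{e:Goal2June} by performing the decoupling in two stages at the intermediate scale $K$ and then reassembling. As a first reduction, since $p=\frac{2(d+1)}{d-1}\ge2$ it suffices to prove the local estimate
\[
\|E_{\mathbb{P}^d}g\|_{L^p(B)}\lesssim_\varepsilon K^{\varepsilon}\,\mathcal{D}(K^{1/2})\Big(\sum_{\theta\in\mathcal{C}_{K^{-1}}}\|E_{\mathbb{P}^d}g_\theta\|_{L^p(w_B)}^{2}\Big)^{1/2}
\]
for every ball $B$ of radius $K$ and every $g$ satisfying \eqref{e:Assumpg}: covering $B_{K^2}$ by such balls $B$, applying this estimate on each one, and then invoking Minkowski's inequality in the form $\ell^p_B(\ell^2_\theta)\hookrightarrow\ell^2_\theta(\ell^p_B)$ (valid for $p\ge2$) together with $\sum_B w_B\lesssim w_{B_{K^2}}$ recovers \eqref{e:Goal2June}.

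\textbf{Stage 1: the shell hypothesis reduces the matter to one lower dimension.} Partition the shell $\{\,d_*-\tfrac1N\le|\xi|\le d_*+\tfrac1N\,\}$ into angular sectors $\{\nu\}$ of aperture $\sim K^{-1/2}$ and write $g=\sum_\nu g_\nu$. Here is where the hypothesis $K\le N$ enters: the radial thickness $N^{-1}$ is $\le K^{-1}$, while over the tangential extent $K^{-1/2}$ the sphere $\{|\xi|=d_*\}$ departs from its tangent hyperplane by only $\sim K^{-1}$; hence each $g_\nu$ is supported in a \emph{flat plate}, a box of dimensions $\sim K^{-1/2}\times\cdots\times K^{-1/2}\times K^{-1}$, and after lifting by $\xi\mapsto(\xi,|\xi|^2)$ the frequency support of $E_{\mathbb{P}^d}g_\nu$ lies in a $K^{-1}$-slab of $\mathbb{R}^{d+1}$ whose normal $\sim(-2\xi_\nu,1)$ is essentially constant on $\nu$. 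On a ball of radius $K$ these $K^{-1}$-slabs are, by the uncertainty principle, at the finest resolvable scale, and their normals, as $\xi_\nu$ ranges over the sphere, form a piece of a $(d-1)$-sphere — equivalently, after a fixed affine change of variables, a truncated $(d-1)$-dimensional paraboloid, seen at scale $K^{1/2}$. Thus decoupling $E_{\mathbb{P}^d}g=\sum_\nu E_{\mathbb{P}^d}g_\nu$ on $B$ into the plates $\nu$ is exactly an instance of $\ell^2$-decoupling for $\mathbb{P}^{d-1}$ into $K^{-1/2}$-caps (in direct analogy with the way decoupling for the light cone reduces to decoupling for $\mathbb{P}^{d-1}$); since $p=\frac{2(d+1)}{d-1}=\frac{2((d-1)+2)}{d-1}$ is precisely the endpoint exponent of Theorem \ref{t:BD15} in dimension $d-1$, this costs only $C_\varepsilon K^{\varepsilon}$:
\[
\|E_{\mathbb{P}^d}g\|_{L^p(B)}\lesssim_\varepsilon K^{\varepsilon}\Big(\sum_{\nu}\|E_{\mathbb{P}^d}g_\nu\|_{L^p(w_B)}^{2}\Big)^{1/2}.
\]

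\textbf{Stage 2: parabolic rescaling inside a plate, then the induction hypothesis.} Fix a sector $\nu$; its $\xi$-diameter is $\sim K^{-1/2}$, so the parabolic rescaling $\xi\mapsto K^{1/2}(\xi-\xi_\nu)$ (together with the associated shear in the $\tau$-variable) maps $\mathbb{P}^d$ over $\nu$ to $\mathbb{P}^d$ over a set of unit size, sends the caps of $\mathcal{C}_{K^{-1}}$ contained in $\nu$ to caps of $\mathcal{C}_{K^{-1/2}}$, and sends $B$ (radius $K$) into a ball of radius $\lesssim(K^{1/2})^2$; once again because $K\le N$, the image of the shell is still a shell-type set to which the definition of $\mathcal{D}(K^{1/2})$ applies. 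Undoing the rescaling yields
\[
\|E_{\mathbb{P}^d}g_\nu\|_{L^p(w_B)}\lesssim \mathcal{D}(K^{1/2})\Big(\sum_{\theta\in\mathcal{C}_{K^{-1}},\ \theta\subset\nu}\|E_{\mathbb{P}^d}g_\theta\|_{L^p(w_B)}^{2}\Big)^{1/2}.
\]
Combining this with Stage 1 and using that $\{\nu\}$ partitions the shell while the caps $\theta$ partition accordingly gives the local estimate above, hence \eqref{e:Goal2June}. (Iterating \eqref{e:Goal2June} from $K=N$ down to scale $O(1)$, where $\mathcal{D}(O(1))=O(1)$ trivially, then produces $\mathcal{D}(N)\lesssim_\varepsilon N^{\varepsilon}$, which is Theorem \ref{cl:ShellDecoup}.)

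The main obstacle is Stage 1, namely making rigorous the identification ``decoupling into flat plates $\equiv$ $(d-1)$-dimensional paraboloid decoupling at scale $K^{1/2}$''. Concretely this requires choosing the right coordinates — for instance slicing in the radial variable $|\xi|$, on which $\tau=|\xi|^2$ depends whereas the tangential variables do not, so that each slice is a piece of a lower-dimensional sphere lying in a horizontal hyperplane — linearising the sphere by plates with an error that stays harmless on a ball of radius $K$, and carrying the rapidly-decaying weights $w_B$ and the $N^{\varepsilon}$-losses through the whole scheme. A secondary technical point is the stability of the shell hypothesis \eqref{e:Assumpg} under the sectorial decomposition and under the parabolic rescaling, which is precisely where the constraint $K\le N$ is used in both stages.
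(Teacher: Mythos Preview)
Your two-stage scheme identifies the right two ingredients—an application of the induction hypothesis $\mathcal{D}(K^{1/2})$ and a reduction to Bourgain--Demeter for $\mathbb{P}^{d-1}$ exploiting the thinness of the shell—but you have them in the wrong order, and this swap creates a genuine gap in Stage~2.

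The problem is your claim that ``the image of the shell is still a shell-type set to which the definition of $\mathcal{D}(K^{1/2})$ applies''. Under the parabolic rescaling $\eta=K^{1/2}(\xi-\xi_\nu)$, the sphere $|\xi|=d_*$ becomes the sphere $|\eta+K^{1/2}\xi_\nu|=K^{1/2}d_*$, i.e.\ a sphere of radius $\sim K^{1/2}$ centred at $-K^{1/2}\xi_\nu$. Intersected with the unit $\eta$-ball this is essentially a \emph{flat plate}, not a shell $\{d_*'-1/N\le|\eta|\le d_*'+1/N\}$ with $d_*'\in[1,2]$. Concretely (take $d=2$, $\xi_\nu=e_1$): the rescaled support is $\{|\eta_1|\lesssim K^{-1/2},\ |\eta_2|\lesssim1\}$, on which $|\eta|$ ranges over all of $[0,1]$, so no choice of $d_*'$ works. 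Hence $\mathcal{D}(K^{1/2})$ as defined does not apply to $\tilde g$, and your Stage~2 bound is unjustified. (Nor can you replace $\mathcal{D}(K^{1/2})$ here by ordinary $\mathbb{P}^d$-decoupling: $p=\tfrac{2(d+1)}{d-1}$ lies strictly above the Bourgain--Demeter endpoint $\tfrac{2(d+2)}{d}$.)

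The paper avoids this precisely by reversing your order. It first decomposes $B_{K^2}$ into $K$-balls and applies $\mathcal{D}(K^{1/2})$ \emph{directly} to $g$ on each—no rescaling, so the shell hypothesis \eqref{e:Assumpg} is intact and the definition of $\mathcal{D}(K^{1/2})$ applies verbatim—yielding the decoupling into $K^{-1/2}$-caps $\nu$ (this is \eqref{e:Step1}). Only \emph{then}, with each $g_\nu$ now supported on a $K^{-1}\times K^{-1/2}\times\cdots\times K^{-1/2}$ plate, does it perform the $(d-1)$-dimensional reduction (your Stage~1 idea) to go from $K^{-1/2}$- to $K^{-1}$-caps on the full $B_{K^2}$, at cost $C_\varepsilon K^\varepsilon$ (this is \eqref{e:Goal3June_1}). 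The hard work you correctly flag as ``the main obstacle''—rigorously identifying the plate decoupling with $\mathbb{P}^{d-1}$-decoupling via Taylor expansion of the phase in the thin variable—is indeed the heart of the argument, but it belongs in the second step, after the induction hypothesis has already been spent.
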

	Once we could prove this, we may conclude $\mathcal{D}(N) \lesssim_\varepsilon N^\varepsilon$ as follows. 
	\begin{proof}[Proof of Proposition \ref{p:Induction} $\Rightarrow$ Theorem \ref{cl:ShellDecoup}] 
	Let $m\in \mathbb{N}$ be a large number which we will choose later. By applying \eqref{e:Goal2June} with $K = N, N^\frac12, N^{\frac1{2^2}},\ldots, N^{\frac1{2^m}}$ inductively, we obtain that 
	\begin{align*}
	\mathcal{D}(N)
	\le 
	C_\varepsilon N^\varepsilon \mathcal{D}(N^\frac12) 
	\le 
	C_\varepsilon^2 N^{\varepsilon(1+\frac12)} \mathcal{D}(N^{\frac1{2^2}})
	\le \ldots \le 
	C_\varepsilon^m N^{2\varepsilon } \mathcal{D}(N^{\frac1{2^m}}) 
	\end{align*}
	where we also used $ \sum_{i=1}^m 2^{-(i-1)} \le 2$ at the last inequality. 
	We now choose $m$ to be an unique integer which satisfies 
	$\log_2 \log\, N < m \le \log_2 \log\, N + 1$. In particular, we have  $N^{\frac1{2^m}} \le e$, Napier's number, and hence 
	$$
	\mathcal{D}(N)
	\le 
	C_\varepsilon^m N^{2\varepsilon } \mathcal{D}(e).  
	$$
	On the other hand, we clearly have $\sharp \mathcal{C}_{e^{-1}} \le 10^d$ and hence 
	$$
	\| E_{\mathbb{P}^{d} } g \|_{L^p( B_{e^2} )}
	\le 
	\sum_{\theta \in \mathcal{C}_{e^{-1}}} 
	\| E_{\mathbb{P}^{d} } g_\theta  \|_{L^p( B_{e^2} )}
	\le 
	10^{\frac{d}{2}} 
	\big( 
	\sum_{\theta \in \mathcal{C}_{e^{-1}}} 
	\| E_{\mathbb{P}^{d} } g_\theta  \|_{L^p( B_{e^2})}^2
	\big)^\frac12
	$$
	which means $\mathcal{D}(e) \lesssim1$. In the case of  $C_\varepsilon\le1$, this completes the proof. Suppose $C_\varepsilon>1$. 
	If we notice that 
	$$
	m - 1 \le \log_2 \log\, N = \frac{\log_{C_\varepsilon} \log\, N}{\log_{C_\varepsilon} 2}, 
	$$
	then it follows that 
	$$
	C_\varepsilon^m = C_\varepsilon C_\varepsilon^{m-1}
	\le 
	C_\varepsilon ( \log\, N)^{ \frac{1}{\log_{C_\varepsilon} 2} }
	\le 
	C_\varepsilon C_\varepsilon' N^\varepsilon
	$$
	which concludes $\mathcal{D}(N) \lesssim_\varepsilon N^{3\varepsilon}$. 
	\end{proof}
	
	Fix arbitrary $N\ge1$ and $\varepsilon>0$ and let us prove \eqref{e:Goal2June} in the following. 
	
	\subsection{Proof of Proposition \ref{p:Induction}: Apply the induction hypothesis}\label{subsection3.1}
		We begin with an application of the decoupling inequality at scale $\sqrt{K}$. 
		To this end we decompose $B_{K^2}$ into almost disjoint balls $(B_K^{(i)})_i$ with radius $K$: $B_{K^2} \subset \bigcup_i B_K^{(i)}$. 
		Then it follows from the definition of $\mathcal{D}(K^\frac12)$ that 
		$$
		\|
		E_{\mathbb{P}^d} g
		\|_{L^p(B_{K^2})}^p
		\lesssim 
		\sum_i
		\|
		E_{\mathbb{P}^d} g
		\|_{L^p(B_{K}^{(i)})}^p
		\le 
		\mathcal{D}(K^\frac12)^p
		\sum_i
		\big( 
		\sum_{\nu \in \mathcal{C}_{K^{-1/2}}}
		\|
		E_{\mathbb{P}^d} g_\nu
		\|_{L^p(w_{B_{K}^{(i)}})}^2
		\big)^\frac{p}2. 
		$$
		Since $p\ge2$ Minkowski's inequality and \eqref{e:WeightProperty1} show that 
		\begin{equation}\label{e:Step1}
		\| E_{\mathbb{P}^{d}} g \|_{ L^p( B_{K^2} ) }
		\lesssim 
		\mathcal{D}( K^{\frac12} ) \big( \sum_{ \nu \in \mathcal{C}_{K^{-1/2}} } \| E_{\mathbb{P}^{d}} g_\nu \|_{ L^p( w_{B_{K^2}} ) }^2  \big)^\frac12. 
		\end{equation}
		Our new observation is the following simple fact regarding ${\rm supp}\, g_\nu$. Namely the assumption \eqref{e:Assumpg} yields that 
		${\rm supp}\, g_\nu$ is contained in some 
		$ \frac{1}{K} \times \frac{1}{\sqrt{K}} \times \cdots \times \frac{1}{\sqrt{K}}$-plate as long as $K\le N$. 
		Note that this plate is more or less contained in the shell $\{ d_* - \frac1N \le |\xi| \le d_* + \frac1N \}$ and the thin direction is arbitrary.  
		With this in mind, we fix any $\nu \in \mathcal{C}_{K^{-1/2}}$ which is now regarded as $ \frac{1}{K} \times \frac{1}{\sqrt{K}} \times \cdots \times \frac{1}{\sqrt{K}}$-plate and then decouple $\| E_{\mathbb{P}^{d} } g_\nu \|_{L^p(\mathbb{R}^{d+1})} $ further. More precisely we next aim to show 
		\begin{equation}\label{e:Goal3June_1}
			\| E_{\mathbb{P}^{d} } g_\nu \|_{L^p( w_{B_{K^2}} )}
			\le 
			C_\varepsilon K^{\varepsilon} 
			\big( \sum_{\theta \in \mathcal{C}_{K^{-1}}: \theta \subset \nu } \| E_{\mathbb{P}^{d} } g_\theta \|_{L^p( w_{B_{K^2}} )}^2 \big)^\frac12. 
		\end{equation}
		Remark that weights $w_{B_{K^2}}$ on left and right hand side could be different. 
		By the rotation with respect to the $x$-variable, we may assume that 
		\begin{equation}\label{e:NuNov}
		\nu = \Bigl[ d_* - \frac1K, d_* + \frac1K \Bigr] \times \Bigl[-\frac1{\sqrt{K}}, \frac{1}{\sqrt{K}}\Bigr]^{d-1}. 
		\end{equation}
		In order to avoid heavy subscrips, we write $h = g_\nu$. 
		Moreover, since $w_{B_{K^2}}$ decays rapidly away from $B_{K^2}$ \eqref{e:Goal3June_1} can be reduced to show (see Proposition 9.15 in \cite{Dem})
			\begin{align}\label{e:Goal3June_2}
			&\| E_{\mathbb{P}^{d} } h \|_{L^p( B_{K^2} )}
			\le 
			C_\varepsilon K^\varepsilon 
			\big( \sum_{\theta \in \mathcal{C}_{K^{-1}}} \| E_{\mathbb{P}^{d} } h_\theta \|_{L^p( w_{B_{K^2}} ) }^2 \big)^\frac12,
			\end{align}
			for all $B_{K^2}$ and all $h$ such that  
			$$
			{\rm supp}\, h \subset  \Bigl[ d_* - \frac1K, d_* + \frac1K \Bigr] \times \Bigl[-\frac1{\sqrt{K}}, \frac{1}{\sqrt{K}}\Bigr]^{d-1}. 
			$$
			Once we could prove \eqref{e:Goal3June_2}, then we conclude \eqref{e:Goal2June}  by combining this with \eqref{e:Step1}.
			We will establish this by combining the dimension deduction argument in \cite{FSWW} and the justification of the locally constant heuristics in \cite{BourDem16}. 
			
			\subsection{Heuristic argument for \eqref{e:Goal3June_2}}\label{subsection3.2}	
			Let us first provide an idea of the proof of \eqref{e:Goal3June_2} as its proof involves several technical calculations. 
			Our model case emerges for the specific $h$ which is \textit{locally constant at scale $K^{-1}$} in $\xi_1$-variable in the sense that $h(\xi_1,\overline{\xi}) \sim h(\eta_1,\overline{\xi})$ holds for all $\xi_1,\eta_1 $ satisfying $|\xi_1 - \eta_1|\le K^{-1}$ and all $\overline{\xi} \in \mathbb{R}^{d-1}$. 
			To make argument simpler we also assume $B_{K^2} \sim [-K^2,K^2]^{d+1}$ and $d_* = 0$, in particular  
			$$
			{\rm supp}\, h \subset \Big[ -\frac1K, \frac1K \Big] \times \Big[-\frac1{\sqrt{K}}, \frac{1}{\sqrt{K}} \Big]^{d-1}. 
			$$
			For such $h$ we can easily reduce the dimension by 1 as 
			\begin{equation}\label{e:h_Const}
			E_{\mathbb{P}^d} h(x_1,\overline{x},t) 
			\sim 
			\big(
			\int_{-\frac1K}^\frac1K e^{i(x_1\xi_1 + t\xi_1^2)}
			\, d\xi_1
			\big)
			E_{\mathbb{P}^{d-1}} \overline{h}(\overline{x},t),
			\;\;\;
			(x_1,\overline{x},t) \in \mathbb{R}^{d+1}
			\end{equation}
			where $\overline{h}(\overline{\xi}) := h(0,\overline{\xi})$. 
			We then decompose 
			$$
			B_{K^2} \sim [-K^2,K^2]^{d+1} = \bigcup_{j_1} \big( [j_1K, (j_1+1)K] \times [-K^2,K^2]^{d} \big) =: \bigcup_{j_1} P_{j_1},
			$$ 
			where $j_1 $ runs over finite subset of integers. 
			Once we could prove 
			\begin{equation}\label{e:Heur1}
			\| E_{\mathbb{P}^d} h \|_{L^p( P_{j_1} )}
			\lesssim_\varepsilon K^\varepsilon 
			\big(
			\sum_{\theta \in \mathcal{C}_{K^{-1}}} 
			\| E_{\mathbb{P}^d} h_\theta \|_{L^p( w_{P_{j_1}} )}^2
			\big)^\frac12
			\end{equation}
			for all $j_1$ uniformly, then one can sum up to conclude \eqref{e:Goal3June_2}. 
			Let us demonstrate the proof of \eqref{e:Heur1} for $j_1=0$. 
			Notice that $(t,x) \in P_0$ implies $|x_1|\le K$ and $|t| \le K^2$ and hence \eqref{e:h_Const} indicates that 
			$$
			|E_{\mathbb{P}^d}h(x,t)| 
			\sim 
			K^{-1} 
			|E_{\mathbb{P}^{d-1}}\overline{h}(\overline{x},t)|
			$$
			on $P_0$. 
			Hence 
			$$
			\| E_{\mathbb{P}^d} h \|_{L^p(P_0)}
			\sim 
			K^{-1} 
			\| E_{\mathbb{P}^{d-1}} \overline{h}(\overline{x},t) \|_{L^p_{x,t}(P_0)}
			\sim 
			K^{-1+\frac1p} 
			\| E_{\mathbb{P}^{d-1}} \overline{h}  \|_{L^p_{\overline{x},t}([-K^2,K^2]^{d})}.
			$$
			Now recall that $ p = \frac{2(d+1)}{d-1}$ and this is the endpoint of Bourgain--Demeter's decoupling inequality on $\mathbb{R}^{d-1}$. 
			Therefore we obtain 
			$$
			\| E_{\mathbb{P}^{d-1}} \overline{h}  \|_{L^p_{\overline{x},t}([-K^2,K^2]^{d})}
			\lesssim_\varepsilon K^\varepsilon 
			\big(
			\sum_{\overline{\theta} \in \overline{\mathcal{C}}_{K^{-1}}}
			\| E_{\mathbb{P}^{d-1}} \overline{h}_{\overline{\theta}}  \|_{L^p_{\overline{x},t}( w_{ [-K^2,K^2]^{d}})}^2
			\big)^\frac12
			$$
			where $\overline{\theta}$ and $\overline{\mathcal{C}}_{K^{-1}}$ represent $d-1$ dimensional caps. 
			We then recall \eqref{e:h_Const} to see that 
			$$
			|E_{\mathbb{P}^{d-1}} \overline{h}_{\overline{\theta}}(\overline{x},t)|
			\sim 
			K 
			|E_{\mathbb{P}^{d}} {h}_{[- \frac1K,\frac1K]\times \overline{\theta}}(x,t)|
			$$
			for all $(x,t) \in P_0$. 
			This shows that 
			$$
			K^{-1+\frac1p}\| E_{\mathbb{P}^{d-1}} \overline{h}_{\overline{\theta}}  \|_{L^p_{\overline{x},t}( w_{ [-K^2,K^2]^{d}})}
			\sim 
			\| E_{\mathbb{P}^d} h_\theta \|_{L^p(w_{P_0})}
			$$
			with $\theta := [-\frac1K,\frac1K] \times \overline{\theta} \in \mathcal{C}_{K^{-1}}$ which concludes \eqref{e:Heur1} for $j_1=0$. 
			
			In the above argument, the critical heuristic is the locally constant property of $h$ and we need to justify it to make the proof rigorous. 
			Such situation often appears in the Fourier restriction theory, see the proof of an inequality (29) in  \cite{BourDem16}, also Bourgain--Guth's argument \cite{BourGuth11} and Guth's survey paper \cite{GuthSurvey}.   
			In fact we will employ the argument of Theorem 5.1 in \cite{BourDem16} to justify the above heuristic in the next subsection. 
			We also mention an inequality in \cite[(7.34)]{FSWW} which is the decoupling inequality for some thin plate.  This reminds our goal \eqref{e:Goal3June_2} but we need to be careful since the width of our plate is $K^{-1}$ whilst the physical scale is $K^2$.  As far as we understand the uncertainly principle would be applicable if the width of the plate is $K^{-2}$ when the physical scale is $K^2$. 
			Therefore one requires some non-trivial argument to justify the above heuristics.

			\subsection{Proof of Proposition \ref{p:Induction}: Rigorous argument for \eqref{e:Goal3June_2}}\label{subsection3.3}
	We begin with claiming that we may argue as if $d_* = 0$ by a nice change of variable which is standard in this context. We write $x = (x_1, \overline{x}) \in \R \times \R^{d-1}$ and $\xi = (\xi_1, \overline{\xi}) \in \R \times \R^{d-1}$. 
	By changing variable with respect to $\xi_1$, we have that  
		\begin{align*}
			E_{\mathbb{P}^{d}} h(x,t)
			= 
			e^{i (x_1 d_* + t|d_*|^2)} 
			E_{\mathbb{P}^{d}} [ \1_{[- \frac1K, \frac1K] \times [ -\frac{1}{\sqrt{K}}, \frac{1}{\sqrt{K}} ]^{d-1}} h^{d_*e_1}] (x_1 + 2d_* t, \overline{x},t),
		\end{align*}
		where 
		$$
		h^{d_*e_1}(\xi) := h(\xi_1+d_*, \overline{\xi}).
		$$
		
		Hence, 
		\begin{align*}
			\| E_{\mathbb{P}^{d}} h \|_{L^p( B_{K^2} )}^p
			=& 
			\int_{ B_{K^2} } 
			\bigl|E_{\mathbb{P}^{d}} [ \1_{[- \frac1K, \frac1K] \times [ -\frac{1}{\sqrt{K}}, \frac{1}{\sqrt{K}} ]^{d-1}} h^{d_*e_1}] (x_1 + 2d_* t, \overline{x},t) \bigr|^p\, dxdt. 
		\end{align*}
		By a further change of variables $y_1 = x_1 + 2d_* t$, $\overline{y}= \overline{x}$, $s= t$ whose Jacobian is $1$, we see that 
		\begin{align*}
			\| E_{\mathbb{P}^{d}} h \|_{L^p( B_{K^2} )}^p
			\le& 
			C
			\int_{ \widetilde{B_{K^2}} }  
			\bigl|E_{\mathbb{P}^{d}} [ \1_{[- \frac1K, \frac1K] \times [ -\frac{1}{\sqrt{K}}, \frac{1}{\sqrt{K}} ]^{d-1}} h^{d_*e_1}] (y,s)\bigr|^p\, dyds.
		\end{align*}
		Here $\widetilde{B_{K^2}}$ is an image of $B_{K^2}$ under the change of variable. This might not be a proper ball in general but one can cover it by $O(1)$ many balls with same size. Hence we regard $\widetilde{B_{K^2}}$ as another ball with radius $K^2$. 
		We next decompose the physical space into thin plates 
		$$
		\widetilde{B_{K^2}}
		\subset 
		\bigcup_{ j_1 } P_{ j_1 },
		\;\;\;
		P_{ j_1 } := [j_1K, (j_1+1)K) \times  \pi(\widetilde{B_{K^2}}), 
		$$
		where $\pi(\widetilde{B_{K^2}})$ denotes the projection of $\widetilde{B_{K^2}} \subset \mathbb{R}^{d+1}$ onto $\mathbb{R}^{d}$, and $j_1$ runs over finite subset of integers. 
		Note that $P_{ j_1 }$ is a thin plate in $x_1$-direction with width $K$.
		Then we have that  
		\begin{equation}\label{e:Step2}
		\| E_{\mathbb{P}^{d}} h \|_{L^p( B_{K^2})}
		\le 
		C \big( \sum_{ j_1 }  \big\| E_{\mathbb{P}^{d}} [ \1_{[- \frac1K, \frac1K] \times [ -\frac{1}{\sqrt{K}}, \frac{1}{\sqrt{K}} ]^{d-1}} h^{d_*e_1}] \big\|_{L^p_{y,s}(P_{ j_1 })}^p \big)^\frac1p.
		\end{equation}
		
		From now we fix arbitrary $j_1 $ and intend to show 
		\begin{align}\label{e:Goal4June_1}
		&  
		\big\| E_{\mathbb{P}^{d}} [ \1_{[- \frac1K, \frac1K] \times [ -\frac{1}{\sqrt{K}}, \frac{1}{\sqrt{K}} ]^{d-1}} h^{d_*e_1}] \big\|_{L^p_{y,s}(P_{ j_1})}  \nonumber \\
		\le& C_{\varepsilon,M} K^\varepsilon 
		\big(
		\sum_{\overline{\theta} \in \overline{\mathcal{C}}_{K^{-1}}}
		\big\| E_{\mathbb{P}^{d}} [ \1_{[- \frac1K, \frac1K] \times \overline{\theta}} h^{d_*e_1}] \big\|_{L^p_{y,s}( \mu_{ j_1 })}^2
		\big)^\frac12,
		\end{align}
		where 
		$$
		\mu_{ j_1 }(y,s) 
		:= 
		\Bigl(1+ \bigl| \frac{y_1 - j_1 K}{K} \bigr| \Bigr)^{-M} w_{ \pi(\widetilde{B_{K^2}}) }(\overline{y},s), 
		$$ 
		$M\gg1$ is some fixed large number, 
		and $ \overline{\mathcal{C}}_{K^{-1}}$ denotes a $d-1$ dimensional $\frac1K$-caps. 
		The weight function $\mu_{ j_1 }$ satisfies 
			\begin{equation}\label{e:Sumup}
			\sum_{j_1 } \mu_{j_1}(y,s) 
			\lesssim w_{\widetilde{B_{K^2}}}(y,s)
		\end{equation}
		by choosing $M\gg1$ large enough. 
		
		Suppose that we could prove \eqref{e:Goal4June_1} for a while and let us see how it yields \eqref{e:Goal3June_2}.
		In fact, it follows from \eqref{e:Step2},  Minkowski inequality, \eqref{e:Sumup}, and undoing the change of variable that 
		\begin{align*}
		&\| E_{\mathbb{P}^{d}} h \|_{L^p( B_{K^2})}\\
		\le& 
		C_\varepsilon K^\varepsilon  
		\bigg\{ \sum_{j_1 }
		\bigg(
		\sum_{\overline{\theta} \in \overline{\mathcal{C}}_{K^{-1}} }
		\big\| E_{\mathbb{P}^{d}} [ \1_{[- \frac1K, \frac1K] \times \overline{\theta}} h^{d_*e_1}] \big\|_{L^p_{y,s}( \mu_{ j_1 })}^2
		\bigg)^{\frac12\cdot p}
	       \bigg\}^\frac1p\\
	       \le& 
	       C_\varepsilon K^\varepsilon  
		\bigg\{ 
		\sum_{\overline{\theta} \in \overline{\mathcal{C}}_{K^{-1}}}
		\bigg(
		\sum_{j_1 }
		\big\| E_{\mathbb{P}^{d}} [ \1_{[- \frac1K, \frac1K] \times \overline{\theta}} h^{d_*e_1}] \big\|_{L^p_{y,s}( \mu_{ j_1})}^p
		\bigg)^{\frac1p\cdot 2}
	       \bigg\}^\frac12\\
	       \le&
	       C_\varepsilon K^\varepsilon  
		\big(
		\sum_{\overline{\theta} \in \overline{\mathcal{C}}_{K^{-1}}}
		\big\| E_{\mathbb{P}^{d}} [ \1_{[- \frac1K, \frac1K] \times \overline{\theta}} h^{d_*e_1}] \big\|_{L^p_{y,s}(  w_{\widetilde{B_{K^2}}})}^2
		\big)^{\frac12}\\
		\le& 
		C_\varepsilon K^\varepsilon  
		\big(
		\sum_{\overline{\theta} \in \overline{\mathcal{C}}_{K^{-1}} }
		\big\| E_{\mathbb{P}^{d}} [ \1_{[d_* - \frac1K, d_* + \frac1K] \times \overline{\theta}} h] \big\|_{L^p_{x,t}( w_{\widetilde{B_{K^2}}})}^2
		\big)^{\frac12}
		\end{align*}
		which yields the goal \eqref{e:Goal3June_2} since $[d_* - \frac1K, d_* + \frac1K] \times \overline{\theta}\in \mathcal{C}_{K^{-1}}$. 
		 
	Therefore the proof is boiled down to show \eqref{e:Goal4June_1}. 
	As we have explained heuristically we will achieve this by invoking the lower degree decoupling inequality.
	In order to justify the locally constant property of the Fourier transform, we invoke the argument using Taylor expansion; this argument can be found  in the proof of an inequality (9.5) in \cite{BourDem16} for instance. 
	To this end, we first shift the center of $P_{j_1}$ to the origin. 
	From now we denote the centre of $\pi( \widetilde{B_{K^2}})$ by $K^2(\overline{\mathbf{j}}, j_{d+1}) \in \mathbb{R}^{d}$ and write $\mathbf{j} = (j_1, \overline{\mathbf{j}}, j_{d+1})$. 
	We insert 
	$$
	1 = e^{-i (j_1 K \xi_1 + K^2 \overline{\mathbf{j}} \cdot \overline{\xi} + j_{d+1} K^2 |\xi|^2 )} e^{i  (j_1 K \xi_1 + K^2 \overline{\mathbf{j}} \cdot \overline{\xi} + j_{d+1} K^2 |\xi|^2 )}
	$$
	to see that 
	\begin{align*}
	&E_{\mathbb{P}^{d}} [ \1_{[- \frac1K, \frac1K] \times [ -\frac{1}{\sqrt{K}}, \frac{1}{\sqrt{K}} ]^{d-1}} h^{d_*e_1}](y,s) \\
	=:&
	\int_{-\frac{1}{K}}^{\frac{1}{K}} 
	e^{ i \big( (y_1 - j_1K)\xi_1 + (s - j_{d+1} K^2)|\xi_1|^2 \big) } \\
	&\times 
	\int_{ [ -\frac{1}{\sqrt{K}}, \frac{1}{\sqrt{K}} ]^{d-1} }
	e^{ i \big( (\overline{y} - \overline{\mathbf{j}} K^2) \cdot \overline{\xi} + (s - j_{d+1} K^2) |\overline{\xi}|^2 \big) }
	h^{d_*e_1}_{\xi_1, \mathbf{j}}(\overline{\xi})\, d\overline{\xi} d\xi_1,\\
	&{\rm where}\;\;\;
	h^{d_*e_1}_{\xi_1, \mathbf{j}}(\overline{\xi}):=  e^{i  (j_1 K \xi_1 + K^2 \overline{\mathbf{j}} \cdot \overline{\xi} + j_{d+1} K^2 |\xi|^2 )}h^{d_*e_1}(\xi)\\
	=& 
	\int_{-\frac{1}{K}}^{\frac{1}{K}} 
	e^{ i \big( (y_1 - j_1K)\xi_1 + (s - j_{d+1} K^2) |\xi_1|^2 \big) } 
	E_{\mathbb{P}^{d-1}}\big[ 
	h^{d_*e_1}_{\xi_1, \mathbf{j}}\big](\overline{y} - \overline{\mathbf{j}} K^2, s - j_{d+1} K^2)\,  d\xi_1.
	\end{align*}
	We do the Taylor expansion to the exponential part 
	\[
	\begin{split}
	& e^{ i \big( (y_1 - j_1K)\xi_1 + (s-j_{d+1} K^2) |\xi_1|^2 \big) }\\ 
	& = 
	\sum_{m,l=0}^\infty \frac{1}{m! l!} \big( \frac{10 i (y_1 - j_1 K)}{ K } \big)^m \big( \frac{10i (s - j_{d+1} K^2) }{K^2} \big)^l 
	\big( \frac{ K \xi_1}{10} \big)^m \big( \frac{ K^2|\xi_1|^2 }{10 } \big)^l. 
	\end{split}
	\]
	Since $(y,s) \in P_{ j_1 }$ implies $0\le y_1 - j_1 K \le K $ and $0\le s - j_{d+1} K^2 \le K^2$, we have that 
	\begin{align*}
	&\bigl|E_{\mathbb{P}^{d}} [ \1_{[- \frac1K, \frac1K] \times [ -\frac{1}{\sqrt{K}}, \frac{1}{\sqrt{K}} ]^{d-1}} h^{d_*e_1}](y,s) \bigr| \\
	\le& 
	\sum_{m,l=0}^\infty \frac{10^m 10^l}{m! l!}
	\Big| 
	\int_{-\frac{1}{K}}^{\frac{1}{K}} 
	\big( \frac{K \xi_1}{10} \big)^m \big( \frac{ K^2 |\xi_1|^2 }{10 } \big)^l
	E_{\mathbb{P}^{d-1}}\big[ 
	h^{d_*e_1}_{\xi_1, \mathbf{j}}\big](\overline{y} - \overline{\mathbf{j}} K^2, s - j_{d+1} K^2)\,  d\xi_1
	\Big|\\
	=:& 
	\sum_{m,l=0}^\infty \frac{10^m 10^l}{m! l!}
	\big|
	E_{\mathbb{P}^{d-1}} 
	\big[
	h^{d_*e_1}_{\mathbf{j},m,l}
	\big](\overline{y} - \overline{\mathbf{j}} K^2, s - j_{d+1} K^2)
	\big|
	\end{align*}
	for all $(y,s) \in P_{ j_1 }$, where 
	$$
	h^{d_*e_1}_{\mathbf{j},m,l}(\overline{\xi})
	:= 
	\int_{-\frac{1}{K}}^{\frac{1}{K}} 
	\big( \frac{K \xi_1}{10} \big)^m \big( \frac{ K^2 |\xi_1|^2 }{10 } \big)^l
	h^{d_*e_1}_{\xi_1, \mathbf{j}}(\overline{\xi})\,  d\xi_1.
	$$

	From this, $| [j_1K, (j_1+1)K] | = K$ and $P_{ j_1 } = [j_1K, (j_1+1)K] \times \pi( \widetilde{B_{K^2}} ) $, we obtain that 
	\begin{align*}
	&\big\| E_{\mathbb{P}^{d}} [ \1_{[- \frac1K, \frac1K] \times [ -\frac{1}{\sqrt{K}}, \frac{1}{\sqrt{K}} ]^{d-1}} h^{d_*e_1}] \big\|_{L^p_{y,s}(P_{ j_1 })}\\
	\le& 
	\sum_{m,l=0}^\infty \frac{10^m 10^l}{m! l!}
	K^\frac1p
	\big\| 
	E_{\mathbb{P}^{d-1}} 
	\big[
	h^{d_*e_1}_{\mathbf{j},m,l}
	\big]
	(\overline{y} - \overline{\mathbf{j}} K^2, s - j_{d+1} K^2)
	\big\|_{ L^p_{\overline{y},s}( \pi( \widetilde{B_{K^2}} ) ) }\\
	\sim& 
	\sum_{m,l=0}^\infty \frac{10^m 10^l}{m! l!}
	K^\frac1p
	\big\| 
	E_{\mathbb{P}^{d-1}} 
	\big[
	h^{d_*e_1}_{\mathbf{j},m,l}
	\big]
	\big\|_{ L^p_{\overline{y},s}( [0, K^2]^{d} ) }. 
	\end{align*}
	
	At this stage we may appeal to the \eqref{e:DecoupLocal} in $\mathbb{R}^{d}$ to see that 
	\begin{align}\label{e:Step3}
	&\big\| E_{\mathbb{P}^{d}} [ \1_{[- \frac1K, \frac1K] \times [ -\frac{1}{\sqrt{K}}, \frac{1}{\sqrt{K}} ]^{d-1}} h^{d_*e_1}] \big\|_{L^p_{y,s}(P_{ j_1})}\nonumber \\
	\le &
	C_\varepsilon K^\varepsilon K^\frac1p
	\sum_{m,l=0}^\infty \frac{10^m 10^l}{m! l!}
	\big( 
	\sum_{\overline{\theta} \in \overline{C}_{K^{-1}}}
	\big\| 
	E_{\mathbb{P}^{d-1}} 
	\big[
	\1_{\overline{\theta}}
	h^{d_*e_1}_{\mathbf{j},m,l}
	\big]
	\big\|_{ L^p_{\overline{y},s}(w_{ [0,K^2]^{d} }) }^2
	\big)^\frac12
	\end{align}
	since $ p = \frac{2(d+1)}{d-1}$ is the endpoint of Bourgain--Demeter's decoupling inequality on $\mathbb{R}^{(d-1) + 1}$.
	With our goal \eqref{e:Goal4June_1} in mind, 
    we next reproduce $E_{\mathbb{P}^{d}}$ from $E_{\mathbb{P}^{d-1}}$. 
	This step is also the consequence from the locally constant property and one can  find the following justifying argument in the proof of (9.5) in \cite{BourDem16} again. 
	Let us focus on each pieces by fixing $m,l,\theta$. 
	We now take arbitrary $y_1 \in I_{j_1}(K):= [j_1K, (j_1+1)K]$ and insert a simple identity 
	$
	1 = e^{- i(y_1 \xi_1 + s|\xi_1|^2)} e^{i(y_1 \xi_1 + s|\xi_1|^2)},
	$
	to see that 
	\begin{align*}
	&E_{\mathbb{P}^{d-1}} 
	\big[
	\1_{\overline{\theta}}
	h^{d_*e_1}_{\mathbf{j},m,l}\big]
	(\overline{y},s) \\
	=& 
	\int_{-\frac{1}{K}}^{\frac{1}{K}} \int_{\overline{\theta}} 
	\big( \frac{K \xi_1}{10} \big)^m \big( \frac{ K^2 |\xi_1|^2 }{10 } \big)^l
	e^{ i(\overline{y}\cdot \overline{\xi} + s|\overline{\xi}|^2)} 
	h^{d_*e_1}_{\xi_1,\mathbf{j}}(\overline{\xi})  \,  d\xi_1d\overline{\xi}\\
	=& 
	\int_{-\frac{1}{K}}^{\frac{1}{K}} \int_{\overline{\theta}} 
	\big( \frac{K \xi_1}{10} \big)^m \big( \frac{ K^2 |\xi_1|^2 }{10 } \big)^l
	e^{- i(y_1 \xi_1 + s|\xi_1|^2)} 
	e^{ i( {y}\cdot \xi + s|{\xi}|^2 ) }
	e^{ij_1K\xi_1}
	h^{d_*e_1}_{(\overline{\mathbf{j}},j_{d+1})}({\xi})\,  d\xi\\
	&{\rm where}
	\;\;\;
	h^{d_*e_1}_{(\overline{\mathbf{j}},j_{d+1})}({\xi})
	:=
	e^{ iK^2 \big(\overline{\mathbf{j}} \cdot \overline{\xi} + j_{d+1} |\xi|^2  \big) } h^{d_* e_1}(\xi)
	\\
	=& 
	\int_{-\frac{1}{K}}^{\frac{1}{K}} \int_{\overline{\theta}} 
	\big( \frac{ K \xi_1}{10} \big)^m \big( \frac{ K^2 |\xi_1|^2 }{10 } \big)^l
	e^{- i((y_1- j_1K) \xi_1 + s|\xi_1|^2)} 
	e^{ i( {y}\cdot \xi + s|{\xi}|^2 ) }
	h^{d_*e_1}_{(\overline{\mathbf{j}},j_{d+1})}({\xi})\,  d\xi\\
	=& 
	\sum_{m',l'=0}^\infty 
	\frac{1}{m'!l'!}
	\big( \frac{ 10 i (y_1 - j_1K) }{K} \big)^{m'}
	\big(
	\frac{10i s}{K^2}
	\big)^{l'} \\
	& \qquad
	\int_{-\frac{1}{K}}^{\frac{1}{K}} \int_{\overline{\theta}} 
	\big( \frac{ K \xi_1}{10} \big)^{m+m'} \big( \frac{ K^2 |\xi_1|^2 }{10 } \big)^{l+l'}
	e^{ i( {y}\cdot \xi + s|{\xi}|^2 ) }
	h^{d_*e_1}_{(\overline{\mathbf{j}},j_{d+1})}({\xi})\,  d\xi.
	\end{align*}
	Since $y _1 \in I_{j_1}(K)$ is arbitrary, we obtain that for all $s \in [0, K^2]$, 
	\begin{align*}
	&|E_{\mathbb{P}^{d-1}} 
	\big[
	\1_{\overline{\theta}}
	h^{d_*e_1}_{\mathbf{j},m,l}\big]
	(\overline{y},s) |\\
	\le& 
	\inf_{y_1 \in I_{j_1}(K)}
	\sum_{m',l'=0}^\infty 
	\frac{10^{m'} 10^{l'}}{m'!l'!}
	\Bigl|
	\int_{-\frac{1}{K}}^{\frac{1}{K}} \int_{\overline{\theta}} 
	\mathcal{M}(\xi_1)
	e^{ i( {y}\cdot \xi + s|{\xi}|^2 ) }
	h^{d_*e_1}_{(\overline{\mathbf{j}},j_{d+1})}({\xi})\,  d\xi
	\Bigr|,
	\end{align*}
	where $\mathcal{M}(\xi_1):= \big( \frac{ K \xi_1}{10} \big)^{m+m'} \big( \frac{ K^2 |\xi_1|^2 }{10 } \big)^{l+l'}$.
	This shows that 
	\begin{align}\label{e:Mayonaka}
	&\big\| 
	E_{\mathbb{P}^{d-1}} 
	\big[
	\1_{\overline{\theta}}
	h^{d_*e_1}_{\mathbf{j},m,l}
	\big]
	\big\|_{ L^p_{\overline{y},s}( w_{[0,K^2]^{d}}) } \nonumber \\
	\lesssim& 
	\frac{1}{K^{\frac1p}}
	\big\| 
	E_{\mathbb{P}^{d-1}} 
	\big[
	\1_{\overline{\theta}}
	h^{d_*e_1}_{\mathbf{j},m,l}
	\big](\overline{y},s)
	\big\|_{ L^p_{{y},s}(   w_{[0,K^2]^{d}} \otimes w_{I_{j_1}(K)}  ) } \nonumber\\
	\le& 
	\frac{1}{K^{\frac1p}}
	\sum_{m',l'=0}^\infty 
	\frac{10^{m'} 10^{l'}}{m'!l'!} \nonumber \\
	&\times 
	\bigg\| 
	\int_{[-\frac{1}{K},\frac{1}{K}]\times \overline{\theta}} 
	\mathcal{M}(\xi_1)
	e^{ i( {y}\cdot \xi + s|{\xi}|^2 ) }
	h^{d_*e_1}_{(\overline{\mathbf{j}},j_{d+1})}({\xi})\,  d\xi
	\bigg\|_{ L^p_{{y},s}(  w_{[0,K^2]^{d}} \otimes w_{I_{j_1}(K)} ) }\nonumber \\
	=& 
	\frac{1}{K^{\frac1p}}
	\sum_{m',l'=0}^\infty 
	\frac{10^{m'} 10^{l'}}{m'!l'!}\\
	&\times 
	\bigg\| 
	\int_{[-\frac{1}{K},\frac{1}{K}]\times \overline{\theta}} 
	\mathcal{M}(\xi_1)
	e^{ i( {y}\cdot \xi + s|{\xi}|^2 ) }
	h^{d_*e_1}_{\mathbf{j}}({\xi})\,  d\xi
	\bigg\|_{ L^p_{{y},s}(   w_{[0,K^2]^{d}} \otimes w_{[0,K)}   ) } \nonumber \\
	&{\rm where}\;\;\; 
	h^{d_*e_1}_{\mathbf{j}}({\xi})
	:= 
	e^{ i \big(Kj_1\xi_1 + K^2 ( \overline{\mathbf{j}},j_{d+1} ) \cdot (\overline{\xi}, |\xi|^2) \big)} h^{d_*e_1}(\xi).\nonumber 
	\end{align}
	We next intend to get rid of the Fourier multiplier 
	$$ 
	\1_{[-\frac1K, \frac1K]}(\xi_1)  \big( \frac{ K \xi_1}{10} \big)^{m+m'} \big( \frac{ K^2 |\xi_1|^2 }{10 } \big)^{l+l'}
	= 
	\Big[ \1_{[-1, 1]}  \frac{ \star^{m+m'+2(l+l')} }{10^{m+m'+l+l'}} \Big]( K\xi_1)
	$$ 
	from the above expression uniformly in $m,m',l,l'$.  
	To this end, we first notice that 
	\begin{align*}
	\Big|
	\big( \frac{d}{d\xi_1} \big)^M \big( \frac{ \star^{m+m'+2(l+l')} }{10^{m+m'+l+l'}} \big)(\xi_1)
	\Big|
	\le& 
	|m+m'+2(l+l')|^M 10^{-(m+m'+l+l')}\\
	\le &
	2^M 
	|m+m'+ l+l'|^M 10^{-(m+m'+l+l')}\\
	\le& 
	C_M
	\end{align*}
	for all $\xi_1 \in [-1,1]$ and any $M\gg1$\footnote{We here used that $ \sup_{t \in [0,\infty)} t^M 10^{-t} \le \exists C_M$}. 
	
	Hence we may extend the multiplier smoothly, namely there exists $\mathfrak{m}_{m,m',l,l'} \in C^\infty([-2,2])$ such that 
	$$
	\mathfrak{m}_{m,m',l,l'} = \frac{ \star^{m+m'+2(l+l')} }{10^{m+m'+l+l'}}\;\;\;{\rm on} \;\;\; [-1,1]
	$$
	and that 
	\begin{equation}\label{e:DeriMulti}
		\big\|\big( \frac{d}{d\xi_1} \big)^M \mathfrak{m}_{m,m',l,l'} \big\|_{L^\infty(\mathbb{R})} \le C_M.
	\end{equation}
	Note that \eqref{e:DeriMulti} yields that 
	\begin{equation}\label{e:DecayMultiplier}
	\begin{split}
		| {}^{\vee} \mathfrak{m}_{m,m',l,l'}(y_1) |
		& =
		\Big|
		\frac{1}{(iy_1)^M}
		\int_{-2}^2 e^{i y_1 \xi_1} \big( \frac{d}{d\xi_1}\big)^M \mathfrak{m}_{m,m',l,l'}(\xi_1)\, d\xi_1
		\Big|\\
		& \le C_M ( 1 + |y_1| )^{-M}. 
		\end{split}
	\end{equation}
	
	Using this, we go back to \eqref{e:Mayonaka} and obtain that 
	\begin{align*}
	&\bigg\| 
	\int_{[-\frac{1}{K},\frac{1}{K}]\times \overline{\theta}} 
	\mathcal{M}(\xi_1)
	e^{ i( {y}\cdot \xi + s|{\xi}|^2 ) }
	h^{d_*e_1}_{\mathbf{j}}({\xi})\,  d\xi
	\bigg\|_{ L^p_{{y},s}(w_{[0,K^2]^{d}} \otimes w_{[0,K)}   ) }\\
	=& 
	\bigg\| 
	\int_{[-\frac{1}{K},\frac{1}{K}]\times \overline{\theta}} 
	\mathfrak{m}_{m,m',l,l'}(K \xi_1) 
	e^{ i( {y}\cdot \xi + s|{\xi}|^2 ) }
	h^{d_*e_1}_{\mathbf{j}}({\xi})\,  d\xi
	\bigg\|_{ L^p_{{y},s}(w_{[0,K^2]^{d}} \otimes w_{[0,K)}  ) } \\
	=& 
	\bigg\| 
	\big[ \frac1K {}^{\vee} {\mathfrak{m}_{m,m',l,l'}} (\frac{\cdot}{K}) \big] \ast_{y_1} 
	E_{\mathbb{P}^{d}} \big[ \1_{ [-\frac{1}{K},\frac{1}{K}]\times \overline{\theta} }
	h^{d_*e_1}_{\mathbf{j}} \big]
	\bigg\|_{ L^p_{{y},s}( w_{[0,K^2]^{d}}  \otimes w_{[0,K)}  ) }.
	\end{align*}
	
	In view of \eqref{e:DecayMultiplier}, we know that 
	$$
	\Bigl\| \frac1K {}^{\vee} {\mathfrak{m}_{m,m',l,l'}} (\frac{\cdot}{K}) \Bigr\|_{L^1(\mathbb{R})} \le C_M 
	$$
	from which together with H\"{o}lder's inequality we obtain that 
	\begin{align*}
		&\big| 
		\big[ \frac1K {}^{\vee} {\mathfrak{m}_{m,m',l,l'}} (\frac{\cdot}{K}) \big] \ast_{y_1} 
		E_{\mathbb{P}^{d}} \big[ \1_{ [-\frac{1}{K},\frac{1}{K}]\times \overline{\theta} }
		h^{d_*e_1}_{\mathbf{j}} \big](y, s)
		\big|\\
		=& 
		\Big|
		\int  
		E_{\mathbb{P}^{d}} \big[ \1_{ [-\frac{1}{K},\frac{1}{K}]\times \overline{\theta} }
		h^{d_*e_1}_{\mathbf{j}} \big] (y_1-z,\overline{y},s)
		\big[ \frac1K {}^{\vee} {\mathfrak{m}_{m,m',l,l'}} (\frac{\cdot}{K}) \big] (z)
		\, dz
		\Big|\\
		\leq & 
		\int  
		\bigl| 
		E_{\mathbb{P}^{d}} \big[ \1_{ [-\frac{1}{K},\frac{1}{K}]\times \overline{\theta} }
		h^{d_*e_1}_{\mathbf{j}} \big] (y_1-z,\overline{y},s)
		\bigr|
		\, dP(z) C_M
		\\
		&{\rm where} \;\;\; 
		dP(z) := \frac{1}{C_M}\big| \frac1K {}^{\vee} {\mathfrak{m}_{m,m',l,l'}} (\frac{\cdot}{K}) \big| (z)
		\, dz \\ 
		\le& 
		C_M
		\big(
		\int  
		\bigl|
		E_{\mathbb{P}^{d}} \big[ \1_{ [-\frac{1}{K},\frac{1}{K}]\times \overline{\theta} }
		h^{d_*e_1}_{\mathbf{j}} \big] (y_1-z,\overline{y},s)
		\bigr|^p
		\, dP(z)
		\big)^\frac1p
		\big(
		\int 
		\, dP(z)
		\big)^{\frac1{p'}}\\
		\le& 
		C_M 
		\Bigl( 
		\big|
		 \frac1K {}^{\vee} {\mathfrak{m}_{m,m',l,l'}} (\frac{\cdot}{K})
		\big|
		\ast_{y_1} 
		\big|
		\big[E_{\mathbb{P}^{d}} \big[ \1_{ [-\frac{1}{K},\frac{1}{K}]\times \overline{\theta} }
		h^{d_*e_1}_{\mathbf{j}} \big]
		\big|^p
		\Bigr)^{\frac{1}{p}} (y, s)
	\end{align*}
	Hence we see that 
	\begin{align}\label{e:Mayonaka2}
	&\bigg\| 
	\int_{[-\frac{1}{K},\frac{1}{K}]\times \overline{\theta}} 
	\mathcal{M}(\xi_1)
	e^{ i( {y}\cdot \xi + s|{\xi}|^2 ) }
	h^{d_*e_1}_{\mathbf{j}}({\xi})\,  d\xi
	\bigg\|_{ L^p_{{y},s}(  w_{[0,K^2]^{d}} \otimes w_{[0,K)} )} \nonumber\\
	\le&	
	C_M
	\bigg\| 
		\Bigl|
		 \frac1K {}^{\vee} {\mathfrak{m}_{m,m',l,l'}} (\frac{\cdot}{K})
		\Bigr|
		\ast_{y_1} 
		\Bigl|
		\big[E_{\mathbb{P}^{d}} \big[ \1_{ [-\frac{1}{K},\frac{1}{K}]\times \overline{\theta} }
		h^{d_*e_1}_{\mathbf{j}} \big]
		\Bigr|^p
	\bigg\|_{ L^1_{{y},s}( w_{[0,K^2]^{d}} \otimes w_{[0,K)} )}  ^\frac1p.
	\end{align}
	If we write $\phi(y_1) := \big| E_{\mathbb{P}^{d}} \big[ \1_{ [-\frac{1}{K},\frac{1}{K}]\times \overline{\theta} }
	h^{d_*e_1}_{\mathbf{j}}\big] \big|^p(y,s) $ for simplicity, then 
    
    \begin{align*}
		&\Bigl\| \big|\frac1K {}^{\vee} {\mathfrak{m}_{m,m',l,l'}} (\frac{\cdot}{K}) \big| \ast_{y_1} \phi \Bigr\|_{L^1(w_{[0,K]})} \\
		\le&
		C_M 
		\int \phi(y_1) \, d\mu_{[-K,K]}(y_1),\;\;\;
		d\mu_{[-K,K]}(y_1):= \bigl( 1+ \bigl| \frac{y_1}{K} \bigr| \bigr)^{-M}\, dy_,
	\end{align*}
    since we have from \eqref{e:DecayMultiplier} that 
    \begin{align*}
	&\int\big|\frac1K {}^{\vee} {\mathfrak{m}_{m,m',l,l'}} (\frac{z-y_1}{K}) \big|w_{[0,K]}(z)\, dz\\
		\le& 
		C_M \int \bigl(1+ \bigl| \frac{y_1}{K} -z \bigr| \bigr)^{-M} w_{[0,1]}(z)\, dz
		\sim 
		C_M \bigl( 1+ \bigl|  \frac{y_1}{K} \bigr| \bigr)^{-M}. 
	\end{align*}

	Therefore returning to \eqref{e:Mayonaka2}, this shows that 
	\begin{align*}
	&\bigg\| 
	\int_{[-\frac{1}{K},\frac{1}{K}]\times \overline{\theta}} 
	\mathcal{M}(\xi_1)
	e^{ i( {y}\cdot \xi + s|{\xi}|^2 ) }
	h^{d_*e_1}_{\mathbf{j}}({\xi})\,  d\xi
	\bigg\|_{ L^p_{{y},s}(  w_{[0,K^2]^{d}} \otimes w_{[0,K)}   ) }\\
	\le& 
	C_M 
	\bigg\| 
	\big| 
	E_{\mathbb{P}^{d}} \big[ \1_{ [-\frac{1}{K},\frac{1}{K}]\times \overline{\theta} }
	h^{d_*e_1}_{\mathbf{j}} \big]\big|^p 
	\bigg\|_{ L^1_{{y},s}( \mu_{   [0,K^2]^{d} \times [-K,K] } ) }^\frac1p\\
	&{\rm where}\;\;\; 
	\mu_{  [0,K^2]^{d} \times [-K,K]  }(y,s)
	:= 
	(1+ | \frac{y_1}{K} |)^{-M} w_{[0,K^2]^{d}}(\overline{y},s)
	\\
	=& 
	C_M 
	\big\|  
	E_{\mathbb{P}^{d}} \big[ \1_{ [-\frac{1}{K},\frac{1}{K}]\times \overline{\theta} }
	h^{d_*e_1}_{\mathbf{j}} \big]
	\big\|_{ L^p_{{y},s}( \mu_{  [0,K^2]^{d} \times [-K,K] } ) }.
	\end{align*}
	Overall we could manage to get rid of the Fourier multiplier uniformly $m,m',l,l',j_1$. 
	We combine this estimate and \eqref{e:Mayonaka} to see that 
	\begin{align*}
	&\big\| 
	E_{\mathbb{P}^{d-1}} 
	\big[
	\1_{\overline{\theta}}
	h^{d_*e_1}_{\mathbf{j},m,l}
	\big]
	\big\|_{ L^p_{\overline{y},s}(w_{[0,K^2]^{d}}) }\nonumber \\
	\le& 
	C_M \frac{1}{K^\frac1p} \big\|  
	E_{\mathbb{P}^{d}} \big[ \1_{ [-\frac{1}{K},\frac{1}{K}]\times \overline{\theta} }
	h^{d_*e_1}_{\mathbf{j}} \big]
	\big\|_{ L^p_{{y},s}( \mu_{   [0,K^2]^{d} \times [-K,K] } ) }\\
	=& 
	C_M \frac{1}{K^{\frac1p}}
	\big\|
	E_{\mathbb{P}^{d}}\big[ \1_{ [ - \frac1K,\frac1K ]\times \overline{\theta} } h^{d_*e_1} \big] 
	\big\|_{ L^p_{{y},s}( \mu_{j_1} ) }
	\end{align*}

	Inserting this to  \eqref{e:Step3}, we conclude that 
	\begin{align*}
	&
	\big\| E_{\mathbb{P}^{d}} [ \1_{[- \frac1K, \frac1K] \times [ -\frac{1}{\sqrt{K}}, \frac{1}{\sqrt{K}} ]^{d-1}} h^{d_*e_1}] \big\|_{L^p_{y,s}(P_{ j_1 })} \\
	\le&
	C_\varepsilon K^\varepsilon K^\frac1p
	\sum_{m,l=0}^\infty \frac{10^m 10^l}{m! l!}
	\big( 
	\sum_{\overline{\theta} \in \overline{\mathcal{C}}_{K^{-1}}}
	\big\| 
	E_{\mathbb{P}^{d-1}} 
	\big[
	\1_{\overline{\theta}}
	h^{d_*e_1}_{ \mathbf{j},m,l}
	\big]
	\big\|_{ L^p_{\overline{y},s}( w_{ [0,K^2]^d  }) }^{ 2}
	\big)^\frac12\\
	\le &
	C_{\varepsilon,M} K^\varepsilon
	\big( 
	\sum_{\overline{\theta} \in \overline{\mathcal{C}}_{K^{-1}}}
	\big\|
	E_{\mathbb{P}^{d}}\big[ \1_{ [ - \frac1K,\frac1K ]\times \overline{\theta} } h^{d_*e_1} \big] 
	\big\|_{ L^p_{{y},s}( \mu_{ j_1} ) }^2
	\big)^\frac12
	\end{align*}
	which is our goal \eqref{e:Goal4June_1} and this completes the proof. 
	
            We end this section by giving a slight generalisation of Theorem \ref{cl:ShellDecoup} which is indeed necessary for the application to our PDE problem.
            At the same time, this point explains a difference between Theorem \ref{theorem:ShellDecoupling} and the work by Guo--Zorin-Kranich \cite{GZ}. 
            As we briefly mentioned in the introduction, Guo--Zorin-Kranich also considered the decoupling inequality with some geometrical constraint on ${\rm supp}\, \hat{f}$ or equivalently ${\rm supp}\, g$. 
            More precisely, they observed in Theorem 2.5 in \cite{GZ} that, in the framework of $\ell^p$-decoupling inequality rather than $\ell^2$-decoupling inequality, if one could prove the decoupling inequality for some exponent $p$, for all hyperplanes in $\mathbb{R}^d$, and all $g: \{\xi \in \mathbb{R}^d:|\xi|\le2\} \to \mathbb{C}$ whose support is contained in $N^{-1}$-neighborhood of the hyperplane, then one can upgrade it to the decoupling inequality for $g$ whose support is contained in $N^{-1}$-neighborhood of $C^2$ compact hypersurface. 
            One can regard our reduction of Theorem \ref{cl:ShellDecoup} to \eqref{e:Goal3June_2} as the $\ell^2$-decoupling version of this fact. 
            The proof of Theorem 2.5 in \cite{GZ} is based on the induction on the scale of the curvature of the hypersurface supporting ${\rm supp}\, g$ and Guo--Zorin-Kranich appealed to the parabolic rescaling at each inductive step.  
            On the other hand, our argument in Subsection \ref{subsection3.1} is based on the simple observation on the geometry of ${\rm supp}\, g_\nu$ just after \eqref{e:Step1} and hence it is a parabolic rescaling free argument. 
            For the purpose of the application to our PDE problem, this difference is important. 
            To be more precise, it is worth to clarify where the loss $K^\varepsilon$ in \eqref{e:Goal3June_2} comes from. 
            In fact, in our argument, it comes from the loss in Bourgain--Demeter's $\ell^2$-decoupling inequality in $\mathbb{R}^{d-1}$ only. 
            Namely, if $\mathcal{D}_{d-1}(K)$ denotes the best constant for the inequality 
            $$
            \| E_{\mathbb{P}^{d-1} } g \|_{L^p(B_{K^2})}
	        \le 
	        \mathcal{D}_{d-1}(K)
	        \big(
	        \sum_{\overline{\theta} \in \overline{\mathcal{C}}_{K^{-1}}} 
	        \| E_{\mathbb{P}^{d-1} } g_{\overline{\theta}}  \|_{L^p( w_{B_{K^2}} )}^2 
	        \big)^\frac12 
            $$
            for all reasonable $g : (-10,10)^{d-1} \to \mathbb{C}$ and all $B_{K^2} \subset \mathbb{R}^{d-1}\times \mathbb{R}$, then our proof shows that \eqref{e:Goal3June_2} can be stated as 
            \begin{equation}\label{e:MorePrecise}
            \| E_{\mathbb{P}^{d} } h \|_{L^p( B_{K^2} )}
			\le 
			C \mathcal{D}_{d-1}(K) 
			\big( \sum_{\theta \in \mathcal{C}_{K^{-1}}} \| E_{\mathbb{P}^{d} } h_\theta \|_{L^p( w_{B_{K^2}} ) }^2 \big)^\frac12. 
            \end{equation} 
            The $\ell^p$-decoupling version of this inequality can be found in Theorem 2.2 in Guo--Zorin-Kranich \cite{GZ} but they lose an extra factor of $\log K$. 
            Hence, up to a difference between $\ell^2$ and $\ell^p$-decoupling, this inequality improves Theorem 2.2 in Guo--Zorin-Kranich \cite{GZ} for the specific case\footnote{To be fair, we note that the argument in Guo--Zorin-Kranich \cite{GZ} handles much wider class of geometric constraint and $\log\, K$ factor is not so relevant for their purpose.}. 
            By virtue of this improvement, we can slightly generalise Theorem \ref{cl:ShellDecoup}.
            \begin{corollary}\label{Cor:TwoScaleDecoup}
                Let $d\ge 2$, $N_1 \ge N_2\ge1$, $p=\frac{2(d+1)}{d-1}$ and take arbitrary $d_* \in[1,2]$. 
                Then for arbitrary small $\varepsilon$, 
                \begin{equation}\label{e:N_2Gain}
                \big\| E_{\mathbb{P}^d} g \big\|_{L^p(B_{N_1^2})} \le C_\varepsilon N_2^{\varepsilon} \big( \sum_{\theta \in \mathcal{C}_{N_1^{-1}}} \big\| E_{\mathbb{P}^d} g_\theta  \big\|_{L^p(w_{B_{N_1^2}})}^2  \big)^\frac12
                \end{equation}
                holds for all $g$ satisfying 
                \begin{equation}\label{e:Assumpg2}
                {\rm supp}\, g\subset \{\xi\in\mathbb{R}^d: d_*-\frac1{N_1}\le |\xi|\le d_* + \frac1{N_1}\} \cap B_{N_2/N_1}
                \end{equation}
                where $B_{N_2/N_1} \subset \mathbb{R}^d$ is a ball of radius $N_2/N_1 \le 1$ with arbitrary centre. 
            \end{corollary}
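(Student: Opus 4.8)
The plan is to re-run the proof of Theorem \ref{cl:ShellDecoup}, tracking precisely where the $\varepsilon$-loss is produced, and to observe that the extra hypothesis \eqref{e:Assumpg2} only improves it. As emphasised in the discussion preceding the statement, the reduction of Theorem \ref{cl:ShellDecoup} creates loss only through the endpoint Bourgain--Demeter decoupling on $\mathbb{R}^{(d-1)+1}$ used in \eqref{e:Step3}; cf.\ the refined bound \eqref{e:MorePrecise}. The first step is to record the two-scale refinement of \eqref{e:MorePrecise}: if $1\le\kappa\le\sqrt K$ and $h$ is supported on $[d_*-\tfrac1K,d_*+\tfrac1K]$ times a $(d-1)$-dimensional ball of radius $\kappa/K$, then running the argument of Subsection \ref{subsection3.3} with the single change that in \eqref{e:Step3} the $(d-1)$-dimensional decoupling is applied to a function whose Fourier support lies in a ball of radius $\sim\kappa/K$ — so that, after one parabolic rescaling, it contributes $\mathcal{D}_{d-1}(\kappa)$ rather than $\mathcal{D}_{d-1}(K)$ — yields
\[
\|E_{\mathbb{P}^{d}}h\|_{L^p(B_{K^2})}\le C\,\mathcal{D}_{d-1}(\kappa)\Big(\sum_{\theta\in\mathcal{C}_{K^{-1}}}\|E_{\mathbb{P}^d}h_\theta\|_{L^p(w_{B_{K^2}})}^2\Big)^{\frac12},\qquad\mathcal{D}_{d-1}(\kappa)\lesssim_\varepsilon\kappa^\varepsilon .
\]
The Taylor-expansion manipulations and the justification of the locally constant heuristic in Subsection \ref{subsection3.3} do not see the size of this ball, so no other change is needed. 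If $N_2^2\le N_1$ this already closes the argument: after the change of variables of Subsection \ref{subsection3.3} and a rotation, $\supp g$ lies in a single plate of the above type, because the shell is $N_1^{-1}$-thick radially while its curvature over a ball of radius $N_2/N_1\le N_1^{-1/2}$ contributes only $\lesssim(N_2/N_1)^2\lesssim N_1^{-1}$ in the radial direction; applying the displayed inequality with $K=N_1$ and $\kappa\sim N_2$ gives \eqref{e:N_2Gain}.

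For the remaining range $\sqrt{N_1}<N_2\le N_1$ I would iterate, as in the proof of Proposition \ref{p:Induction} and its consequence, but carrying the second scale. Let $\mathcal{D}(K;\rho)$ denote the analogue of the constant $\mathcal{D}(K)$ introduced just before Proposition \ref{p:Induction}, with the additional constraint that $\supp g$ lie in a ball of radius $\rho/K$; put $K_j=N_1^{2^{-j}}$ and let $\rho_j=(N_2/N_1)K_j$ be the number of $K_j^{-1}$-caps needed to cover $B_{N_2/N_1}$. Decomposing $B_{K_j^2}$ into balls of radius $K_j$, applying the shell decoupling at scale $\sqrt{K_j}$ as in Subsection \ref{subsection3.1}, and then invoking the two-scale refinement above — noting that inside each scale-$\sqrt{K_j}$ plate the constraint \eqref{e:Assumpg2} restricts the support to a $(d-1)$-ball of tangential radius $\min(K_j^{-1/2},N_2/N_1)$ — gives, uniformly in $j$,
\[
\mathcal{D}(K_j;\rho_j)\le C\,\mathcal{D}(K_{j+1};\rho_{j+1})\cdot\mathcal{D}_{d-1}\big(\min(\sqrt{K_j},\rho_j)\big).
\]
I would run this recursion down to the first index $j^*$ with $\rho_{j^*}\le\sqrt{K_{j^*}}$, at which scale $B_{N_2/N_1}$ already lies inside one $K_{j^*}^{-1/2}$-cap, so that the case $N_2^2\le N_1$ treated above (equivalently the displayed two-scale refinement with $\kappa=\rho_{j^*}$) terminates the estimate. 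Since $\min(\sqrt{K_j},\rho_j)=\sqrt{K_j}$ for every $j<j^*$, the total decoupling loss telescopes to
\[
\Big(\prod_{j<j^*}\sqrt{K_j}\Big)^{\varepsilon}\cdot\rho_{j^*}^{\,\varepsilon}=\big(N_1^{1-2^{-j^*}}\big)^{\varepsilon}\Big(\frac{N_2}{N_1^{1-2^{-j^*}}}\Big)^{\varepsilon}=N_2^{\,\varepsilon},
\]
independently of $j^*$. Finally $j^*\ge1$ forces $N_1^{1/2}\le N_2$, hence $N_1\le N_2^2$ and $j^*\lesssim\log\log N_1\lesssim\log\log N_2$, so the accumulated implicit constant $C^{j^*}\lesssim(\log N_2)^{O(1)}$ is absorbed into a factor $N_2^{\varepsilon}$ exactly as $C_\varepsilon^m$ is absorbed at the end of the proof of Theorem \ref{cl:ShellDecoup}.

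The part I expect to require the most care is the two-scale bookkeeping in the iteration: one must check at each inductive stage that the function handed to the $(d-1)$-dimensional Bourgain--Demeter inequality is genuinely supported in a frequency ball of radius $\lesssim N_2\times(\text{cap side-length})$ — so that parabolic rescaling produces $N_2^{\varepsilon}$ and not $K_j^{\varepsilon}$ — and that the iteration is halted at the correct scale, namely once the constraint \eqref{e:Assumpg2} collapses the support to a single cap. The split at $N_2=\sqrt{N_1}$ is essential rather than cosmetic: once $N_2>\sqrt{N_1}$ the curvature of the shell over $B_{N_2/N_1}$ exceeds its radial width, so $\supp g$ is not contained in any single flat plate and the one-step argument is unavailable.
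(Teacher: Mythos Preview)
Your treatment of the case $N_2\le N_1^{1/2}$ is essentially the paper's argument: the support sits in a single $\tfrac{1}{N_1}\times(\tfrac{N_2}{N_1})^{d-1}$ plate, and one application of the refined inequality \eqref{e:MorePrecise} together with a parabolic rescaling of the $(d-1)$-dimensional Bourgain--Demeter estimate gives the $N_2^\varepsilon$ loss.

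For the range $N_1^{1/2}<N_2\le N_1$, however, you work much harder than necessary. Your iterative scheme is set up correctly and the telescoping computation does yield $N_2^\varepsilon$, but the paper disposes of this case in one line: Theorem~\ref{cl:ShellDecoup} already gives \eqref{e:N_2Gain} with $N_1^\varepsilon$ in place of $N_2^\varepsilon$, and since $N_1\le N_2^2$ one has $N_1^\varepsilon\le N_2^{2\varepsilon}$, which is enough. No two-scale bookkeeping, no iteration, no tracking of the stopping index $j^*$ is required. The split at $N_2=N_1^{1/2}$ that you identify as ``essential rather than cosmetic'' is indeed the natural dividing line, but precisely because on the side $N_2>N_1^{1/2}$ the shell result itself is already sharp up to relabelling $\varepsilon$.

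So your proposal is correct, and in the nontrivial regime it coincides with the paper; the difference is that you reprove Theorem~\ref{cl:ShellDecoup} (in a refined form) on the range where simply quoting it suffices.
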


            \begin{proof}
            Note that if $N_2^\varepsilon$ in \eqref{e:N_2Gain} is replaced by $N_1^\varepsilon$, then the inequality is no more than  Theorem \ref{cl:ShellDecoup}. 
            In this sense, the point here is to give an improvement $N_2^\varepsilon$ in the case $N_2\ll N_1$.
            When $N_1\ge N_2\ge N_1^{1/2}$, \eqref{e:N_2Gain} follows from Theorem \ref{cl:ShellDecoup} and hence it suffices to consider the case $N_2\le N_1^{1/2}$. 
            The argument in this case is almost parallel to the one we gave in the above but simpler. In fact we will use \eqref{e:MorePrecise} at one scale $K = N_1$ and we do not need to induct. 
            First, because of the rotation invariance, we may assume that the centre of $B_{N_2/N_1}$ is at $d_* e_1$. 
            In this case, we have  ${\rm supp}\, g \subset $  $\frac{1}{N_1} \times \frac1{\sqrt{K_*}} \times \cdots\times \frac1{\sqrt{K_*}}$-plate centred at $d_* e_1$ where $K_*:= (N_1/N_2)^2$. 
            Since $N_2/N_1 \le N_{1}^{-1/2}$, the function $g$ is already supported on a cap with radius $N_1^{-1/2}$. This means that we have \eqref{e:Step1} at the scale $K=N_1$ without the loss of $\mathcal{D}(K^{\frac12})$. 
            Hence, by virtue of \eqref{e:MorePrecise} which does not contain the loss\footnote{If one uses Theorem 2.2 in \cite{GZ} instead of \eqref{e:MorePrecise}, then the extra loss of $\log\, N_1$ will appear. } of $\log\, K$, the problem is now reduced to show that 
            $$
            \| E_{\mathbb{P}^{d-1} } \overline{h} \|_{L^p(B_{N_1^2})}
	        \le 
            C_\varepsilon N_2^\varepsilon
	        \big(
	        \sum_{\overline{\theta} \in \overline{\mathcal{C}}_{N_1^{-1}}} 
	        \| E_{\mathbb{P}^{d-1} } \overline{h}_{\overline{\theta}}  \|_{L^p( w_{B_{N_1^2}} )}^2 
	        \big)^\frac12 
            $$
            for all $\overline{h}$ supported on $[-N_2/N_1,N_2/N_1]^{d-1}$.  
            After the scaling, this is equivalent to 
            $$
            \| E_{\mathbb{P}^{d-1} } \widetilde{h} \|_{L^p(\mathcal{S})}
	        \le 
            C_\varepsilon N_2^\varepsilon
	        \big(
	        \sum_{\widetilde{\theta} \in \overline{\mathcal{C}}_{N_2^{-1}}} 
	        \| E_{\mathbb{P}^{d-1} } \widetilde{h}_{\widetilde{\theta}}  \|_{L^p( w_{\mathcal{S}} )}^2 
	        \big)^\frac12, 
            $$
            where $\widetilde{h}:=\overline{h}(N_2/N_1\cdot )$ and $\mathcal{S}$ is a slab with dimension ${ \underbrace{N_1N_2 \times \cdots \times N_1N_2}_{d-1} }\times N_2^2$ obtained by the scaling of $B_{N_1^2}$. 
            Since ${\rm supp}\, \widetilde{h} \subset [-1,1]^{d-1}$, this can be proved by chopping $\mathcal{S}$ into balls with radius $N_2^2$ and applying Bourgain--Demeter's $\ell^2$-decoupling inequality with $N_2^\varepsilon$-loss to each term. 
            \end{proof}

	\subsection{Shell Strichartz estimate: Proof of Theorem \ref{theorem:StrichartzShell}}\label{subsection3.5}
	It is now standard to derive the Strichartz estimate on torus from decoupling inequality, see \cite{Bour13} for instance. 
	We hence give here a sketch of the proof of the implication of Theorem \ref{theorem:StrichartzShell} from Theorem \ref{theorem:ShellDecoupling}. 
	In fact we make use of the local decoupling inequality Theorem \ref{cl:ShellDecoup}. 
	
	\begin{proof}[Proof of Theorem \ref{theorem:StrichartzShell}]
     It suffices to consider the case of $c_* =N_1$ and $p=\frac{2(d+1)}{d-1}$; the argument for other $c_*$ is parallel. Hence, ${\rm supp}\, \hat{\phi} \subset \mathcal{S}_{N_1,N_2}:= \{ k \in \mathbb{Z}^d : N_1 -1 \le |k| \le N_1 + 1\} \cap B_{N_2}$. 
	Let us denote the Fourier coefficient of $\phi$ by $\{a_k\}_{k \in \mathbb{Z}^d}$ so that 
	$$
	\phi(x) 
	= 
	\sum_{ k \in  \mathcal{S}_{N_1,N_2} } a_{k} e^{ i k \cdot x }.
	$$
	Correspondingly we define $g:[-2,2]^d\to \mathbb{C}$ by 
	$$
	g(\xi):= \sum_{ k \in  \mathcal{S}_{N_1,N_2}  } a_{ k } \delta_{\frac{k}{N_1}}(\xi),
	\;\;\; \xi \in [-2,2]^d,
	$$
	where $\delta_{\xi_0}$ denotes the Dirac delta\footnote{To be precise, we need to involve an approximation argument for Dirac delta by smooth function, see \cite{Bour13} for details.} at $\xi_0$ on $\mathbb{R}^d$.  
	If one writes $c_\theta = \frac{k}{N_1}$, then $\theta := [-\frac{1}{2N_1}, \frac{1}{2N_1}]^d + c_\theta \in \mathcal{C}_{N_1^{-1}}$. 
	Hence we may regard $g$ as 
	$$
	g(\xi):= \sum_{ \theta \in \mathcal{C}_{N_1^{-1}} } a_{ N_1 c_\theta } \delta_{c_\theta}(\xi)
	$$
	and hence 
	\begin{align*}
	E_{\mathbb{P}^d} g (x,t)
	&= 
	\sum_{ \theta \in \mathcal{C}_{N_1^{-1}} } a_{ N_1c_\theta }  e^{ i( x\cdot c_\theta + t|c_\theta|^2 ) } \\
	&= 
	\sum_{  k \in \mathcal{S}_{N_1,N_2} } a_{ k }  e^{ i( \frac{x}{N_1}\cdot k + \frac{t}{N_1^2}|k|^2 ) } \\
	&= 
	e^{i \frac{t}{N_1^2} \Delta} \phi(\frac{x}{N_1}).
	\end{align*}
	Hence we have that 
	\begin{align}\label{e:ExtensionTori}
	\big\| E_{\mathbb{P}^d} g \big\|_{ L^p( Q_{ N_1^2 } ) }	
	&= 
	\big\| e^{i \frac{t}{N_1^2} \Delta} \phi(\frac{x}{N_1}) \big\|_{ L^p( Q_{ N_1^2 } ) } 	
	= 
	N_1^{ \frac{2}{p} } N_1^{ \frac{d}{p} } \big\| e^{i t \Delta} \phi \big\|_{ L^p( [0,1]\times [0, N_1]^d  ) } 
	\end{align}
	where $Q_{N_1^2}:= [0, N_1^2]^{d+1}$.  
	Regarding the right-hand side, as observed in \cite{Bour13}, one indeed has that 
	$$
	\big\| e^{i t \Delta} \phi \big\|_{ L^p( [0,1]\times [0, N_1]^d ) }
	\sim 
	N_1^\frac{d}{p}\big\| e^{i t \Delta} \phi \big\|_{ L^p( \mathbb{T}^{d+1}) }
	$$
	because of the periodicity of $e^{i t \Delta} \phi$ with respect to $x$ variable. Hence \eqref{e:ExtensionTori} shows that 
\begin{equation}\label{e:ExtensionTori2}
\big\| E_{\mathbb{P}^d} g \big\|_{ L^p( Q_{ N_1^2 } ) }
\sim
N_1^{ \frac{2}{p} } N_1^{ \frac{2d}{p} } \big\| e^{i t \Delta} \phi \big\|_{ L^p(  \mathbb{T}^{d+1} ) }. 
\end{equation}
On the other hand, $g$ satisfies \eqref{e:Assumpg2} because of the assumption on the Fourier support of $\phi$ and hence we may apply our local decoupling inequality Corollary \ref{Cor:TwoScaleDecoup} to this input to see that 
	\begin{align*}
	&\big\| E_{\mathbb{P}^d} g \big\|_{ L^p( Q_{ N_1^2 } ) }
	\lesssim_\varepsilon
	N_2^\varepsilon 
	\big(
	\sum_{\theta \in \mathcal{C}_{N_1^{-1}}}
	\big\| E_{\mathbb{P}^d} g_\theta \big\|_{ L^p( w_{Q_{ N_1^2 }} ) }^2
	\big)^\frac12
	\end{align*}
	whilst we have from the definition of $g$ that 
	$$
	E_{\mathbb{P}^d} g_{\theta}(x,t)
	= 
	a_{N_1c_\theta} e^{ i( x\cdot c_\theta + t|c_\theta|^2 ) }. 
	$$
	Therefore we obtain that 
	\begin{align*}
	\big\| E_{\mathbb{P}^d} g \big\|_{ L^p( Q_{ N_1^2 } ) }
	&\lesssim_\varepsilon
	 N_2^\varepsilon 
	\bigg( \sum_{ \theta \in \mathcal{C}_{N_1^{-1}} } \big( |a_{N_1c_\theta}| w_{ Q_{ N_1^2 }}(\mathbb{R}^{d+1})^\frac{1}{p} \big)^2 \bigg)^\frac12
	\\
	&\sim
	N_2^\varepsilon  N_1^{\frac{2(d+1)}{p}}
	\big( \sum_{ \theta \in \mathcal{C}_{N_1^{-1}} }  |a_{N_1c_\theta}|^2 \big)^\frac12
	\end{align*}
	and hence conclude the proof by combining this with \eqref{e:ExtensionTori2}.

    \if0
	{\color{red}From the same argument together with Corollary \ref{Cor:TwoScaleDecoup} yields the case $N_2 \le \frac{1}{100^d} N_1^\frac12$.}
	
	Finally, we are left to show the case of $N_1^\frac12 \lesssim N_2 \lesssim N_1$. However, this is an immediate consequence from the case of $N_1\sim N_2$. In fact, for $\phi$ satisfying \eqref{e:AssumpStri}, we may regard ${\rm supp}\,\hat{\phi} \subset B_{N_1}$ since $N_2\lesssim N_1$.  Hence as we have already ensured the cases $N_1\sim N_2$, we have that 
	$$
	\| e^{it\Delta} \phi \|_{L^p_{t,x}(\mathbb{T}^{d+1})}
	\lesssim_\varepsilon N_1^\varepsilon \| \phi\|_{L^2(\mathbb{T}^d)}. 
	$$
	This concludes \eqref{e:ShellStri} since $N_1^\varepsilon  \lesssim N_2^{2\varepsilon}$. 
    \fi 
	\end{proof}
	
	\if0 	
	\begin{proof}
	In view of the property of $w_{B_{N^2}}$, we write 
	\begin{align*}
		\| E_{\mathbb{P}^{d} } g \|_{L^p(B_{N^2})}
		\lesssim&
		\| w_{B_{N^2}} E_{\mathbb{P}^{d} } g \|_{L^p(\mathbb{R}^{d+1})}\\
		=& 
		\| \sum_{\theta: \frac1N\times \cdots \times \frac1N} w_{Q_{N^2}} E_{\mathbb{P}^{d} } g_\theta \|_{L^p(\mathbb{R}^{d+1})} \\
		=:&
		\| \sum_{\theta: \frac1N\times \cdots \times \frac1N} f_\theta \|_{L^p(\mathbb{R}^{d+1})}, 
	\end{align*}
	where 
	$$
	f_\theta:= w_{Q_{N^2}} E_{\mathbb{P}^{d} } g_\theta
	$$
	satisfies 
	$$
	{\rm supp}\, \hat{f_\theta} 
	\subset 
	{\rm supp}\, \mathcal{F}_{x,t} \big[ E_{\mathbb{P}^{d} } g_\theta  \big] 
	+
	{\rm supp}\,  \widehat{ w_{Q_{N^2}} }
	\subset 
	{\rm supp}\, \mathcal{F}_{x,t} \big[ E_{\mathbb{P}^{d} } g_\theta  \big] 
	+ 
	Q_{\frac{10}{N^2}}
	\subset \mathcal{N}_{N^{-2}} (\theta).
	$$
	Hence we may apply Claim \ref{cl:ShellDecoupFatten} to conclude that 
	\begin{align*}
	\| E_{\mathbb{P}^{d} } g \|_{L^p(Q_{N^2})}
	\lesssim&
	\| \sum_{\theta: \frac1N\times \cdots \times \frac1N} f_\theta \|_{L^p(\mathbb{R}^{d+1})}\\
	\le& 
	C_\varepsilon N^\varepsilon \big( \sum_{\theta: \frac1N\times \cdots \times \frac1N} \| f_\theta\|_{L^p(\mathbb{R}^{d+1})}^2 \big)^\frac12 \\
	=& 
	C_\varepsilon N^\varepsilon \big( \sum_{\theta: \frac1N\times \cdots \times \frac1N} \| w_{Q_{N^2}} E_{\mathbb{P}^{d} } g_\theta \|_{L^p(\mathbb{R}^{d+1})}^2 \big)^\frac12 \\
	\le& 
	C_\varepsilon N^\varepsilon \big( \sum_{\theta: \frac1N\times \cdots \times \frac1N} \| E_{\mathbb{P}^{d} } g_\theta \|_{L^p(w_{Q_{N^2}})}^2 \big)^\frac12 
	\end{align*}
	since $w_{Q_{N^2}}^3 \lesssim 1 $. 
	\end{proof}
	
	\fi
	\subsection{Sharpness of Theorem \ref{theorem:ShellDecoupling}}
	Let us prove the sharpness of the range $p\le \frac{2(d+1)}{d-1}$ in Theorem \ref{theorem:ShellDecoupling}. 
	Namely we prove that if one has 
	\begin{equation}\label{e:DecoupNoloss}
	\| f \|_{L^p(\mathbb{R}^{d+1})}
	\le C  
	\big( 
	\sum_{\theta \in \mathcal{C}_{N^{-1}}} 
	\| f_{\theta} \|_{L^p(\mathbb{R}^{d+1})}^2 
	\big)^\frac12
	\end{equation}
	for all $N\gg1$ and all $f$ satisfying the shell constraint \eqref{e:ShellDecoup}, then $p \le \frac{2(d+1)}{d-1}$ is necessary. 
	To this end we let 
	$$
	\mathcal{S}_N 
	:= 
	\{ (\xi,\tau) \in \mathbb{R}^{d+1}: |\tau - |\xi|^2| \le \frac{1}{100N^2},\; 1 \le |\xi| \le 1 + \frac{1}{10N^2}  \}
	$$
	which is an $N^{-2}$-neighborhood of translated  $d-1$ dimensional sphere $\mathbb{S}^{d-1}$, and let $\hat{f} = \1_{\mathcal{S}_N}$ which is a characteristic function on $\mathcal{S}_N$. 
	Then it is straightforward to check the condition \eqref{e:ShellDecoup} for such $f$. 
	To give a lower bound of the left hand side of \eqref{e:DecoupNoloss} we compute $f$ 
	\begin{align*}
	|f(x,t)|
	&= 
	\big|\int_{ | \tau - |\xi|^2| \ \le \frac{1}{100N^2},\; 1\le |\xi|\le 1 + \frac1{10N^2} } 
	e^{2\pi i (x\cdot \xi + t \tau)}\, d\xi d\tau\big|\\
	&= 
	\big|
	\int_{\xi: 1\le |\xi| \le 1+ \frac1{10N^2}} e^{2\pi ix\cdot \xi}
	\big(
	\int_{\tau: |\tau-|\xi|^2|\le \frac{1}{100N^2}}
	e^{2\pi i t(\tau - 1)}
	\, d \tau 
	\big)\, d\xi
	\big|. 
	\end{align*}
	One can explicitly compute the integral as 
	$$
	\int_{\tau: |\tau-|\xi|^2|\le \frac{1}{100N^2}}
	e^{2\pi i t(\tau - 1)}
	\, d\tau
	= 
	c t^{-1} \sin \big( \frac{ 2\pi t }{100N^2} \big)
	e^{2\pi i (|\xi|^2-1)}
	$$
	and hence 
	\begin{align*}
	|f(x,t)|
	&=
	\frac1{100N^2} 
	\big| 
	\frac{ \sin \big( \frac{ 2\pi t }{100N^2} \big) }{ \frac{ 2\pi t }{100N^2} }
	\int_{\xi: 1\le |\xi|\le 1 + \frac1{10N^2}} 
	e^{2\pi i (x\cdot \xi + |\xi|^2 -1)}\, d\xi 
	\big|\\
	&\ge 
	\frac1{100N^2} 
	\big| 
	\frac{ \sin \big( \frac{ 2\pi t }{100N^2} \big) }{ \frac{ 2\pi t }{100N^2} }
	\big|
	\frac{1}{N^2} \chi_{\{ |x| \le \frac1{10}\}}\\
	&\ge 
	C N^{-2} N^{-2} \chi_{\{|x| \le \frac1{10}, |t| \le \frac{N^2}{10} \}}.  
	\end{align*}
	This reveals a lower bound 
	$$
	\| f \|_{L^p(B_{N^2})} 
	\ge C N^{-4 + \frac2p}. 
	$$
	 On the other hand $\hat{f_\theta} = \1_{\theta \cap \mathcal{S}_N}$ and $\theta' := \theta \cap \mathcal{S}_N$ is an $\underbrace{ N^{-1}\times \cdots \times N^{-1}}_{d-1} \times N^{-2} \times N^{-2}$ slab for each $\theta \in \mathcal{C}_{N^{-1}}$.  Hence we see from Hausdorff--Young's inequality that 
	 $$
	 \| f_\theta \|_{L^p(\mathbb{R}^{d+1})}
	 \lesssim 
	 \big( N^{d-1} N^2N^2 \big)^{\frac1p-1}
	 $$
	 from which we derive the bound for the right hand side of \eqref{e:DecoupNoloss} 
	 $$
	 \big( 
	 \sum_{\theta \in \mathcal{C}_{N^{-1}}}
	 \| f_\theta \|_{L^p(\mathbb{R}^{d+1})}^2
	 \big)^\frac12
	 \sim 
	 N^{-(d+3)}  N^{\frac1p( d+3)} 
	 \big( 
	 \sharp \{ \theta \in \mathcal{C}_{N^{-1}}: \theta \cap \mathcal{S}_N \neq \emptyset\}
	 \big)^\frac12. 
	 $$
	 Since $\mathcal{S}_N$ is $N^{-2}$-neighborhood of $d-1$ dimensional sphere, we know $ \sharp \{ \theta \in \mathcal{C}_{N^{-1}}: \theta \cap \mathcal{S}_N \neq \emptyset\} \sim N^{d-1}$. 
	 Putting altogether with the assumption \eqref{e:DecoupNoloss} we conclude $p \le \frac{2(d+1)}{d-1}$.

	In addition, when $p= \frac{2(d+1)}{d-1}$, $\varepsilon$-derivative loss $N^{\varepsilon}$ on the right-hand side of \eqref{e:DecoupFat} cannot be removed. 
	Namely, there exists $f$ satisfying \eqref{e:ShellDecoup} such that
	\[
	\|f\|_{L^{\frac{2(d+1)}{d-1}}} \gtrsim (\log N)^{\frac{d-1}{2(d+1)}} \big( \sum_{\theta \in \mathcal{C}_{N^{-1}}} \|f_\theta\|_{L^p(\mathbb{R}^{d+1})}^2 \big)^\frac12.
	\]
	To see this, from the observation in Subsection \ref{subsection3.5}, it suffices to find $\{a_k\}_{k \in \mathcal{S}_N}$ such that
	\begin{equation}
	\label{est:NecessityofEpsilon}
	\| \sum_{k \in \tilde{\mathcal{S}}_N} a_k e^{i (k \cdot x + |k|^2 t)} \|_{L_{t,x}^{\frac{2(d+1)}{d-1}}(\T^{d+1})}
	\gtrsim (\log N)^{\frac{d-1}{2(d+1)}} \|a_k\|_{\ell^2}.
	\end{equation}
	Here $\tilde{\mathcal{S}}_N:= \{ k \in \mathbb{Z}^d : N -1 \le |k| \le N + 1\}$. We set
	\[
	a_k = 
	\begin{cases}
	1 \quad & \mathrm{if} \ k \in \{ (N, \overline{k}_1, \ldots, \overline{k}_{d-1}) \in \Z^{d} \, : \, 1 \leq \overline{k}_j \leq d^{-\frac12} N^{\frac12} \ (j=1,\ldots,d-1)\},\\
	0 \quad & \mathrm{otherwise}.
	\end{cases}
	\]
	Then, \eqref{est:NecessityofEpsilon} is written as the lower bound of the exponential sum
	\[
	\bigl\| \sum_{1 \leq \overline{k}_1 \leq d^{-\frac12} N^{\frac12}} \dots 
	\sum_{1 \leq \overline{k}_{d-1} \leq d^{-\frac12} N^{\frac12}}  e^{i (\overline{k} \cdot \overline{x} + |\overline{k}|^2 t)} \bigr\|_{L_{t,\overline{x}}^{\frac{2(d+1)}{d-1}}(\T^{d})}
	\gtrsim (\log N)^{\frac{d-1}{2(d+1)}} N^{\frac{d-1}{4}}.
	\]
	This can be shown via a number theoretic approach. See Theorem 13.6 in \cite{Dem}.
\section{Proof of Theorem~\ref{theorem:main}}
\label{section:ProofMainTheorem}
In this section, we prove Theorem \ref{theorem:main}. 
We will show that the first order system \eqref{Zakharov2} is locally well-posed in $H^s (\T^d) \times H^{s-\frac12}(\T^d)$ if $s > s_0$. 

We write
\[
\mathcal{J}_S[F](t) = -i \int_0^t e^{i(t-t')\Delta}F(t') d t', \quad 
\mathcal{J}_{W_{\pm}}[G](t) = i \int_0^t e^{\mp i(t-t')\langle \nabla \rangle}  G(t') d t,
\]
and rewrite the system \eqref{Zakharov2} in integral form: 
\begin{align}
u(t) &= e^{it\Delta} u_0  + \frac{1}{2} \mathcal{J}_S[(w + \overline{w})u](t),\\
w(t) & = e^{-it \langle \nabla \rangle} w_0 + \mathcal{J}_{W_{+}} \Bigl[\frac{\Delta}{\langle \nabla \rangle} (u \overline{u}) + \frac{1}{2 \langle \nabla \rangle} (w + \overline{w}) \Bigr](t).
\end{align}

The following bilinear estimates play a crucial role.
\begin{proposition}
\label{proposition:KeyBilinearEst}
Let $s_0$ be as defined in~\eqref{assumption:regularity} and $s>s_0$. Then there exist $b>\frac12$ and $\delta >0$ such that 
\begin{align}
& \|u w\|_{X_S^{s,b-1+ \delta}} + \|u \overline{w}\|_{X_S^{s,b-1+\delta}} \lesssim \|u \|_{X_S^{s,b}} \|w\|_{X_{W_{+}}^{s-\frac12,b}}, \label{est:prop1.3-1}\\
& \|u \overline{u}\|_{X_{W_{+}}^{s+\frac12,b-1 + \delta}} \lesssim \|u\|_{X_S^{s,b}}^2.\label{est:prop1.3-2}
\end{align}
\end{proposition}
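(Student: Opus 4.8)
The plan is to reduce, in the standard way, both estimates to a single pointwise bound for a normalized trilinear form. By duality together with a Littlewood--Paley decomposition of each function in frequency and modulation, it suffices to control
\[
\mathcal{I} := \Bigl| \int_{\R \times \T^d} f_0\, f_1\, f_2 \, dt\, dx \Bigr|,
\]
where each $f_i$ has space-time Fourier support, up to complex conjugation, in a region of the form $\{ |k| \sim N_i,\ |\tau + \Phi_i(k)| \lesssim L_i \}$ with $\Phi_i \in \{ |k|^2, \langle k \rangle \}$: for \eqref{est:prop1.3-1} the slot $i=2$ is a wave slot and $i=0,1$ are Schr\"odinger slots, while for \eqref{est:prop1.3-2} the slot $i=0$ is a wave slot and $i=1,2$ are Schr\"odinger slots. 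Re-summing the dyadic pieces, the two estimates follow once one proves
\[
\mathcal{I} \lesssim_\varepsilon L_{\mathrm{med}}^{1/2}\, (N_0 N_1 N_2)^\varepsilon\, \mathcal{M}(N_0, N_1, N_2)\, \prod_{i=0}^{2} \| f_i \|_{L^2_{t,x}}
\]
for an appropriate monomial $\mathcal{M}$ in the frequencies: once the Sobolev weights $N_i^{s}$ (or $N_i^{s-1/2}$ on the wave slot) and the modulation weights $L_i^{b}$, $L_0^{b-1+\delta}$ are attached, the modulation weights leave a convergent geometric surplus whenever $b$ is slightly larger than $\tfrac12$, and the $N_i^\varepsilon$-losses are harmless because $s > s_0$ strictly.

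Next I would record the resonance identity. In the frequency interactions contributing to $\mathcal{I}$ one has $k_0 = k_1 \pm k_2$ and $\tau_0 = \tau_1 \pm \tau_2$, and hence
\[
L_{\max} \gtrsim \bigl| \Phi_0(k_0) - \Phi_1(k_1) - \Phi_2(k_2) \bigr| = \bigl| \pm|k_0|^2 \mp |k_1|^2 - \langle k_2 \rangle \bigr|
\]
(with the roles of the wave and Schr\"odinger slots interchanged for \eqref{est:prop1.3-2}). The point, which is exactly the mechanism advertised in the introduction, is that in the genuinely resonant regime --- when $L_{\max}$ is small relative to the square of the largest frequency --- this algebraic relation forces the two Schr\"odinger frequencies to agree in modulus up to an error $\lesssim 1 + (N_{\mathrm{wave}} + L_{\max}) N_{\max}^{-1}$, so after one more harmless decomposition each Schr\"odinger factor has Fourier support in a shell $\{ c_* - 1 \le |k| \le c_* + 1 \}$ with $c_* \sim N_{\max}$ (and inside a ball of radius $\sim N_{\mathrm{wave}}$ when the wave frequency is the small one). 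This is precisely the hypothesis of the shell Strichartz estimate, Theorem~\ref{theorem:StrichartzShell} and its transferred form Corollary~\ref{corollary:StrichartzShell}, and it is the use of the wider exponent range $p \le \frac{2(d+1)}{d-1}$ there --- rather than the Bourgain--Demeter range $\frac{2(d+2)}{d}$ available without the shell constraint --- that produces the improvement of $s_0$ over Kishimoto's threshold.

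The core of the argument is then a case analysis by frequency interaction (high $\times$ high $\to$ low, low $\times$ high $\to$ high, and the symmetric variant) and, within each, by which of $L_0, L_1, L_2$ is largest. When $L_{\max}$ is comparable to or larger than $N_{\max}^2$ (the non-resonant cases) one has a large modulation to spend, and the bound follows from H\"older in $L^p_{t,x}$ together with the ordinary Strichartz estimates --- Corollary~\ref{corollary:StrichartzS} on the Schr\"odinger slots and Corollary~\ref{corollary:StrichartzW} on the wave slot --- after possibly peeling off one factor in $L^2$. When $L_{\max} \ll N_{\max}^2$ one invokes the shell reduction above and estimates the two Schr\"odinger factors by Corollary~\ref{corollary:StrichartzShell} at $p = \frac{2(d+1)}{d-1}$ and the wave factor by the endpoint-type wave Strichartz of Corollary~\ref{corollary:StrichartzW}; choosing the H\"older triple correctly and counting the number of $\tfrac1{N_{\max}}$-caps needed to tile the relevant shell (or shell-cap) yields exactly the monomial $\mathcal{M}$ corresponding to the thresholds in \eqref{assumption:regularity}. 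For \eqref{est:prop1.3-2} the two Schr\"odinger slots are $u$ and $\bar u$ while the output is a wave slot; the analysis runs in parallel, the half-derivative discrepancy between $s + \tfrac12$ on the left and $2s$ on the right being recovered from the bilinear/curvature gain in exactly the high $\times$ high $\to$ low regime where it is needed.

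I expect the main obstacle to be the bookkeeping in the resonant high $\times$ high interactions: one must (i) carry out the shell-and-ball reduction uniformly over all dyadic modulation parameters and over the direction of the small (wave) frequency; (ii) select the H\"older exponent triple so that each factor falls in a range where the corresponding Strichartz inequality is actually available --- this is delicate near the wave admissibility endpoint and is ultimately what forces the non-scaling value $s_0 = \tfrac34$ when $d = 4$ while still giving the sharp $s_0 = \tfrac{d-3}{2}$ for $d \ge 5$; and (iii) verify that the cumulative $N_i^\varepsilon$-losses and the $L_{\mathrm{med}}^{1/2}$ loss are genuinely absorbed upon summing the geometric series in the dyadic parameters, which is where the strict inequality $s > s_0$ together with a suitable choice of $b$ close to $\tfrac12$ and small $\delta$ enter.
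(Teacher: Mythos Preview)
Your strategy is essentially the paper's: the same duality and dyadic reduction to a single trilinear bound, the same resonance identity forcing the two Schr\"odinger frequencies onto a thin shell, the same case split (high--high with small wave frequency versus high--low), and in the resonant regime the same H\"older splitting that feeds the two Schr\"odinger pieces into the shell Strichartz estimate at $p=\frac{2(d+1)}{d-1}$ and the wave piece into Corollary~\ref{corollary:StrichartzW}.

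One point is misstated, though the method survives it. The modulation dependence you write, $L_{\mathrm{med}}^{1/2}$, is not what this argument produces and is not provable by it: every application of a transferred Strichartz estimate via Lemma~\ref{lemma:TransferencePrinciple} costs a full factor $L_i^{1/2}$, and the almost-orthogonal reduction from general $L_i$ to unit modulation recovers exactly $(L_1L_2L_3)^{1/2}$. The bound one actually proves is the paper's \eqref{est:prop1.3-3}, with $(L_1L_2L_3)^{1/2-\delta}$ on the right-hand side; the small $\delta$ of room is obtained separately from the crude Cauchy--Schwarz bound \eqref{est:prop1.3-9} that dispatches the case $L_{\max}\gtrsim N_3^{4}$. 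Fortunately your summation sketch still closes with this larger factor, once one uses the resonance constraint $L_{\mathrm{output}}\lesssim\max(L_{\mathrm{in}_1},L_{\mathrm{in}_2},N_{\max}N_{\mathrm{wave}})$ and chooses $b-\tfrac12$ small relative to~$\delta$. A second, minor point: the clause about ``counting $\tfrac{1}{N_{\max}}$-caps'' is a red herring here --- the cap structure is already packaged inside Corollary~\ref{corollary:StrichartzShell}, and the frequency monomial $\mathcal M$ comes from the derivative loss in the wave Strichartz estimate (plus Bernstein when $d\ge5$), not from any cap count.
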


\begin{proof}
By duality and dyadic decomposition of the space-time Fourier supports of the functions, 
to prove \eqref{est:prop1.3-1} and \eqref{est:prop1.3-2}, it suffices to prove that if $s > s_0$, there exists small $\delta >0$ such that
\begin{equation}\label{est:prop1.3-3}
\begin{split}
&\Bigl| \int u_{1} \overline{v}_{2} w_{3,\pm} dt dx \Bigr|\\
& \lesssim (L_1L_2 L_3)^{\frac12-\delta} \Bigl(\frac{\min(N_1,N_2)}{\max(N_1,N_2)} \Bigr)^{\frac12}N_{\min}^{s-\frac12} \|u_{1}\|_{L_{t,x}^2} \|v_{2}\|_{L_{t,x}^2} \|w_{3,\pm} \|_{L_{t,x}^2},
\end{split}
\end{equation}
where $u_1=  P_{N_1, L_1}^S u$, $v_2 = P_{N_2,L_2}^S v$, $w_{3,\pm} = P_{N_3,L_3}^{W_{\pm}} w$, and $N_{\min}= \min (N_1,N_2,N_3)$. 
A similar reduction can be found in \cite{BHHT09}.
Next, we do a case-by-case analysis.\\
(1) \underline{$N_3 \lesssim N_1\sim N_2$}: 
In this case, our goal is the following estimate:
\begin{equation}\label{est:prop1.3-8}
\Bigl| \int u_{1} \overline{v}_{2} w_{3,\pm} dt dx \Bigr| \lesssim (L_1L_2 L_3)^{\frac12-\delta} N_{3}^{s-\frac12} \|u_{1}\|_{L_{t,x}^2} \|v_{2}\|_{L_{t,x}^2} \|w_{3,\pm} \|_{L_{t,x}^2}.
\end{equation}
First, we notice that from Plancherel's theorem and the Cauchy--Schwarz inequality, we have
\begin{align}
\notag
\Bigl| \int u_{1} \overline{v}_{2} w_{3,\pm} dt dx \Bigr| & \sim 
\Bigl| \int \bigl( \widetilde{u}_{1} * \widetilde{w}_{3,\pm} \bigr) \overline{\widetilde{v}}_{2} d \tau d k \Bigr|\\
\notag
& = \Bigl| \int \int  \widetilde{u}_{1} (\tau-\tau', k-k')\widetilde{w}_{3,\pm}(\tau',k') d\tau' dk'  \overline{\widetilde{v}}_{2} (\tau,k) d \tau d k \Bigr|\\
\label{est:prop1.3-9}
& \lesssim L_{\min}^{\frac12}N_{3}^{\frac{d}{2}} \|u_1\|_{L_{t,x}^2}  \|v_2\|_{L_{t,x}^2} \|w_{3,\pm}\|_{L_{t,x}^2}.
\end{align}
Here $L_{\min} = \min (L_1,L_2,L_3)$. The estimate \eqref{est:prop1.3-9} immediately gives \eqref{est:prop1.3-8} if $L_{\max} := \max(L_1,L_2,L_3) \gtrsim N_3^4$. Hence, we may assume $L_{\max} \leq N_3^4$, which means that it is sufficient to show \eqref{est:prop1.3-8} with $\delta=0$. 
Using the definition of $s_0$, it suffices to prove that for any $\varepsilon>0$, we have the following estimate:
\begin{equation}\label{est:prop1.3-4}
\Bigl| \int u_{1} \overline{v}_{2} w_{3,\pm} dt dx \Bigr|\\
\lesssim (L_1L_2 L_3)^{\frac12} C(N_3) \|u_{1}\|_{L_{t,x}^2} \|v_{2}\|_{L_{t,x}^2} \|w_{3,\pm} \|_{L_{t,x}^2},
\end{equation}
where
\[
C(N_3) := 
\begin{cases}
N_{3}^{\varepsilon} \quad &(d=3),\\
N_3^{\frac{1}{4}+\varepsilon} & (d=4),\\
N_{3}^{\frac{d}{2} -2+ \varepsilon} & (d \geq 5).
\end{cases}
\]
By almost orthogonality and Fubini's theorem, it suffices to prove that, for arbitrary $c_i \in \R$ $(i=1,2,3)$, it holds that
\begin{equation}\label{est:prop1.3-5}
\Bigl| \int u_{1,c_1} \overline{v}_{2,c_2} w_{3,\pm,c_3} dt dx \Bigr|\\
 \lesssim C(N_3) \|u_{1,c_1}\|_{L_{t,x}^2} \|v_{2,c_2}\|_{L_{t,x}^2} \|w_{3,\pm,c_3} \|_{L_{t,x}^2},
\end{equation}
where
\begin{align}
\label{ass:prop1.3-support-01}
\supp \widetilde{u}_{1,c_1} & \subset \{(\tau,k) \in \R \times \Z^d \, : \, |\tau - |k|^2 -c_1| \leq 1, &  & k \in B_{N_3}(\xi_1)\},\\ 
\label{ass:prop1.3-support-02}
\supp \widetilde{v}_{2,c_2} & \subset \{(\tau,k) \in \R \times \Z^d \, : \, |\tau - |k|^2 -c_2| \leq 1, & & k \in B_{N_3}(\xi_2)\},\\ 
\label{ass:prop1.3-support-03}
\supp \widetilde{w}_{3, \pm,c_3} & \subset \{(\tau,k) \in \R \times \Z^d \, : \, |\tau \mp \langle k \rangle -c_3| \leq 1, &  \langle & k \rangle \lesssim N_3\}.
\end{align}
Here, for $j=1,2$, $B_{N_3 (\xi_j)}$ denotes the ball of radius $N_3$ and centre $\xi_j \in \R^d$ satisfying $|\xi_j| \sim N_j$.
We consider~\eqref{est:prop1.3-5}. By Plancherel's theorem, we have
\begin{align*}
\Bigl| \int u_{1,c_1} \overline{v}_{2,c_2} w_{3,\pm,c_3} dt dx \Bigr| & \sim 
\Bigl| \int \bigl( \widetilde{u}_{1,c_1} * \widetilde{w}_{3,\pm,c_3} \bigr) \overline{\widetilde{v}}_{2,c_2} d\tau d k \Bigr|\\
 = \Bigl| \iint & \widetilde{u}_{1,c_1} (\tau-\tau', k-k')\widetilde{w}_{3,\pm,c_3}(\tau',k') d\tau' dk'  \overline{\widetilde{v}}_{2,c_2} (\tau,k) d\tau d k \Bigr|.
\end{align*}
The support conditions on $\widetilde{u}_1$, $\widetilde{v}_2$, $\widetilde{w}_{3,\pm}$ imply that, in the above, we may assume
\[
|\tau - \tau' - |k-k'|^2 -c_1| \leq 1, \quad | \tau -|k|^2 -c_2 | \leq 1, \quad | \tau' \mp \langle k' \rangle -c_3| \leq 1.
\]
This implies
\begin{align*}
3& \geq \bigl|\tau - \tau' - |k-k'|^2 -c_1 - ( \tau -|k|^2-c_2)  + \tau' \mp \langle k' \rangle -c_3 \bigr|\\
& \geq \bigl| |k-k'|^2-|k|^2 +\widetilde{c} \pm \langle k' \rangle \bigr| ,
\end{align*}
where $\widetilde{c} = c_1-c_2+c_3$. Hence, from the spatial support condition of $\widetilde{w}_{3,\pm,c_3}$ in \eqref{ass:prop1.3-support-03}, we have $\bigl| |k-k'|^2-|k|^2 +\widetilde{c}   \, \bigr| \lesssim N_{3} \lesssim N_1$. 
This implies that if there exists $M_1 >0$ such that $|k|^2 = M_1 + \mathcal{O}(N_{1})$, we may find $M_2>0$ such that $|k-k'|^2 = M_2 + \mathcal{O}(N_{1})$. 
The support conditions imply $M_1 \sim M_2 \sim N_1^2$ and then, by the almost orthogonality, in addition to the above support conditions~\eqref{ass:prop1.3-support-01} and \eqref{ass:prop1.3-support-02}, we may assume that there exist $d_1 \sim d_2 \sim N_1$ such that
\begin{align*}
& \supp \widetilde{u}_{1,c_1} \subset \{(\tau,k) \in \R \times \Z^d \, : \, d_1- 1 \leq |k| \leq d_1 + 1\},\\
& \supp \widetilde{v}_{2,c_2} \subset \{(\tau,k) \in \R \times \Z^d \, : \, d_2- 1 \leq |k| \leq d_2 + 1\}.
\end{align*}
Therefore, by using Corollary~\ref{corollary:StrichartzShell}, we obtain
\begin{equation}\label{est:prop1.3-6}
\|u_{1,c_1}\|_{L_{t,x}^{\frac{2(d+1)}{d-1}}} \lesssim N_{3}^{\frac{\varepsilon}{2}} \|u_{1,c_1}\|_{L_{t,x}^2}, \quad 
\|v_{2,c_2}\|_{L_{t,x}^{\frac{2(d+1)}{d-1}}} \lesssim N_{3}^{\frac{\varepsilon}{2}} \|v_{2,c_2}\|_{L_{t,x}^2}.
\end{equation}
In the case $d=3$, these estimates immediately yield~\eqref{est:prop1.3-5} as follows:
\begin{align*}
\Bigl| \int u_{1,c_1} \overline{v}_{2,c_2} w_{3,\pm,c_3} dt dx \Bigr| & \leq 
\|u_{1,c_1}\|_{L_{t,x}^4} \|v_{2,c_2}\|_{L_{t,x}^4} \|w_{3,\pm,c_3}\|_{L_{t,x}^2}\\
& \lesssim N_{3}^{\varepsilon} \|u_{1,c_1}\|_{L_{t,x}^2} \|v_{2,c_2}\|_{L_{t,x}^2} \|w_{3,\pm,c_3} \|_{L_{t,x}^2}.
\end{align*}
In the case $d=4$, it follows from \eqref{est:StrichartzW} with $p=q=\frac{10}{3}$ that
\[
\|w_{3,\pm,c_3}\|_{L_{t,x}^{\frac{5}{2}}} \leq \|w_{3,\pm,c_3}\|_{L_{t,x}^{\frac{10}{3}}}^{\frac12}
\|w_{3,\pm,c_3}\|_{L_{t,x}^2}^{\frac12} \lesssim N_3^{\frac14} \|w_{3,\pm,c_3}\|_{L_{t,x}^{2}}.
\]
This estimate and \eqref{est:prop1.3-6} provide that
\begin{align*}
\Bigl| \int u_{1,c_1} \overline{v}_{2,c_2} w_{3,\pm,c_3} dt dx \Bigr| & \leq 
\|u_{1,c_1}\|_{L_{t,x}^{\frac{10}{3}}} \|v_{2,c_2}\|_{L_{t,x}^{\frac{10}{3}}} \|w_{3,\pm,c_3}\|_{L_{t,x}^{\frac{5}{2}}}\\
& \lesssim  N_3^{\frac{1}{4}+\varepsilon}\|u_{1,c_1}\|_{L_{t,x}^2} \|v_{2,c_2}\|_{L_{t,x}^2} \|w_{3,\pm,c_3} \|_{L_{t,x}^2}.
\end{align*}
Lastly, we consider the case $d \geq 5$. Let
\[
\Bigl( \frac{1}{q_w}, \frac{1}{p_w} \Bigr) = \Bigl( \frac{2}{d+1}, \frac{1}{2}- \frac{4}{d^2-1} \Bigr).
\]
Notice that $\frac{1}{q_w} = \frac{d-1}{2}(\frac12 - \frac{1}{p_w})$. 
Then, \eqref{est:StrichartzW} yields
\begin{equation}\label{est:prop1.3-7}
\|w_{3, \pm, c_3}\|_{L_t^{q_w} L_x^{p_w}} \lesssim N_3^{\frac{2}{d-1}} \|w_{3,\pm,c_3}\|_{L_{t,x}^2}.
\end{equation}
Consequently, by using Bernstein inequality, \eqref{est:prop1.3-6} and \eqref{est:prop1.3-7}, we get
\begin{align*}
\Bigl| \int u_{1,c_1} \overline{v}_{2,c_2} w_{3,\pm,c_3} dt dx \Bigr| & \leq 
\|u_{1,c_1}\|_{L_{t,x}^{\frac{2(d+1)}{d-1}}} \|v_{2,c_2}\|_{L_{t,x}^{\frac{2(d+1)}{d-1}}} \|w_{3,\pm,c_3}\|_{L_{t,x}^{\frac{d+1}{2}}}\\
& \lesssim N_{3}^{\varepsilon} \|u_{1,c_1}\|_{L_{t,x}^2} \|v_{2,c_2}\|_{L_{t,x}^2} N_{3}^{\frac{d}{2}-\frac{2 d}{d-1}} \|w_{3,\pm,c_3} \|_{L_t^{q_w} L_x^{p_w}}\\
& \lesssim N_{3}^{\frac{d}{2}-2+\varepsilon}
\|u_{1,c_1}\|_{L_{t,x}^2} \|v_{2,c_2}\|_{L_{t,x}^2} \|w_{3,\pm,c_3} \|_{L_{t,x}^2},
\end{align*}
which completes the proof of~\eqref{est:prop1.3-5}. 


(2) \underline{$\min(N_1,N_2) \ll  N_3$}: 
By symmetry, we assume that $N_2 \leq N_1$, namely, $N_2 \ll N_1 \sim N_3$. 
The estimate \eqref{est:prop1.3-3} is written as
\begin{equation}
\label{est:prop1.3-9.5}
\Bigl| \int u_{1} \overline{v}_{2} w_{3,\pm} dt dx \Bigr|
 \lesssim (L_1L_2 L_3)^{\frac12-\delta} N_1^{-\frac12}N_{2}^{s} \|u_{1}\|_{L_{t,x}^2} \|v_{2}\|_{L_{t,x}^2} \|w_{3,\pm} \|_{L_{t,x}^2}.
\end{equation}
By Plancherel's theorem, we have
\begin{align*}
\Bigl| \int u_{1} \overline{v}_{2} w_{3,\pm} dt dx \Bigr| & \sim 
\Bigl| \int \bigl( \widetilde{u}_{1} * \widetilde{w}_{3,\pm} \bigr) \overline{\widetilde{v}}_{2} d\tau d k \Bigr|\\
& = \Bigl| \int \int  \widetilde{u}_{1} (\tau-\tau', k-k')\widetilde{w}_{3,\pm}(\tau',k') d\tau' dk'  \overline{\widetilde{v}}_{2} (\tau,k) d\tau d k \Bigr|.
\end{align*}
This implies that we may assume
\begin{align*}
3L_{\max} & \geq \bigl|\tau - \tau' + |k-k'|^2 - ( \tau +|k|^2)  + \tau' \pm \langle k' \rangle \bigr|\\
& \geq \bigl| |k-k'|^2-|k|^2 \bigr| - \langle k' \rangle.
\end{align*}
Thus, because $N_2 \ll N_1 \sim N_3$, we have $L_{\max} \gtrsim N_1^2$. 
Let us first consider the case $L_3 = L_{\max} \gtrsim N_1^2$. 
It follows from Bernstein inequality and ~\eqref{est:StrichartzS} with $(q, p)=(4,\frac{2 d}{d-1})$ that
\begin{align*}
\Bigl| \int u_{1} \overline{v}_{2} w_{3,\pm} dt dx \Bigr| & 
\leq \|u_1 \|_{L_t^4 L_x^{\frac{2d}{d-1}}} \|v_2\|_{L_t^4 L_x^{2d}} \|w_3\|_{L_{t,x}^2}\\
& \lesssim (L_3 N_1^{-2})^{\frac12 - 3 \delta}\|u_1 \|_{L_t^4 L_x^{\frac{2d}{d-1}}} N_2^{\frac{d-2}{2}} \|v_2\|_{L_t^4 L_x^{\frac{2d}{d-1}}}\|w_3\|_{L_{t,x}^2}\\
& \lesssim (L_1 L_2 L_3)^{\frac12 - \delta}N_1^{-1 + 6 \delta +\varepsilon} N_2^{\frac{d-2}{2}}
\|u_{1}\|_{L_{t,x}^2} \|v_{2}\|_{L_{t,x}^2} \|w_{3,\pm} \|_{L_{t,x}^2}.
\end{align*}
This completes the proof of \eqref{est:prop1.3-9.5} in the case $L_3 = L_{\max}$.
We consider the remaining cases. Since the case $L_2 = L_{\max} \gtrsim N_1^2$ can be treated in a similar way, we only consider the case $L_1 = L_{\max} \gtrsim N_1^2$. 
First, we consider the case $d=3$. By H\"{o}lder's inequality, Sobolev embedding, \eqref{est:StrichartzW} and \eqref{est:StrichartzS}, we get
\begin{align*}
& \Bigl| \int u_{1} \overline{v}_{2} w_{3,\pm} dt dx \Bigr|\\
& \lesssim 
\|u_1\|_{L_{t,x}^2} \|v_2\|_{L_t^{\frac{10}{3}} L_x^{5}} \|w_{3,\pm}\|_{L_t^{5} L_x^{\frac{10}{3}}}\\
& \lesssim \|u_1\|_{L_{t,x}^2} N_2^{\frac{3}{10}} \|v_2\|_{L_{t,x}^{\frac{10}{3}}} L_3^{\frac12}N_3^{\frac{2}{5}} \|w_{3,\pm}\|_{L_{t,x}^2}\\
& \lesssim  (L_2L_3)^{\frac12} N_1^{\frac{2}{5}}  N_2^{\frac{3}{10}+\varepsilon} \|u_1\|_{L_{t,x}^2} \|v_2\|_{L_{t,x}^2} \|w_{3,\pm}\|_{L_{t,x}^2}\\
& \lesssim (L_1L_2 L_3)^{\frac12 - \delta} N_1^{-\frac35 + 6 \delta} N_2^{\frac{3}{10}+\varepsilon} \|u_1\|_{L_{t,x}^2} \|v_2\|_{L_{t,x}^2} \|w_{3,\pm}\|_{L_{t,x}^2},
\end{align*}
which completes the proof of \eqref{est:prop1.3-9.5} when $d=3$.
Next, we show \eqref{est:prop1.3-9.5} for the case $d \geq 4$. 
By almost orthogonality and Fubini's theorem, it is enough to prove that, for arbitrary $c_i \in \R$ $(i=2,3)$, it holds that
\begin{equation}
\label{est:prop1.3-10}
\Bigl| \int u_{1} \overline{v}_{2,c_2} w_{3,\pm,c_3} dt dx \Bigr|
 \lesssim L_1^{\frac12-3\delta} N_1^{-\frac12}N_{2}^{s} \|u_{1}\|_{L_{t,x}^2} \|v_{2}\|_{L_{t,x}^2} \|w_{3,\pm} \|_{L_{t,x}^2},
\end{equation}
where
\begin{align}
\label{ass:prop1.3-support-04}
\supp \widetilde{v}_{2,c_2} & \subset \{(\tau,k) \in \R \times \Z^d \, : \, |\tau - |k|^2 -c_2| \leq 1, \ |k| \sim N_2\},\\ 
\label{ass:prop1.3-support-05}
\supp \widetilde{w}_{3, \pm,c_3} & \subset \{(\tau,k) \in \R \times \Z^d \, : \, |\tau \mp \langle k \rangle -c_3| \leq 1, \ k \in B_{N_2}(\xi_3)\}
\end{align}
for some $\xi_3 \in \R^d$ which satisfies $|\xi_3| \sim N_3$. 
To obtain \eqref{est:prop1.3-10}, we will prove
\begin{equation}
\label{est:prop1.3-11}
\| v_{2,c_2} w_{3,\pm,c_3} \|_{L_{t,x}^2}
 \lesssim  N_1^{\frac{1}{d-1}} N_{2}^{\frac{d}{2} - \frac{d}{d-1}+ \varepsilon} \|v_{2,c_2}\|_{L_{t,x}^2} \|w_{3,\pm,c_3} \|_{L_{t,x}^2},
\end{equation}
where $v_2$ and $w_{3,\pm,c_3}$ satisfy \eqref{ass:prop1.3-support-04} and \eqref{ass:prop1.3-support-05}, respectively. It is straightforward to see that \eqref{est:prop1.3-11} yields \eqref{est:prop1.3-10}. 
Indeed, if \eqref{est:prop1.3-11} holds, since $L_1 \gtrsim N_1^2$, we have
\begin{align*}
\Bigl| \int u_{1} \overline{v}_{2,c_2} w_{3,\pm,c_3} dt dx \Bigr| & \leq \|u_1\|_{L_{t,x}^2} \| v_{2,c_2} w_{3,\pm,c_3} \|_{L_{t,x}^2}\\
& \lesssim L_1^{\frac12-3\delta} N_1^{-1 + 6 \delta} \|u_1\|_{L_{t,x}^2} \| v_{2,c_2} w_{3,\pm,c_3} \|_{L_{t,x}^2}\\
& \lesssim L_1^{\frac12-3\delta} N_1^{-\frac{d-2}{d-1} + 6 \delta}N_{2}^{\frac{d}{2} - \frac{d}{d-1}+ \varepsilon}
\|u_1\|_{L_{t,x}^2}\|v_{2,c_2}\|_{L_{t,x}^2} \|w_{3,\pm,c_3} \|_{L_{t,x}^2},
\end{align*}
which verifies \eqref{est:prop1.3-10} since $d \geq 4$. 
We turn to the proof of \eqref{est:prop1.3-11}. 
Let us assume $N_2 \gg 1$ since the case $N_2 \lesssim 1$ is easily handled by the Cauchy--Schwarz inequality. 
If $(\tau, k) \in \supp \widetilde{w}_{3, \pm,c_3}$, then \eqref{ass:prop1.3-support-05} gives
\begin{equation}
\label{est:prop1.3-12}
|\tau \mp \langle \xi_3 \rangle - c_3| \leq 2N_2.
\end{equation}
This means that the temporal frequency of $\widetilde{w}_{3, \pm,c_3}$ is confined to an interval of length $4N_2$. 
Hence, the standard almost orthogonality argument can be applied. We give a rough sketch of it.

Let a non-negative valued function $\psi_{N_2} \in \mathcal{S}(\R)$ satisfy 
\[
\supp \psi_{N_2} \subset [- 2N_2, 2 N_2], \qquad \sum_{m \in N_2 \Z} \psi_{N_2}(\tau-m) =1 \quad \mathrm{for} \ \mathrm{all} \ \tau \in \R.
\]
Here $N_2 \Z :=\{ N_2 \ell \, : \, \ell \in \Z\}$. 
For $m \in N_2 \Z$ and $v \in L^2(\R \times \T^d)$, define the operator $\F_{t,x}({\mathcal{P}_{N_2,m} v})(\tau,k)= \psi_{N_2} (\tau-m) \widetilde{v}(\tau,k)$. Then, clearly $\sum_{m \in N_2 \Z} \mathcal{P}_{N_2,m} v = v$. We observe
\begin{align*}
\| v_{2,c_2} w_{3,\pm,c_3} \|_{L_{t,x}^2} & \lesssim \Bigl( \sum_{m_1 \in N_2 \Z} \Bigl\|\mathcal{P}_{N_2,m_1} 
\Bigl( \bigl( \sum_{m_2 \in N_2 \Z} \mathcal{P}_{N_2,m_2} v_{2,c_2} \bigr) w_{3,\pm,c_3}\Bigr) \Bigr\|_{L_{t,x}^2}^2 \Bigr)^{\frac12}.
\end{align*}
We deduce from \eqref{est:prop1.3-12} that for each $m_1 \in N_2 \Z$ there exists the set $M_{N_2,m_1} \subset N_2 \Z$ such that
\begin{align*}
& \mathcal{P}_{N_2,m_1} \Bigl( \bigl( \sum_{m_2 \in M_{N_2,m_1}} \mathcal{P}_{N_2,m_2} v_{2,c_2} \bigr) w_{3,\pm,c_3}\Bigr) =
\mathcal{P}_{N_2,m_1} \Bigl( \bigl( \sum_{m_2 \in N_2 \Z} \mathcal{P}_{N_2,m_2} v_{2,c_2} \bigr) w_{3,\pm,c_3}\Bigr),\\
& \# M_{N_2,m_1} \sim 1, \qquad \Bigl( \sum_{m_1 \in N_2 \Z} \Bigl\|\sum_{m_2 \in M_{N_2,m_1}} \mathcal{P}_{N_2,m_2} v_{2,c_2}\Bigr\|_{L_{t,x}^2}^2 \Bigr)^{\frac12} \sim \|v_{2,c_2}\|_{L_{t,x}^2}.
\end{align*}
As a result, in addition to \eqref{ass:prop1.3-support-04}, we may assume that there exists $\tau' \in \R$ such that
\[
\supp \widetilde{v}_{2,c_2}  \subset \{(\tau,k) \in \R \times \Z^d \, : \, |\tau - |k|^2 -c_2| \leq 1, \ |k| \sim N_2, \ |\tau-\tau'| \leq N_2\},
\]

Let $(\tau,k) \in \supp \widetilde{v}_{2,c_2}$. Then, $|\tau - |k|^2 -c_2| \leq 1$ and $|\tau-\tau'| \leq N_2$ give
\[
| |k|^2 -\tau' + c_2| \leq 2 N_2.
\]
This and $|k| \sim N_2$ mean that there exists $c_* \sim N_2$ such that $||k|-c_*| \lesssim 1$. 
Consequently, we may utilize Corollary~\ref{corollary:StrichartzShell} to estimate ${v}_{2,c_2}$ as
\begin{equation}\label{est:prop1.3-13}
\|v_{2,c_2}\|_{L_{t,x}^{\frac{2(d+1)}{d-1}}} \lesssim N_2^{\varepsilon} \| v_{2,c_2}\|_{L_{t,x}^2}.
\end{equation}
Let
\[
\Bigl(\frac{1}{q_3}, \frac{1}{p_3} \Bigr) = \Bigl(\frac{1}{d+1}, \, \frac{1}{2} - \frac{2}{(d-1)(d+1)}\Bigr).
\]
Then, $\frac{1}{q_3} = \frac{d-1}{2}(\frac{1}{2} - \frac{1}{p_3})$ holds. 
It follows from \eqref{est:prop1.3-13} and Corollary~\ref{corollary:StrichartzW} that
\begin{align*}
\| v_{2,c_2} w_{3,\pm,c_3} \|_{L_{t,x}^2} & \leq \|v_{2,c_2}\|_{L_{t,x}^{\frac{2(d+1)}{d-1}}} \| w_{3,\pm,c_3}\|_{L_{t,x}^{q_3}}\\
& \lesssim N_2^{\varepsilon}\|v_{2,c_2}\|_{L_{t,x}^2} N_2^{\frac{d}{2} - \frac{d}{d-1}} \| w_{3,\pm,c_3}\|_{L_t^{q_3} L_x^{p_3}}\\
& \lesssim N_1^{\frac{1}{d-1}} N_2^{\frac{d}{2} - \frac{d}{d-1}+\varepsilon} \|v_{2,c_2}\|_{L_{t,x}^2} 
\| w_{3,\pm,c_3}\|_{L_{t,x}^2},
\end{align*}
which completes the proof of \eqref{est:prop1.3-11}.
\end{proof}

Now we turn to the proof of Theorem \ref{theorem:main}. 
Since the proof is quite standard, we give only a rough sketch of it. 
We refer to \cite{GTV97}, \cite{Kishi13} for more details.

We define
\begin{align*}
\Phi_S(u,w) &= e^{it\Delta} u_0  + \frac{1}{2} \mathcal{J}_S[(w + \overline{w})u](t),\\
\Phi_{W}(u,w) &= e^{-it \langle \nabla \rangle} w_0 + \mathcal{J}_{W_{+}} \Bigl[\frac{\Delta}{\langle \nabla \rangle} (u \overline{u}) + \frac{1}{2 \langle \nabla \rangle} (w + \overline{w}) \Bigr](t).
\end{align*}
Our task is to show that if $s > s_0$, there exist $b>\frac12$ and $0<T<1$ such that
\begin{equation}
\label{mapping:Contraction}
\mathcal{M}(u,v) = (\Phi_S(u,w), \Phi_{W}(u,w))
\end{equation}
is a contraction mapping in the ball of suitable radius in $X^{s,b}_S(T) \times X^{s-\frac12,b}_{W_+}(T)$ centred at the origin.

We first introduce the well-known linear estimates. For the proof, we refer to \cite{GTV97}, Lemma 4.1 in \cite{Kishi13}.
\begin{lemma}
\label{lemma:X^{s,b}Linear}
Let $s, b \in \R$ and $0<T <1$. Then,
\begin{align*}
\|e^{it \Delta} u_0 \|_{X^{s,b}_S(T)} \lesssim \|u_0\|_{H^s(\T^d)},\\
\|e^{-it \langle \nabla \rangle} w_0 \|_{X^{s,b}_{W_+}(T)} \lesssim \|w_0\|_{H^s(\T^d)}.
\end{align*}
\end{lemma}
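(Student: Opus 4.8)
The plan is to prove both bounds by the standard device of multiplying the free evolution by a fixed time cut-off and computing its space--time Fourier transform explicitly. Since $0<T<1$, the function $\eta$ from the definition of the function spaces satisfies $\eta\equiv1$ on $[0,T]$, so $\eta(t)e^{it\Delta}u_0\in C([0,T);H^s(\T^d))$ coincides with $e^{it\Delta}u_0$ on $(0,T)$ and is therefore an admissible competitor in the infimum defining $\|\cdot\|_{X^{s,b}_S(T)}$. Consequently it suffices to prove the global estimates
\[
\|\eta(t)e^{it\Delta}u_0\|_{X^{s,b}_S}\lesssim\|u_0\|_{H^s(\T^d)},\qquad \|\eta(t)e^{-it\langle\nabla\rangle}w_0\|_{X^{s,b}_{W_+}}\lesssim\|w_0\|_{H^s(\T^d)},
\]
after which the restricted bounds follow immediately from the definition of $X(T)$.

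For the first estimate I would compute, with $\widehat\eta$ the Fourier transform on $\R$ and $\widehat{u_0}(k)$ the $k$-th Fourier coefficient,
\[
\mathcal{F}_{t,x}\big(\eta(t)e^{it\Delta}u_0\big)(\tau,k)=\widehat\eta(\tau+|k|^2)\,\widehat{u_0}(k),
\]
which matches the dispersion relation built into $Q^S_L$. By Plancherel in $t$ and the substitution $\sigma=\tau+|k|^2$, there is an absolute constant $c$ with
\[
\|P^S_{N,L}(\eta(t)e^{it\Delta}u_0)\|_{L^2_{t,x}}^2 = c\int_\R|\eta_L(\sigma)\widehat\eta(\sigma)|^2\,d\sigma\sum_{k\in\Z^d}|\eta_N(|k|)|^2|\widehat{u_0}(k)|^2 .
\]
Summing over dyadic $N$ and $L$ and separating the two variables gives
\[
\|\eta(t)e^{it\Delta}u_0\|_{X^{s,b}_S}^2 = c\Big(\sum_{L}L^{2b}\int_\R|\eta_L(\sigma)\widehat\eta(\sigma)|^2\,d\sigma\Big)\Big(\sum_{N}N^{2s}\sum_{k}|\eta_N(|k|)|^2|\widehat{u_0}(k)|^2\Big).
\]
Using $\sum_N N^{2s}|\eta_N(|k|)|^2\sim\langle k\rangle^{2s}$, the second factor is $\sim\|u_0\|_{H^s(\T^d)}^2$; using $\sum_L L^{2b}|\eta_L(\sigma)|^2\sim\langle\sigma\rangle^{2b}$, the first factor is $\sim\int_\R\langle\sigma\rangle^{2b}|\widehat\eta(\sigma)|^2\,d\sigma=\|\eta\|_{H^b(\R)}^2$, which is finite for every $b\in\R$ because $\eta\in\mathcal{S}(\R)$ and hence $\widehat\eta$ has rapid decay; in particular no restriction on $b$ is needed.

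The second estimate is proved in exactly the same way: one has $\mathcal{F}_{t,x}\big(\eta(t)e^{-it\langle\nabla\rangle}w_0\big)(\tau,k)=\widehat\eta(\tau+\langle k\rangle)\widehat{w_0}(k)$, so the computation above applies verbatim with $|k|^2$ replaced by $\langle k\rangle$ and $Q^S_L$ replaced by $Q^{W_+}_L$. I do not expect any genuine difficulty in this lemma; it is the classical transference of linear bounds into $X^{s,b}$-type spaces, and the only mild point to keep in mind is that the summation in $L$ converges precisely because it reconstructs the $H^b$-norm of the Schwartz cut-off $\eta$.
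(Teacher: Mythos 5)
The paper does not present a proof of this lemma; it simply points to \cite{GTV97} and \cite{Kishi13}, Lemma~4.1. Your proof is the standard textbook argument that those references use, and it is correct: extend by the smooth cut-off $\eta$ (which equals $1$ on $[-1,1]\supset[0,T]$, so the extension is admissible in the infimum defining $X(T)$), compute the space--time Fourier transform of $\eta(t)e^{it\Delta}u_0$ and $\eta(t)e^{-it\langle\nabla\rangle}w_0$, observe that after the substitution $\sigma=\tau+\Phi(k)$ the modulation variable decouples from the frequency variable, and reconstruct $\|\eta\|_{H^b(\R)}^2\cdot\|u_0\|_{H^s(\T^d)}^2$ from the two dyadic sums. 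The only small point worth flagging is that the factorisation step and the interchange $\sum_L\int$ rely on Tonelli (all terms nonnegative), and the equivalences $\sum_N N^{2s}|\eta_N(|k|)|^2\sim\langle k\rangle^{2s}$, $\sum_L L^{2b}|\eta_L(\sigma)|^2\sim\langle\sigma\rangle^{2b}$ rest on the fact that for each fixed argument only $O(1)$ terms of the Littlewood--Paley partition are nonzero — both are routine but are the implicit load-bearing facts. Since $\eta\in C_c^\infty(\R)\subset\mathcal{S}(\R)$, $\|\eta\|_{H^b(\R)}$ is finite for every $b\in\R$, so no sign or size restriction on $b$ is needed, exactly as the lemma asserts.
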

Next, we state the estimates for handling the Duhamel terms. We omit the proof. 
For the details, see e.g. Lemma 2.1 in \cite{GTV97} and Lemma 4.1 in \cite{Kishi13}.
\begin{lemma}
\label{lemma:X^{s,b}Duhamel}
Let $s \in \R$, $b>\frac12$, $0<T <1$, $\delta >0$ and $\psi \in C_0^{\infty}(\R)$ satisfy $\psi = 1$ on $[-1,1]$ and $\supp \psi \subset (-2 , 2)$. 
Define $\psi_T(t) = \psi(\frac{t}{T})$. Then,
\begin{align*}
\| \psi_T \mathcal{J}_S[F] \|_{X^{s,b}_S} & \lesssim T^{\delta} \| F\|_{X_S^{s,b-1+\delta}},\\
\| \psi_T \mathcal{J}_{W_{+}}[G] \|_{X^{s,b}_{W_{+}}} & \lesssim T^{\delta} \|  G\|_{X_{W_{+}}^{s,b-1+\delta}}.
\end{align*}
\end{lemma}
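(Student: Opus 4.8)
The plan is to prove only the bound for $\mathcal{J}_S$; the one for $\mathcal{J}_{W_{\pm}}$ is identical after replacing the symbol $|k|^2$ by $\pm\langle k\rangle$ and $e^{it\Delta}$ by $e^{\mp it\langle\nabla\rangle}$. \textbf{Step 1 (reduction to a scalar estimate in $t$).} First I would record that the dyadic norm defining $X_S^{s,b}$ is comparable to a continuous one: since $\{\eta_N(|\cdot|)\}$ and $\{\eta_L\}$ are partitions of unity, $\sum_N N^{2s}\eta_N(|k|)^2\sim\langle k\rangle^{2s}$ and $\sum_L L^{2b}\eta_L(\sigma)^2\sim\langle\sigma\rangle^{2b}$, whence
\begin{equation*}
\|u\|_{X_S^{s,b}}\sim\big\|\langle\nabla\rangle^s e^{-it\Delta}u\big\|_{H^b_t L^2_x}
\end{equation*}
with constants depending only on $\eta$. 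Since $\psi_T$ and $\langle\nabla\rangle^s$ factor through $e^{-it\Delta}$ and $e^{-it\Delta}\mathcal{J}_S[F](t)=-i\int_0^t e^{-it'\Delta}F(t')\,dt'$, setting $G:=\langle\nabla\rangle^s e^{-it\Delta}F$ (so that $\|G\|_{H^{b-1+\delta}_t L^2_x}\sim\|F\|_{X_S^{s,b-1+\delta}}$) reduces the claim to $\big\|\psi_T(t)\int_0^t G(s)\,ds\big\|_{H^b_t L^2_x}\lesssim T^\delta\|G\|_{H^{b-1+\delta}_t L^2_x}$, in which the spatial variable is a spectator. Thus it suffices to prove, for $0<T<1$, $\tfrac12<b<1$ (the only range needed in Proposition \ref{proposition:KeyBilinearEst}; for $b\ge1$ one interpolates between integer Sobolev spaces in $t$, with no new difficulty) and $0<\delta<1-b$, the scalar estimate $\big\|\psi_T(t)\int_0^t g(s)\,ds\big\|_{H^b_t(\R)}\lesssim T^\delta\|g\|_{H^{b-1+\delta}_t(\R)}$.

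\textbf{Step 2 (split by modulation; the high part).} Write $g=g_{\mathrm{lo}}+g_{\mathrm{hi}}$ with $\widehat{g_{\mathrm{hi}}}=\widehat g\,\mathbf{1}_{|\sigma|>1/T}$. Using $\int_0^t e^{is\sigma}\,ds=(e^{it\sigma}-1)/(i\sigma)$ one writes $\int_0^t g_{\mathrm{hi}}=\widetilde g_{\mathrm{hi}}(t)-\widetilde g_{\mathrm{hi}}(0)$ with $\widehat{\widetilde g_{\mathrm{hi}}}=\widehat{g_{\mathrm{hi}}}/(i\sigma)$. On $|\sigma|>1/T$ we have $\langle\sigma\rangle^{2b}\sigma^{-2}\sim\langle\sigma\rangle^{-2\delta}\langle\sigma\rangle^{2(b-1+\delta)}\le T^{2\delta}\langle\sigma\rangle^{2(b-1+\delta)}$, so $\|\widetilde g_{\mathrm{hi}}\|_{H^b}\lesssim T^\delta\|g\|_{H^{b-1+\delta}}$; and since $\widehat{\widetilde g_{\mathrm{hi}}}$ lives in $\{|\sigma|\ge1/T\}$ while $\widehat{\psi_T}$ concentrates on $\{|\sigma|\lesssim1/T\}$, a routine frequency-localisation argument gives $\|\psi_T\widetilde g_{\mathrm{hi}}\|_{H^b}\lesssim\|\widetilde g_{\mathrm{hi}}\|_{H^b}$. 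For the constant term, $|\widetilde g_{\mathrm{hi}}(0)|\le\|\widehat{g_{\mathrm{hi}}}/\sigma\|_{L^1}\lesssim T^{b+\delta-1/2}\|g\|_{H^{b-1+\delta}}$ by Cauchy--Schwarz (the $\sigma$-integral converges because $b+\delta>\tfrac12$), and $\|\psi_T\|_{H^b}\lesssim T^{1/2-b}$, so its contribution is again $\lesssim T^\delta\|g\|_{H^{b-1+\delta}}$.

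\textbf{Step 3 (the low part).} Taylor-expand: $\int_0^t g_{\mathrm{lo}}(s)\,ds=\sum_{n\ge1}\frac{i^{n-1}}{n!}t^n c_n$ with $c_n:=\int_{|\sigma|\le1/T}\sigma^{n-1}\widehat g(\sigma)\,d\sigma$. Cauchy--Schwarz gives $|c_n|\lesssim\big\||\sigma|^{n-1}\langle\sigma\rangle^{-(b-1+\delta)}\mathbf{1}_{|\sigma|\le1/T}\big\|_{L^2}\|g\|_{H^{b-1+\delta}}\lesssim T^{b+\delta-1/2-n}\|g\|_{H^{b-1+\delta}}$, while from the obvious $L^2_t$ and $H^1_t$ bounds on $t^n\psi_T$ and interpolation $\|t^n\psi_T\|_{H^b}\lesssim n^b C^n T^{n+1/2-b}$. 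Hence the $n$-th term contributes $\lesssim\frac{n^b C^n}{n!}T^\delta\|g\|_{H^{b-1+\delta}}$, and $\sum_{n\ge1}n^bC^n/n!<\infty$; adding Steps 2 and 3 closes the scalar estimate and with it the lemma.

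\textbf{Main obstacle.} The argument is routine Bourgain-space bookkeeping; the only genuinely delicate point is extracting the \emph{uniform} gain $T^\delta$, i.e. keeping the three competing powers of $T$ in balance — the $T^{1/2-b}$ loss from multiplying by $\psi_T$ in $H^b$, the gain $|\sigma|^{-1}\lesssim T$ enjoyed off modulation $1/T$, and the $T$-powers generated by the Taylor tails — so that they always recombine to $T^{+\delta}$ with series constants that remain summable. It is also here that the hypothesis $b>\tfrac12$ enters decisively (equivalently $b-1+\delta>-\tfrac12$, which is exactly what makes every $\sigma$-integral above converge), and that restricting to $b<1$ lets one write $H^b_t=[L^2_t,H^1_t]_b$. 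Compare the analogous energy estimates in \cite{GTV97} and \cite{Kishi13}.
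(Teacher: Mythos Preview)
Your argument is correct and is precisely the standard Ginibre--Tsutsumi--Velo proof that the paper invokes; the paper itself omits the proof entirely, writing only ``We omit the proof. For the details, see e.g.\ Lemma 2.1 in \cite{GTV97} and Lemma 4.1 in \cite{Kishi13}.'' Your restriction to $\tfrac12<b<1$, $0<\delta<1-b$ is appropriate (and is the only range used downstream), and the one step you label ``routine'' --- that $\|\psi_T\widetilde g_{\mathrm{hi}}\|_{H^b}\lesssim\|\widetilde g_{\mathrm{hi}}\|_{H^b}$ uniformly in $T$ thanks to the frequency support $|\sigma|\ge 1/T$ --- does go through after splitting the convolution into near-diagonal ($|\tau-\sigma|\le|\sigma|/2$, where $\langle\tau\rangle\sim\langle\sigma\rangle$) and off-diagonal ($|\tau-\sigma|>|\sigma|/2\ge 1/(2T)$, where the Schwartz decay of $\widehat\psi$ absorbs the weight) contributions.
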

By combining Proposition \ref{proposition:KeyBilinearEst} and Lemma \ref{lemma:X^{s,b}Duhamel}, we obtain the following:
\begin{lemma}
\label{lemma:X^{s,b}Nonlinear}
Let $s > s_0$ and $0<T<1$. Then there exists $b >\frac12$ and $\delta>0$ such that
\begin{align}
\label{est:X^{s,b}NonlinearS}
& \|\mathcal{J}_S[(w + \overline{w})u] \|_{X^{s,b}_S(T)} 
\lesssim T^{\delta} \|u \|_{X_S^{s,b}(T)} \|w\|_{X_{W_{+}}^{s-\frac12,b}(T)},\\
\label{est:X^{s,b}NonlinearW}
& \Bigl\|\mathcal{J}_{W_{+}} \Bigl[\frac{\Delta}{\langle \nabla \rangle} (u \overline{u})  \Bigr] \Bigr\|_{X^{s,b}_{W_{+}}(T)} \lesssim T^{\delta} \|u\|_{X_S^{s,b}(T)}^2.
\end{align}
\end{lemma}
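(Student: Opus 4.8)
The plan is to obtain Lemma~\ref{lemma:X^{s,b}Nonlinear} by feeding the bilinear estimates of Proposition~\ref{proposition:KeyBilinearEst} into the Duhamel estimates of Lemma~\ref{lemma:X^{s,b}Duhamel}, combined with the standard passage between global $X^{s,b}$-norms and the time-localised norms $X(T)$; no further analytic input is needed, the genuinely hard work (where the shell decoupling enters) having already been carried out in Proposition~\ref{proposition:KeyBilinearEst}.

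First I fix, once and for all, exponents $b>\frac12$ and $\delta>0$ as furnished by Proposition~\ref{proposition:KeyBilinearEst}; shrinking $\delta$ if necessary we may assume $0<\delta<b-\frac12$, which is all that Lemma~\ref{lemma:X^{s,b}Duhamel} requires. Given $u\in X^{s,b}_S(T)$ and $w\in X^{s-\frac12,b}_{W_+}(T)$, I choose extensions $U\in X^{s,b}_S$ and $W\in X^{s-\frac12,b}_{W_+}$ with $U=u$, $W=w$ on $(0,T)$ and $\|U\|_{X^{s,b}_S}\le 2\|u\|_{X^{s,b}_S(T)}$, $\|W\|_{X^{s-\frac12,b}_{W_+}}\le 2\|w\|_{X^{s-\frac12,b}_{W_+}(T)}$. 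Since $T<1$ the cutoff $\psi_T$ of Lemma~\ref{lemma:X^{s,b}Duhamel} equals $1$ on $[-T,T]$, and since $\mathcal{J}_S[F](t)$ depends only on the restriction of $F$ to $(0,t)$, the function $\psi_T\,\mathcal{J}_S[(W+\overline{W})U]$ agrees on $(0,T)$ with $\mathcal{J}_S[(w+\overline{w})u]$; hence it is an admissible competitor in the infimum defining $\|\mathcal{J}_S[(w+\overline{w})u]\|_{X^{s,b}_S(T)}$. Applying Lemma~\ref{lemma:X^{s,b}Duhamel}, then the triangle inequality, and then the two estimates making up \eqref{est:prop1.3-1} (used once with $UW$ and once with $U\overline{W}$), I obtain
\[
\|\mathcal{J}_S[(w+\overline{w})u]\|_{X^{s,b}_S(T)}
\le \|\psi_T\mathcal{J}_S[(W+\overline{W})U]\|_{X^{s,b}_S}
\lesssim T^{\delta}\,\|(W+\overline{W})U\|_{X^{s,b-1+\delta}_S}
\lesssim T^{\delta}\,\|U\|_{X^{s,b}_S}\|W\|_{X^{s-\frac12,b}_{W_+}},
\]
and the right-hand side is $\le 4\,T^{\delta}\|u\|_{X^{s,b}_S(T)}\|w\|_{X^{s-\frac12,b}_{W_+}(T)}$, which is \eqref{est:X^{s,b}NonlinearS}. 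The estimate \eqref{est:X^{s,b}NonlinearW} is proved the same way with $\mathcal{J}_{W_+}$ replacing $\mathcal{J}_S$: after reducing to $\|\psi_T\mathcal{J}_{W_+}[\tfrac{\Delta}{\langle\nabla\rangle}(U\overline{U})]\|$ and invoking Lemma~\ref{lemma:X^{s,b}Duhamel}, I use that the Fourier multiplier $\tfrac{\Delta}{\langle\nabla\rangle}$ has symbol $-|k|^2/\langle k\rangle$, of size $\lesssim\langle k\rangle$, so it maps $X^{\sigma+1,\beta}_{W_+}$ boundedly into $X^{\sigma,\beta}_{W_+}$; together with \eqref{est:prop1.3-2} this gives $\|\tfrac{\Delta}{\langle\nabla\rangle}(U\overline{U})\|_{X^{s-\frac12,b-1+\delta}_{W_+}}\lesssim\|U\overline{U}\|_{X^{s+\frac12,b-1+\delta}_{W_+}}\lesssim\|U\|_{X^{s,b}_S}^2$, so the Duhamel term is controlled in the wave component space of regularity $s-\frac12$ that appears in the iteration scheme, as needed.

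I do not expect a real obstacle here: the only points requiring care are bookkeeping, namely (i) checking that the $\psi_T$-truncated Duhamel terms are legitimate extensions so that the restricted-norm infimum argument is valid, and (ii) matching the Sobolev indices correctly, in particular tracking the order-one loss coming from $\Delta\langle\nabla\rangle^{-1}$ against the $+\frac12$ derivative gain built into \eqref{est:prop1.3-2}. All nontrivial harmonic analysis has been absorbed into Proposition~\ref{proposition:KeyBilinearEst}, so the present lemma is purely a structural consequence.
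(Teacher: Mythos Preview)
Your proposal is correct and follows essentially the same approach as the paper: pick near-optimal extensions $U,W$, use $\psi_T\mathcal{J}_S[(W+\overline{W})U]$ as a competitor in the restricted-norm infimum, apply Lemma~\ref{lemma:X^{s,b}Duhamel} and then Proposition~\ref{proposition:KeyBilinearEst}, and handle the $\Delta\langle\nabla\rangle^{-1}$ multiplier by the obvious one-derivative bound. The only point the paper makes slightly more explicit than you do is the continuity check: since membership in $X(T)$ requires $C([0,T);H^s)$, the paper invokes Lemma~2.2 of \cite{GTV97} to verify that $\mathcal{J}_S[(W+\overline{W})U]\in C(\R;H^s)$ before concluding that your competitor is legitimate---this is exactly the bookkeeping point~(i) you flag, and it goes through without difficulty.
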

\begin{proof}
It follows from the definitions of $X_S^{s,b}(T)$ and $X_{W_{+}}^{s-\frac12,b}(T)$ that for $(u, w) \in X^{s,b}_S(T) \times X_{W_{+}}^{s-\frac12,b}(T)$, there exists $(U, W) \in X^{s,b}_S \times X_{W_{+}}^{s-\frac12,b}$ such that
\begin{align}
\label{est:uw=UW}
& (u(t), w(t)) = (U(t), W(t)) \quad \mathrm{if} \ 0 \leq t  < T,\\
\notag
& \|U\|_{X^{s,b}_S} \leq 2 \|u\|_{X^{s,b}_S(T)}, \ 
\|W\|_{X_{W_{+}}^{s-\frac12,b}} \leq 2 \|w\|_{X_{W_{+}}^{s-\frac12,b}(T)}.
\end{align}
We choose $b> \frac12$ and $\delta >0$ so that \eqref{est:prop1.3-1} and \eqref{est:prop1.3-2} hold. Then,
\begin{align}
\label{est:Bilinear_UW}
& \|U W\|_{X_S^{s,b-1+ \delta}} + \|U \overline{W}\|_{X_S^{s,b-1+\delta}} \lesssim \|U \|_{X_S^{s,b}} \|W\|_{X_{W_{+}}^{s-\frac12,b}},\\
\label{est:Bilinear_U^2}
& \|U \overline{U}\|_{X_{W_{+}}^{s+\frac12,b-1 + \delta}} \lesssim \|U\|_{X_S^{s,b}}^2.
\end{align}
Lemma 2.2 in \cite{GTV97} tells that \eqref{est:Bilinear_UW} and \eqref{est:Bilinear_U^2} imply 
\[
\mathcal{J}_S[(W + \overline{W})U]  \in C(\R; H^s(\T^d)), \quad 
\mathcal{J}_{W_{+}} \Bigl[\frac{\Delta}{\langle \nabla \rangle} (U \overline{U})  \Bigr] \in C(\R; H^s(\T^d)).
\]
Therefore, \eqref{est:uw=UW} tells that if $ 0 \leq t  < T$, it holds that
\[
\mathcal{J}_S[(w + \overline{w})u](t)  = \mathcal{J}_S[(W + \overline{W})U] (t),  \quad \mathcal{J}_{W_{+}} \Bigl[\frac{\Delta}{\langle \nabla \rangle} (u \overline{u})  \Bigr](t) = 
\mathcal{J}_{W_{+}} \Bigl[\frac{\Delta}{\langle \nabla \rangle} (U \overline{U})  \Bigr](t),
\]
and in particular 
\[
\mathcal{J}_S[(w + \overline{w})u] \in X^{s,b}_S(T), \quad 
\mathcal{J}_{W_{+}} \Bigl[\frac{\Delta}{\langle \nabla \rangle} (u \overline{u})  \Bigr] \in X_{W_{+}}^{s-\frac12,b}(T).
\]
Consequently, by using Lemma \ref{lemma:X^{s,b}Duhamel}, we see that
\begin{align*}
\|\mathcal{J}_S[(w + \overline{w})u] \|_{X^{s,b}_S(T)} 
& \leq
\| \psi_T \mathcal{J}_S[(W + \overline{W})U] \|_{X^{s,b}_S}\\
& \lesssim T^{\delta} \| U W + U \overline{W}\|_{X_S^{s,b-1+\delta}}\\
& \lesssim T^{\delta} \|U \|_{X_S^{s,b}} \|W\|_{X_{W_{+}}^{s-\frac12,b}} \\
& \lesssim T^{\delta} \|u \|_{X_S^{s,b}(T)} \|w\|_{X_{W_{+}}^{s-\frac12,b}(T)}.
\end{align*}
This completes the proof of \eqref{est:X^{s,b}NonlinearS}. 
\eqref{est:X^{s,b}NonlinearW} can be proved similarly.
\end{proof}
Now, by applying Lemmas \ref{lemma:X^{s,b}Linear} and \ref{lemma:X^{s,b}Nonlinear} with a suitable exponents $T$, $\delta$, we verify that \eqref{mapping:Contraction} is a contraction mapping in some ball of suitable 
radius in $X^{s,b}_S(T) \times X^{s-\frac12,b}_{W_+}(T)$ centred at the origin.\footnote{Notice that the term $\mathcal{J}_{W_{+}} [ \frac{1}{2 \langle \nabla \rangle} (w + \overline{w}) ]$ is easily handled.} We omit the details.

\section{Proof of Theorem \ref{theorem:NotC^2}}
\label{section:ProofNegativeResult}
In this section, we prove Theorem \ref{theorem:NotC^2}.
\begin{proof}[Proof of Theorem \ref{theorem:NotC^2}]
We follow the argument introduced by Bourgain \cite{Bou97}. See Section 6 in \cite{Bou97}. We also refer to \cite{Hol07} and \cite{Kishi13}. 

We only need to prove that for arbitrarily large $C \gg 1$ and any $T>0$, there exist $f$, $g \in C^{\infty}(\T^d)$ such that
\begin{equation}\label{goal:NotC^2}
\begin{split}
\sup_{0\leq t \leq T} & 
\Bigl\|\int_0^t e^{i(t-t')\Delta} \bigl( (e^{it' \Delta}f) (\cos (t'|\nabla|) g) \bigr) d t' \Bigr\|_{H^s(\T^d)} \\
& \geq C \|f\|_{H^s(\T^d)} \|g\|_{H^{s-\frac12}(\T^d)}.
\end{split}
\end{equation}
Let $N \gg 1$ and 
\[
f_N = N^{-s - \frac{d}{2}}\sum_{|k| \sim N} e^{i k \cdot x}, \quad 
g_N = N^{-(s - \frac12) - \frac{d}{2}} \sum_{|k| \sim N} \cos(k \cdot x).
\]
Then, it holds that $\|f_N \|_{H^s(\T^d)} \sim \|g_N \|_{H^{s-\frac12}(\T^d)} \sim 1$. 
We observe that
\begin{align*}
& \F_{x}\Bigl[ 
e^{i(t-t')\Delta} \bigl( (e^{it' \Delta}f) (\cos (t'|\nabla|) g) \bigr)
\Big](k)\\
& \sim  N^{- 2 s -d + \frac12}e^{-i(t-t')|k|^2} \sum_{\substack{|k'| \sim N \\ |k-k'| \sim N}}e^{-it'|k-k'|^2} \cos (t'|k'|).
\end{align*}
If $0\leq t' \leq t \leq \frac{1}{100 N^2}$ for any $|k| \sim |k'| \sim N$, we have
\[
\Re \bigl[ e^{-i(t-t')|k|^2} e^{-it'|k-k'|^2} \cos (t'|k'|) \bigr] \geq \frac12.
\]
Consequently, if $0 <t \leq \frac{1}{100 N^2}$, we obtain
\begin{align*}
& \Bigl\| \int_0^t e^{i(t-t')\Delta} \bigl( (e^{it' \Delta}f) (\cos (t'|\nabla|) g) \bigr) \Bigr\|_{H^s(\T^d)}^2\\
& \gtrsim \sum_{|k| \sim N} t^2 N^{2 s} \Bigl| \F_{x}\Bigl[ 
e^{i(t-t')\Delta} \bigl( (e^{it' \Delta}f) (\cos (t'|\nabla|) g) \bigr)
\Big] \Bigr|^2\\
& \gtrsim t^2 N^{- 2 s -2d + 1} \sum_{|k| \sim N}\Bigl( \sum_{\substack{|k'| \sim N \\ |k-k'| \sim N}} 1 \Bigr)^2 
\sim t^2 N^{- 2 s +d + 1}.
\end{align*}
For arbitrarily $C \gg 1$ and $T>0$, by choosing $N$ so that $N^{-s+ \frac{d-3}{2}} \gg C$ and $\frac{1}{100 N^2} \leq T$, we observe that 
\[
\sup_{0 \leq t \leq T}\Bigl\| \int_0^t e^{i(t-t')\Delta} \bigl( (e^{it' \Delta}f) (\cos (t'|\nabla|) g) \bigr) \Bigr\|_{H^s(\T^d)} 
\gtrsim N^{-s + \frac{d}{2}  -\frac{3}{2}},
\]
which implies \eqref{goal:NotC^2}. 
\end{proof}

\section{Appendix}
\subsection{A bonus on the number of integer solutions to certain Diophantine system}
It is nowadays standard that once one can prove decoupling type inequality for some submanifold and even integer $p$, then one can  derive a bound of integer solutions of a Diophantine system corresponding to the submanifold, see \cite{BDG,Dem,GLYZ21} and references therein. 
For example the celebrated resolution of Vinogradov's meanvalue theorem (conjecture) by Bourgain--Demeter--Guth \cite{BDG} took this route in which case the submanifold was the moment curve in $\mathbb{R}^n$.  
Let us consider a Diophantine system 
\begin{equation}\label{e:Dio}
\begin{cases}
x_1+y_1=z_1+w_1,\\
x_2+y_2=z_2+w_2,\\
x_3+y_3=z_3+w_3,\\
x_1^2+x_2^2+x_3^2
+
y_1^2+y_2^2+y_3^2
=
z_1^2+z_2^2+z_3^2
+
w_1^2+w_2^2+w_3^2
\end{cases}
\end{equation}
for variables $(x_1,x_2,x_3,y_1,y_2,y_3,z_1,z_2,z_3,w_1,w_2,w_3)$ and denote the  number of integer solutions in $[1,N]^{12}$ of \eqref{e:Dio} for fixed large $N$ by $\sharp_N$.  By taking the trivial solutions $(x_1, x_2, x_3) = (z_1, z_2, z_3)$ and $(y_1, y_2, y_3) = (w_1, w_2, w_3)$ one can see that $\sharp_N\gtrsim N^6$. 
Then the standard argument reveals that 
$$
\sharp_N
=
c
\| e^{it\Delta} f \|_{L^4_{t,x}(\mathbb{T}^{3+1})}^4
$$
for $\hat{f}(k) = \1_{[1,N]^{3}}(k)$. 
On the other hand, Bourgain--Demeter's $\ell^2$-decoupling ensures bounds for $\| e^{it\Delta} f \|_{L^p_{t,x}(\mathbb{T}^{3+1})}$ with $2\le p\le\frac{10}{3}$.
It is worth to mention that there is no even integer except $p=2$ in the range $2\le p\le\frac{10}{3}$ and hence the standard argument is not applicable to give a bound of $\sharp_N$ by $N^{6+\varepsilon}$.
On the hand Theorem \ref{theorem:StrichartzShell} is applicable for $p=4$ under the shell type constraint. 
Namely, letting $x=(x_1,x_2,x_3)$, $y=(y_1,y_2,y_3)$, $z=(z_1,z_2,z_3)$, $w=(w_1,w_2,w_3)$, we have the following: 
\begin{corollary}
Let $N\gg1$. Then the number of integer solutions to \eqref{e:Dio} satisfying $N - 10 \le |x|,|y|,|z|,|w| \le N$ is bounded by $C_\varepsilon N^{4+\varepsilon}$ for arbitrary small $\varepsilon>0$. 
\end{corollary}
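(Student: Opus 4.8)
The plan is to reduce the counting problem to an $L^4$ Strichartz estimate on $\mathbb{T}^{3+1}$ under a shell constraint and then apply Theorem~\ref{theorem:StrichartzShell} with $d=3$, for which $\frac{2(d+1)}{d-1}=4$ is precisely the endpoint exponent.

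First I would set up the standard arithmetic--analytic dictionary. Let $f$ be the function on $\mathbb{T}^3$ with $\widehat{f}(k)=\mathbf{1}_{\{N-10\le|k|\le N\}}(k)$. Expanding $|e^{it\Delta}f|^2=\sum_{a,b}\widehat{f}(a)\overline{\widehat{f}(b)}\,e^{i((a-b)\cdot x+(|a|^2-|b|^2)t)}$ and applying Plancherel on $\mathbb{T}^{3+1}$, one sees that $\|e^{it\Delta}f\|_{L^4_{t,x}(\mathbb{T}^{3+1})}^4$ equals, up to an absolute constant, the number of quadruples $(a,b,c,d)$ of lattice points in the shell $\{N-10\le|k|\le N\}$ satisfying $a-b=c-d$ and $|a|^2-|b|^2=|c|^2-|d|^2$. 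Rewriting these as $a+d=b+c$, $|a|^2+|d|^2=|b|^2+|c|^2$ and relabelling the variables componentwise, this is exactly the number of integer solutions of \eqref{e:Dio} with $N-10\le|x|,|y|,|z|,|w|\le N$; call it $\sharp_N^{\mathrm{shell}}$.

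Next I would estimate $\|e^{it\Delta}f\|_{L^4_{t,x}(\mathbb{T}^{3+1})}$. Since the shell $\{N-10\le|k|\le N\}$ is a union of $O(1)$ shells of the form $\{c_*-1\le|k|\le c_*+1\}$ with $c_*\sim N$, one writes $f=\sum_j f_j$ accordingly and uses the triangle inequality together with Theorem~\ref{theorem:StrichartzShell} (applied with $N_1=N_2\sim N$, $p=4$, and $B_{N_2}$ any ball containing the shell) to get $\|e^{it\Delta}f\|_{L^4_{t,x}(\mathbb{T}^{3+1})}\lesssim_\varepsilon N^{\varepsilon}\|f\|_{L^2(\mathbb{T}^3)}$. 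It then remains to note that $\|f\|_{L^2(\mathbb{T}^3)}^2=\#\{k\in\mathbb{Z}^3:N-10\le|k|\le N\}\lesssim N^2$, because this set lies in an $O(1)$-neighbourhood of a sphere of radius $\sim N$ and is therefore covered by $O(N^2)$ unit cubes, each carrying $O(1)$ lattice points. Combining everything, $\sharp_N^{\mathrm{shell}}\sim\|e^{it\Delta}f\|_{L^4_{t,x}(\mathbb{T}^{3+1})}^4\lesssim_\varepsilon N^{4\varepsilon}(N^2)^2=N^{4+4\varepsilon}$, and relabelling $\varepsilon$ concludes the proof.

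There is no genuine obstacle here; the content of the statement lies entirely in \emph{why} the shell refinement is needed. The naive route through the $\ell^2$-decoupling inequality Theorem~\ref{t:BD15}, equivalently Theorem~\ref{theorem:StrichartzS-BD}, only controls $\|e^{it\Delta}f\|_{L^p_{t,x}(\mathbb{T}^{3+1})}$ for $2\le p\le\frac{2(d+2)}{d}=\frac{10}{3}$, and $4$ is not an even integer in that range, so it does not produce an $N^{4+\varepsilon}$ bound for the unrestricted count $\sharp_N$. It is exactly the gain of range up to $\frac{2(d+1)}{d-1}=4$ under the shell constraint, furnished by Theorem~\ref{theorem:StrichartzShell}, that makes the even exponent $p=4$ available and hence the counting bound possible; taking the trivial solutions $(x,z)=(y,w)$ shows it is sharp up to the $\varepsilon$-loss.
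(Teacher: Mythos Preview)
Your proposal is correct and follows exactly the route the paper intends: the standard $L^4$-to-counting identity together with Theorem~\ref{theorem:StrichartzShell} at $d=3$, $p=4$. The paper does not spell out a proof beyond this, so your write-up is in fact more detailed; the only slip is in the last line, where the trivial solutions witnessing sharpness are $x=z$, $y=w$ (giving $\sim N^4$ solutions), not ``$(x,z)=(y,w)$''.
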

Here notice that the number of the trivial solutions $x=z$ and $y=w$ under the constraint $N - 10 \le |x|,|y|,|z|,|w| \le N$ is comparable to $N^4$.

\subsection{A remark on the mixed-norm Strichartz estimates}\label{S6.2}
As is mentioned in the Introduction, Theorem~\ref{theorem:main} is sharp up to $\varepsilon$ loss when $d=3$ and $d \geq 5$. 
In the $d=4$ case, however, there might be a room to relax the regularity condition \eqref{assumption:regularity}. 
We here discuss a possible route to this problem by assuming the validity of the mixed norm Strichartz estimates on the torus. 
Namely we suppose the endpoint estimate 
\begin{equation}
\label{est:MixedStrichartz}
    \|e^{it \Delta} \phi\|_{L_t^2 L_x^{\frac{2d}{d-2}}(\T^{d+1})} \lesssim N^{\varepsilon}\|\phi\|_{L_x^2(\T^d)},
\end{equation}
holds for all $\varepsilon>0$ and ${\rm supp}\, \hat{\phi} \subset \{ k \in \mathbb{Z}^d: |k| \le N \} $, and then 
see how it implies the sharp well-posedness result up to $\varepsilon$ loss. 
For simplicity, let us assume $d=4$ and consider only the trilinear estimate \eqref{e:Trilinear}. 
A simple use of \eqref{est:MixedStrichartz} with $d=4$ implies
\begin{align*}
& \Big|
\int_{[- \pi, \pi] \times \mathbb{T}^4} e^{ it\Delta } \phi_1 \overline{e^{it\Delta}\phi_2} e^{\pm i t |\nabla|} \phi_3\, dtdx
\Big|\\
& \leq 
\|e^{ it\Delta } \phi_1 \|_{L_t^2 L_x^4} \|e^{ it\Delta } \phi_2 \|_{L_t^2 L_x^4} \|e^{\pm i t |\nabla|} \phi_3 \|_{L_t^{\infty} L_x^2}\\
& \lesssim 
N^{\varepsilon} 
\prod_{j=1}^3 \| \phi_j \|_{L^2(\mathbb{T}^d)},
\end{align*}
for ${\rm supp}\, \hat{\phi_j} \subset \{ |\xi| \sim N\} \cap \mathbb{Z}^d$. 
By employing this almost sharp estimate and the argument in this paper, we may show the local well-posedness of \eqref{Zakharov} with $d=4$ in $\mathcal{H}^{s, s-\frac12}$ if $s>\frac12$, which is an optimal result up to $\varepsilon$ loss. 
Hence, the mixed-norm Strichartz estimate \eqref{est:MixedStrichartz} or in general
\begin{equation}
\label{est:MixedStrichartzGene}
    \|e^{it \Delta} \phi\|_{L_t^q L_x^p(\T^{d+1})} \lesssim N^{\varepsilon}\|\phi\|_{L_x^2(\T^d)},
\end{equation}
for $\frac2q = d(\frac12 - \frac1p)$ with $d\ge3$, is valuable  problem from PDE point of view. 
Notice that, by an interpolation of \eqref{e:StrichartzS} and the trivial estimate $\|e^{it \Delta} \phi\|_{L_t^{\infty} L_x^2(\T^{d+1})} = \|\phi\|_{L_x^2(\T^d)}$, we have \eqref{est:MixedStrichartzGene} for $\frac2q = d(\frac12 - \frac1p)$ and $q \geq p$, and hence the case of $q< p$ is a main problem.
As far as we know, there is no sharp result for \eqref{est:MixedStrichartzGene} when $q<p$. In this direction, we mention the work due to Burq--G\'{e}rard--Tzvetkov \cite{BGT} where they established the mixed norm Strichartz estimates with a certain loss of regularity. 
After we uploaded this paper on arXiv, Dasu--Jung--Li--Madrid \cite{DJLM} announced that the mixed-norm $\ell^2$-decoupling inequality which implies \eqref{est:MixedStrichartzGene} is prohibited to go beyond Bourgain--Demeter's (pure-norm) decoupling inequality on the Strichartz line $\frac2q = d(\frac12 - \frac1p)$. In particular, one cannot expect the mixed-norm $\ell^2$-decoupling inequality which implies \eqref{est:MixedStrichartz}. As is mentioned in \cite{DJLM}, their counterexamples are not the one for the mixed norm Strichartz inequality \eqref{est:MixedStrichartzGene} and so the problem is still open.

\section*{Acknowledgments}
{This work was supported by Grant-in-Aid for JSPS Kakenhi grant numbers 21J00514, 22KJ0446 (Kinoshita), 19K03546, 19H01796 and 21K13806 (Nakamura). 
The second author is grateful to Sanghyuk Lee for his helpful advice on the decoupling theory.  
The authors would like to thank Mamoru Okamoto for his comment about the Strichartz estimate for the wave equation, and Sebastian Herr for suggesting the problem.
The authors also express their gratitude to Changkeun Oh for his comment on the work due to Guo--Zorin-Kranich.
Finally, the authors are grateful to the anonymous referees for their valuable suggestions which helped improve the article.}

\section*{Conflict of Interest}
On behalf of all authors, the corresponding author states that there is no conflict of interest. 

\section*{Data Availability Statement}
Data sharing not applicable to this article as no datasets were generated or analysed during the current study.

\end{document}